\newtheorem{theorem}{Theorem}[section]
\newtheorem{corollary}[theorem]{Corollary}
\newtheorem{definition}[theorem]{Definition}
\newtheorem{lemma}[theorem]{Lemma}
\newtheorem{proposition}[theorem]{Proposition}
\newtheorem{remark}[theorem]{Remark}
\newenvironment{proof}[1][Proof]{\textbf{#1.} }{\ \rule{0.5em}{0.5em}}
\newcommand{\refeqn}[1]{(\ref{#1})}
\newcommand{\cinf}[0]{C^{\infty}}
\newcommand{\matr}[0]{\operatorname{Mat}}
\newcommand{\acc}[1]{\`{#1}}
\begin{document}

\title{{\bf Random transformations and invariance of semimartingales on Lie groups}}
\author{Sergio Albeverio\thanks{Institut f\"ur Angewandte Mathematik and HCM, Rheinische Friedrich-Wilhelms-Universit\"at Bonn, Endenicher Allee 60, Bonn, Germany, \emph{email: albeverio@uni-bonn.de}}, Francesco C. De Vecchi\thanks{Institut f\"ur Angewandte Mathematik and HCM, Rheinische Friedrich-Wilhelms-Universit\"at Bonn, Endenicher Allee 60, Bonn, Germany, \emph{email: francesco.devecchi@uni-bonn.de}}, Paola Morando\thanks{DISAA, Universit\`a degli Studi di Milano, via Celoria 2, Milano, Italy, \emph{email: paola.morando@unimi.it}} and Stefania Ugolini\thanks{Dipartimento di Matematica, Universit\`a degli Studi di Milano, via Saldini 50, Milano, Italy, \emph{email: stefania.ugolini@unimi.it}}}

\maketitle

\begin{abstract}
Invariance properties of semimartingales on Lie groups under a family of random transformations are defined and investigated, generalizing the random rotations of the Brownian motion. A necessary and sufficient explicit condition characterizing semimartingales with this kind of invariance
is given in terms of their stochastic characteristics. Non trivial examples of symmetric semimartingales are provided and applications of this concept to stochastic analysis are discussed.
\end{abstract}

\noindent\textbf{MSC numbers}: 60H10; 60G45

\noindent\textbf{Keywords}: semimartingale with jumps on Lie groups, stochastic processes on manifolds, invariance with respect to random transformations, stochastic characteristics of semimartingales.

\section{Introduction}

For deterministic differential equations the study of transformations acting on the underlying time and space variables and the induced transformations in the space of solutions is a well-developed subject of research.
This study also includes themes like symmetries and invariance properties for the solutions.

In the 19th century, particularly since the inception of the concept of group (Abel, Galois) and in particular of continuous groups of transformations (Lie, Klein) the study of transformations of differential equations has lead to a reduction theory, used to bring general equations to simpler forms.
Moreover, the finding of invariants under the group of transformations permits to reduce the number of relevant variables and even in certain cases to arrive at concrete solutions.
This is successfully exploited in connection with classical mechanics and dynamical systems, described by systems of ordinary differential equations, see, e.\,g., \cite{Gaeta1994,Olver1993,Stephani1989}.

For the case where the state space for the solution process is a manifold, or more specifically a Lie group (like, e.\,g., in the classical problem of the motion of a top), geometric mechanics provides a natural setting, see, e.\,g., \cite{Holm2009}.
Variational principles and the well known relation between symmetries and invariants (Noether's theorem) provide other important methods for the study of such deterministic systems.

A corresponding theory for equations with random terms is still much less developed, despite the well-developed theory of (It\^o or Stratonovich type) stochastic differential equations.
Recently, however, an increasing interest has been given to this topic, and to the study of stochastic processes and their transformation and symmetry properties, see, e.\,g., \cite{Albeverio2017,Albeverio1998,Cicogna1999,DeVecchithesis,DMU2,DMU1,Gaeta2018,Kallenberg2005,Cami2009,Misawa1994}.

For the case of processes with values in $\mathbb{R}^n$ and corresponding to the stochastic equations, the study of transformations and symmetries in analogy with the deterministic theory can be found, e.g., in \cite{DeVecchithesis,Liao1992,Liao2016} for stochastic equation with driving Brownian motion and, e.g., in \cite{Applebaum2004} for more general driving L\'evy processes.
For the case where the processes take values in Lie groups and symmetric spaces see, e.\,g., \cite{Applebaum2000,Applebaum2015,Feinsilver1978,Gangolli1964,Hunt1956,Liao2004,Liao2014}.

More generally Markov processes and their symmetries have been studied in a number of papers, see, e.\,g., \cite{DMU2,DMU1,Glover1991,Glover1990,Liao1992}.
Let us also mention that symmetry with respect to deterministic group of permutations has been exploited in de Finetti characterization of exchangeable processes, while processes which are symmetric with respect to groups of random transformations have been studied in \cite{Kallenberg2005} and references therein.\\

In the probabilistic setting there is a special interest in looking for transformations that transform both the coefficients and the underlying noise. Especially on Lie groups it seems natural to consider the general setting of semimartingales, thus extending work on some special processes (e.\,g., Brownian motions and their subordinates on Lie groups).
This is the subject of the present paper, where we introduce a new concept of invariance for semimartingales defined on Lie groups which we call \emph{gauge symmetry}. The prototypical example of gauge symmetry is the well known invariance of Brownian motion with respect to random rotations. More precisely, if $W$ is an $n$ dimensional Brownian motion and $\bold{B}:\Omega \times \mathbb{R}_+ \rightarrow O(n)$ is a predictable stochastic process taking values in the Lie group $O(n)$ of orthogonal $n \times n$ matrices, by the L\'evy characterization of Brownian motion, we have that $W'_t=\int_0^t{\bold{B}_s \cdot dW_s}$ is a new $n$ dimensional Brownian motion. This property has some interesting consequences for the Markovian Brownian-motion-driven SDEs. In particular, the law of the solution $X_t \in \mathbb{R}^n$ to a SDE $(\mu,\sigma)$ driven by the Brownian motion $W$ is not uniquely characterized by the mathematical objects $(\mu,\sigma)$ and $W$. Indeed $X$ is also a solution to the SDE $(\mu, \sigma \cdot B)$ driven by  $W'_t=\int_0^t{B^{-1}(X_s,s)\cdot dW_s}$, where $B:\mathbb{R}^m \times \mathbb{R}_+ \rightarrow O(n)$ is any measurable function. This means that weak solutions to a Brownian motion driven SDE are not identified by the coefficients $(\mu,\sigma)$ of the SDE but by the generator $L=\mu \cdot \partial+\frac{1}{2}\sigma \cdot \sigma^T \cdot \partial^2$, which is invariant with respect to random rotations. In this work we extend this invariance property from Brownian motion to general semimartingales taking values on a finite dimensional Lie group.\\

The first step is the introduction of the notion of \emph{geometrical SDE}, inspired by the works of Serge Cohen (see \cite{Cohen1995,Cohen1996} and also \cite{Applebaum1997}). Given a finite dimensional manifold $M$, a finite dimensional Lie group $N$ and a topological space $\mathcal{K}$, a geometrical SDE driven by a semimartingale on $N$ is described by a smooth map $\Psi_{\cdot}:M \times N \times \mathcal{K} \rightarrow M$ such that $\Psi_k(x,1_N)=x$ and by a predictable locally bounded process $K_t$ defined on $\mathcal{K}$. The map $\Psi_k$, in some way, describes the jumps of the solution process $X$ with respect to the jumps of the driving process $Z$. Indeed, if we set$\Delta Z_t=Z_t \cdot Z^{-1}_{t_-}$, we have that $X_t=\Psi(X_{t_{-}},\Delta Z_t)$. If the manifold $M$ coincides with the Lie group $N$, we can restrict our attention to \emph{right invariant geometrical SDEs}, i.e. SDEs of the form $\Psi_k(\tilde{z},z)=\Xi_k(z) \cdot \tilde{z}$, where $\Xi_{\cdot}:N \times \mathcal{K} \rightarrow N$ satisfies $\Xi_k(1_N)=1_N$. This kind of SDEs, transforming the semimartingale $Z$ on $N$ in a new semimartingale $\tilde{Z}$ on $N$, which is the unique (strong) solution to the SDE $d\tilde{Z}_t=\Xi_{K_t}(dZ_t)$ such that $\tilde{Z}_0=1_N$, provides the set of \emph{random transformations}. The relation between random transformations and geometrical SDEs extends the associative property of the It\^o integral: if $X$ is a solution to the SDE $\Psi_{K_t}(x,z)$ driven by $\tilde{Z}$, then $X$ is a solution to the SDE $\Psi_{K_t}(x,\Xi_{K'_t}(z))$ driven by $Z$. Moreover, if $\mathcal{K}=\mathcal{G}$ is a topological group and $\Xi_g$ is a group action on $N$, the set of random transformations forms a group where the inverse is given by $dZ_t=\Xi_{G^{-1}_t}(d\tilde{Z}_t)$.\\
In Section \ref{subsection_comparison}, we provide a comparison between geometrical SDEs and other notions of SDEs driven by c\acc{a}dl\acc{a}g semimartingales commonly found in the literature, such as Marcus-type SDEs (see \cite{Protter1995,Marcus1981}) and their generalizations (for example the theory of stochastic flows with jumps on manifolds \cite{Applebaum2001,Kunita1999} and rough paths theory for processes with jumps \cite{Friz2017(2),Friz2017}), SDEs driven by L\'evy processes through Poisson measures (see \cite{Applebaum2004,Kunita2004}), or more general random measures (see \cite{Bichteler2002}).\\

Once we have introduced random transformations, we can consider a semimartingale $Z_t \in N$ admitting a \emph{gauge symmetry group $\mathcal{G}$ with action $\Xi_g$}  as the family of semimartingales for which the random transformations of the form $\Xi_{G_t}(dZ_t)$ preserve the law of $Z$ for any predictable locally bounded process $G_t \in \mathcal{G}$.\\
Furthermore, in order to make our notion of invariance effective, we provide a simple method for verifying whether a given semimartingale admits a gauge symmetry group, generalizing the role played by the L\'evy characterization of Brownian motion in the proof of random rotation invariance of Brownian motion. Indeed Theorem \ref{theorem_characteristic2} gives a necessary and sufficient condition for the presence of a gauge symmetry group in terms of the \emph{characteristic triplet $(b,A,\nu)$} of the semimartingale $Z$ on the Lie group $N$. In the case of a L\'evy process on $N$, where $(b,A,\nu)$ are deterministic, this general condition can be reduced to a simpler deterministic one (see Theorem \ref{theorem_characteristic3}). Furthermore, we extend the previous deterministic characterization of gauge invariant L\'evy processes to some special cases of non-Markovian semimartingales (see Theorem \ref{theorem_characteristic4}), providing also some examples of non-Markovian gauge symmetric semimartingales (see Section \ref{subsection_non_markovian}).\\

Gauge symmetries generalize in two different directions the results on symmetries of stochastic processes appearing in the previous literature. \\
First of all, they fit into the research on invariance of stochastic processes with respect to random transformations (see, for example, \cite{Kallenberg2005} for Brownian motion, infinite dimensional Gaussian processes and $\alpha$-stable processes, or \cite{Privault2012} for multidimensional Poisson processes). In this setting, our approach allows us to consider general semimartingales with jumps and the use of It\^o stochastic calculus and semimartingales stochastic characteristics give us the possibility of working with explicit examples.\\
Furthermore, gauge symmetries represent a generalization of deterministic symmetries of stochastic processes defined on Lie groups or symmetric spaces (see \cite{Albeverio2007,Applebaum2015,Gangolli1964,Liao2016}). Indeed, in Proposition \ref{proposition_automorphism}, we prove that, if $\mathcal{G} \subset Aut(N)$ and $\Xi_g$ is the natural action of $Aut(N)$, then, for $G_t=g_0 \in \mathcal{G}$ deterministic, the natural transformation $\Xi_{g_0}(Z_t)$ and our random transformation $\Xi_{G_t}(dZ_t)=\Xi_{g_0}(dZ_t)$ coincide. This fact has an important consequence which generalizes the Brownian motion case, where the invariance with respect to deterministic rotations is equivalent to the invariance with respect to random rotations. Indeed as proven in Corollary \ref{corollary_Levy}, in the case where $\mathcal{G} \subset Aut(N)$, a L\'evy process, whose law is uniquely determined by its characteristics, is invariant with respect to our random transformations if and only if it is invariant with respect to deterministic transformations.\\

The natural field of application of our results is the study of the symmetries of general SDEs (see \cite{DMU2,DMU1,Glover1990,Cami2009,Liao1992,Liao2004,Liao2016} for the current literature and Section \ref{section_gauge} and \cite{DeVecchithesis} for some applications of our theory). The problem here is to find a diffeomorphism $\Phi:M \rightarrow M$ transforming a process $X$ that is a solution to a SDE into the process $\Phi(X)$ that solves the same SDE. If we look for weak solutions, the presence of gauge symmetries is fundamental in order to characterize the symmetries $\Phi$ of a given SDE. In fact,  as in the Brownian motion case, also in this framework, the law of the process $X$ is no more characterized by a single SDE, but it is determined by the whole family of SDEs related by a gauge transformation.\\
A second interesting application field is the geometry of stochastic processes on Riemannian manifolds. Indeed, the gauge invariance property can be useful for explaining the relationship between Brownian motion and Riemannian geometry (see, e.g.,  \cite{Elworthy1982,Emery1989}) or in the study of L\'evy processes taking values in Riemannian manifolds (see \cite{Applebaum2000}). See the end of Section \ref{section_gauge} for some ideas in this direction. \\

The paper is organized as follows. In Section \ref{section_SDE} we introduce the notion of geometrical SDEs and random transformations, and discuss their composition properties. In Section \ref{section_gauge} we define the concept of gauge symmetry providing some applications and in Section \ref{section_characteristics} we study the relationship between gauge symmetries and semimartingale characteristics, giving also some examples of gauge symmetric semimartingales.

\section{Random transformations and gauge symmetries of semimartingales}\label{section_SDE}

\subsection{Geometrical SDEs with jumps}\label{subsection_definition_SDE}

In this section we introduce a family of SDEs defined on a manifold $M$ and driven by a semimartingale taking values on a (finite dimensional) Lie group $N$. The family presented here is strongly inspired by the works of Cohen \cite{Cohen1996} (see also \cite{Applebaum1997,Cohen1995}), even if we introduce some minor changes in order to make these concepts more suitable to our theory. \\
Before starting with a detailed description of the notion of geometrical SDEs, let us consider a simple example. Given a discrete time process $Z$ defined on a Lie group $N$ and a measurable map $\Psi:M \times N \rightarrow M$, it is immediate to use the map $\Psi$ for defining, given the initial condition $X_0$, a new discrete time process $X$ on the manifold $M$ in the following way
\begin{equation}\label{equation_map}
 X_{n}=\Psi(X_{n-1},\Delta Z_n),
\end{equation}
where $\Delta Z_n:=Z_n \times (Z_{n-1})^{-1}$. In this setting the process $X$ can be seen as the solution process to a SDE defined by the map $\Psi$ and driven by the semimartingale $Z$. A geometrical SDE provides a generalization of the previous picture to the case of SDEs defined on a manifold and driven by a continuous time semimartingale. 
In the following we describe the notion of geometrical SDE, postponing to Section \ref{subsection_comparison} the comparison with other approaches which are more common in the current literature, as well as the the discussion of the relevance of this family of SDEs in our theory and of their wide applicability in stochastic analysis.\\

\noindent The principal object in our definition is a map
$$\Psi_{\cdot}(\cdot,\cdot):M \times N \times \mathcal{K} \rightarrow M,$$
where $\mathcal{K}$ is a topological space, 
 $\Psi_k$ is smooth in the $M,N$ variables, $\Psi_k(x, 1_N)=x$  and $\Psi_k$ and all its derivatives with respect to the $M,N$ variables are continuous in all their arguments. When we consider $\mathcal{K}$ consisting of a single point $\{k_0\}$, we write $\Psi$ instead of $\Psi_{k_0}$.\\
Let us define an auxiliary map $\overline{\Psi}_{\cdot}:M \times N \times N \times \mathcal{K} \rightarrow M$ defined as $\overline{\Psi}_k(x,z',z)=\Psi_k(x,z'\cdot z^{-1})$ and a process $K$ taking values on $\mathcal{K}$ which is \emph{predictable and locally bounded}, i.e. $K$ is a predictable process such that there exist an increasing sequence of stopping times $\tau_n \rightarrow + \infty$ and an increasing sequence of compact sets $\mathfrak{K}_n \subset \mathcal{K}$ such that $K_{t}(\omega) \in \mathfrak{K}_n$ whenever $0 < t \leq \tau(\omega)$.\\ 

\noindent In order to introduce the notion of solution to a SDE $\Psi_k$, we give the following definition of semimartingales on a manifold $M$ (see Emery \cite[Section 3.1]{Emery1989}).

\begin{definition}
An adapted c\acc{a}dl\acc{a}g stochastic process $X$ on a smooth manifold $M$ is a semimartingale if, for any smooth function $f \in
\cinf(M)$, the real-valued process $f(X)$ is a real-valued semimartingale.
\end{definition}

\noindent If $M$ and $N$ admit some global coordinate systems $x^i$ and $z^{\alpha}$ respectively, a semimartingale $X$ in $M$ is a solution to the SDE defined by $\Psi_{K_t}$ and driven by $Z$ if and only if, for a suitable stopping time $\tau$, we have
\begin{equation}\label{equation_manifold1}
\begin{array}{ccl}
X^i_{t\wedge \tau}-X^i_0&=&\int_0^{t\wedge \tau}{\partial_{z'^{\alpha}}(\overline{\Psi}^i_{K_s})(X_{s_-},Z_{s_-},Z_{s_-})dZ^{\alpha}_s}+\\
&&+\frac{1}{2}\int_0^{t\wedge \tau}{\partial_{z'^{\alpha}z'^{\beta}}(\overline{\Psi}^i_{K_s})(X_{s_-},Z_{s_-},Z_{s_-})d[Z^{\alpha},Z^{\beta}]^c_s}+\\
&&+\sum_{0\leq s \leq
{t\wedge \tau}}\{\overline{\Psi}^i_{K_s}(X_{s_-},Z_{s},Z_{s_-})-\overline{\Psi}^i_{K_s}(X_{s_-},Z_{s_-},Z_{s_-})+\\
&&-\partial_{z'^{\alpha}}(\overline{\Psi}^i_{K_s})(X_{s_-},Z_{s_-},Z_{s_-})\Delta
Z^{\alpha}_s\},
\end{array}
\end{equation}
where $\overline{\Psi}^i:=x^i \circ \overline{\Psi}$, $\partial_{z'^{\alpha}}$ denotes the derivative of $\overline{\Psi}^i(x,z',z)$ with respect to the second set $z'$ of variables on $N$ and with respect to the coordinates system $z^{\alpha}$, $X^i:=x^i(X)$, $Z^{\alpha}:=z^{\alpha}(Z)$, $\Delta Z^{\alpha}_s:=Z^{\alpha}_s-Z^{\alpha}_{s_-}$, and we use Einstein notation for repeated indexes.\\
In order to extend the previous definition to the case of two general smooth
manifolds $M,N$ we  introduce two embeddings $i_1:M
\rightarrow \mathbb{R}^{k_M}$ and $i_2:N \rightarrow
\mathbb{R}^{k_N}, k_M, k_N \in \mathbb{N}$, and an extension
$$\tilde{\Psi}_k: \mathbb{R}^{k_M} \times \mathbb{R}^{k_N} \times \mathbb{R}^{k_N} \times \mathcal{K} \rightarrow \mathbb{R}^{k_M},$$
of the map $\overline{\Psi}$ such that
$$\tilde{\Psi}_k(i_1(x),i_2(z'),i_2(z)))=i_1(\overline{\Psi}_k(x,z',z)).$$
The existence of such embeddings is given by the Whitney theorem (see, e.g., \cite[Chapter 1 Section 8]{Guillemin1974}) and the existence of such extension is guaranteed by the existence of a tubular neighbourhood for smooth submanifold (see, e.g., \cite[Chapter 2 Section 3]{Guillemin1974})). In the following with a slight abuse of notations we identify an SDE defined by $\overline{\Psi}$ or $\tilde{\Psi}$ with the SDE defined by $\Psi$.

\begin{definition}\label{definition_solution}
A semimartingale $X$ defined on the manifold $M$ solves the \emph{geometrical SDE} defined by $\Psi_{K_t}$ and driven by the noise $Z$ on the Lie group $N$ if, for any embedding $i_1,i_2$ as above, and for any extension $\tilde{\Psi}_k$ as above, $i_1(X_{t \wedge \tau}) \in \mathbb{R}^{k_M}$ solves the integral problem \refeqn{equation_manifold1}, where the map
$\overline{\Psi}_{K_t}$ is replaced by $\tilde{\Psi}_{K_t}$ and the noise $Z$ is replaced by $i_2(Z_{t \wedge \tau})$.\\
In this situation we write
$$dX_t=\Psi_{K_t}(dZ_t).$$
\end{definition}

\noindent When it is not strictly necessary, we omit the stopping time $\tau$ in the definition of solution to a SDE.

\begin{remark}\label{remark_map}
When the driving semimartingale $Z$ has predictable jumps at times $t=1,2,...,n,... \in \mathbb{N}$, Definition \ref{definition_solution} reduces to equation \refeqn{equation_map}.
\end{remark}

\begin{remark}\label{remark_definition}
In Definition \ref{definition_solution} the request that the map $\Psi_k$ is smooth in the $M \times N$ component can be relaxed. Indeed, it is only necessary to require that $\Psi_k$ is two times differentiable in the points of the form $(x,1_N)$ for any $x \in M$ and that the expression $\tilde{\Psi}(x,z',z)-i_1(x)-\partial_{z'^{\alpha}}(\tilde{\Psi})(x,z,z)\Delta z^{\alpha}$ is $O(|\Delta z|^2)$ uniformly on the compact sets of $M \times N$.
\end{remark}

\noindent Due to the following important theorem, Definition \ref{definition_solution} is in general not empty.

\begin{theorem}\label{theorem_manifold1}
Given two open subsets $M$ and $N$  of  $\mathbb{R}^m$ and
$\mathbb{R}^n$ respectively, for any semimartingale $Z$ on
$N$ and any $x_0 \in M$, there exist a stopping time $\tau$, almost surely strictly positive, and a semimartingale $X$ on $M$, uniquely defined until $\tau$ and such that $X_0=x_0$ almost surely, such that $X$ is a solution to the SDE $\Psi_{K_t}$ until the stopping time $\tau$ driven by $Z$.
Furthermore, if $M$ is a general (finite dimensional) manifold, $N$ a general (finite dimensional) Lie group and $Z$ is a semimartingale on $N$, $i_1,i_2$ are  two embeddings of $N,M$ in
$\mathbb{R}^{k_M}$ and $\mathbb{R}^{k_N}$ and $\tilde{\Psi}_k$ is any extension of $\overline{\Psi}_k$, then the unique solution
$\tilde{X}$ to the SDE $\tilde{\Psi}_k$ is of the form $(i_1(X),i_2(Z))$ for a unique semimartingale $X$ on $M$. Finally, the process
$X$ does not depend on the embeddings $i_1,i_2$ and on the extension $\tilde{\Psi}_k$.
\end{theorem}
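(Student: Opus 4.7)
My strategy is to establish Part 1 by recognising the integral equation \refeqn{equation_manifold1} as a standard semimartingale-driven SDE with locally Lipschitz coefficients, and then to deduce Part 2 from Part 1 through an embedding-plus-tangency argument.

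For Part 1, I rewrite \refeqn{equation_manifold1} in the schematic form
$$X^i_t = X^i_0 + \int_0^t F^i_\alpha(X_{s_-},Z_{s_-},K_s)\,dZ^\alpha_s + \tfrac{1}{2}\int_0^t H^i_{\alpha\beta}(X_{s_-},Z_{s_-},K_s)\,d[Z^\alpha,Z^\beta]^c_s + \sum_{0\le s\le t} J^i(X_{s_-},Z_s,Z_{s_-},K_s),$$
where $F, H$ are built from the first and second $z'$-derivatives of $\overline{\Psi}_k$ evaluated at $z'=z$, and $J(x,z',z,k):=\overline{\Psi}_k(x,z',z)-x-\partial_{z'^\alpha}\overline{\Psi}_k(x,z,z)(z'-z)^\alpha$ is the second-order Taylor remainder. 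Smoothness of $\Psi_k$ in $(M,N)$ and continuity in $k$ give that $F,H$ are locally Lipschitz in the $M$ variable uniformly over $(z,k)$ in compact sets, and $|J(x,z',z,k)|\le C_{\mathfrak{K}_n}(x)|z'-z|^2$, so the jump sum converges absolutely because $\sum_{s\le t}|\Delta Z_s|^2<\infty$ for any semimartingale. Using the local boundedness of $K$ via the stopping times $\tau_n$ and compacts $\mathfrak{K}_n$, I then invoke classical local existence and uniqueness for SDEs driven by c\acc{a}dl\acc{a}g semimartingales, stopping at the first exit of $X$ from a compact neighborhood of $x_0$, to obtain the required $\tau$ and $X$.

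For Part 2, fix embeddings $i_1,i_2$ and an extension $\tilde{\Psi}_k$. Since $i_2$ is smooth, $i_2(Z)$ is a semimartingale in $\mathbb{R}^{k_N}$, so Part 1 applied in ambient space produces a local solution $\tilde{X}$ in $\mathbb{R}^{k_M}$ with $\tilde{X}_0=i_1(x_0)$. I must show that, up to a (possibly smaller) stopping time, $\tilde{X}$ remains in $i_1(M)$, so that $X:=i_1^{-1}(\tilde{X})$ is a well-defined semimartingale on $M$. The jump update $\tilde{X}_s=\tilde{\Psi}_{K_s}(\tilde{X}_{s_-},i_2(Z_s),i_2(Z_{s_-}))$ preserves $i_1(M)$ by the identity $\tilde{\Psi}_k(i_1(x),i_2(z'),i_2(z))=i_1(\overline{\Psi}_k(x,z',z))$. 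For the continuous part, differentiating the same identity along tangent directions to $i_2(N)$ shows that $\partial_{z'^\alpha}\tilde{\Psi}$ at $(i_1(x),i_2(z),i_2(z))$ produces tangent vectors to $i_1(M)$ at $i_1(x)$, and the analogous computation for the second derivative, combined with the It\^o-to-Stratonovich correction carried by the $d[Z^\alpha,Z^\beta]^c$-term, shows that the total infinitesimal displacement lies in $Ti_1(M)$. Fixing a tubular retraction $r$ onto $i_1(M)$ in a neighborhood of $i_1(x_0)$ and applying It\^o's formula to $r(\tilde{X})$, one checks that $r(\tilde{X})$ satisfies the same ambient SDE with the same initial condition; Part 1 uniqueness then forces $r(\tilde{X})=\tilde{X}$, proving $\tilde{X}\in i_1(M)$.

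For the independence from the choices, I compare two triples $(i_1,i_2,\tilde{\Psi}_k)$ and $(i_1',i_2',\tilde{\Psi}'_k)$ via the joint embedding $(i_1,i_1')\times(i_2,i_2')$ into $\mathbb{R}^{k_M+k_M'}\times\mathbb{R}^{k_N+k_N'}$ equipped with a common extension; the ambient solution projects onto both $\tilde{X}$ and $\tilde{X}'$, whence $i_1^{-1}(\tilde{X})=(i_1')^{-1}(\tilde{X}')$ as processes on $M$. The delicate point, and the technical heart of the proof, is the tangency step of Part 2: jumps are intrinsic by construction, but the \emph{Itô} form of the continuous part has \emph{a priori} normal components, and one must verify that the specific correction $\tfrac{1}{2}\partial_{z'^\alpha z'^\beta}\tilde{\Psi}\,d[Z^\alpha,Z^\beta]^c$ is precisely what cancels them, which is ultimately the reason \refeqn{equation_manifold1} is coordinate-free.
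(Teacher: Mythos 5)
Your proposal is correct in outline but follows a genuinely different route from the paper: the paper's entire proof consists of observing that, since $K$ is predictable and locally bounded, $\tilde{\Psi}_{K_t}$ is locally Lipschitz (uniformly in $\omega$) up to a sequence of stopping times, and then invoking Theorem 2 of \cite{Cohen1996}, which already contains existence, uniqueness, confinement to the embedded submanifold, and independence of the embeddings and of the extension. You instead reconstruct that theorem from scratch: a fixed-point/classical-SDE argument for the ambient equation \refeqn{equation_manifold1} (where your observation that the jump remainder is $O(|\Delta z|^2)$ and hence summable against $[Z,Z]$ is exactly the right point, and is what the paper's Remark \ref{remark_definition} isolates), followed by a tubular-retraction argument to show the ambient solution stays on $i_1(M)$, and a product-embedding trick for independence of the choices. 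The comparison is clear: the paper's citation is economical but opaque, while your argument makes visible \emph{why} the specific It\^o form of \refeqn{equation_manifold1} is intrinsic. The one substantive caveat is that the step you yourself identify as the technical heart --- verifying that the normal component of $\partial_{z'^{\alpha}}\tilde{\Psi}\,dZ^{\alpha}$ generated by the It\^o differential of $i_2(Z)$ is exactly cancelled by $\tfrac{1}{2}\partial_{z'^{\alpha}z'^{\beta}}\tilde{\Psi}\,d[Z^{\alpha},Z^{\beta}]^c$ so that $r(\tilde{X})$ solves the same ambient equation --- is asserted rather than computed; this computation (an application of It\^o's formula to $r$ together with the identity $\tilde{\Psi}_k(i_1(x),i_2(z'),i_2(z))=i_1(\overline{\Psi}_k(x,z',z))$ differentiated twice along $Ti_2(N)$) is precisely the content the paper outsources to Cohen, so your proof is complete only once it is carried out. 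A minor additional remark: the jump sum in \refeqn{equation_manifold1} depends on the unknown $X_{s_-}$ pointwise rather than through a stochastic integral, so "classical local existence" should be understood in the sense of SDEs driven by random measures (as in \cite[Chapter 5]{Bichteler2002}) or of Cohen's own Picard scheme, not the bare Protter-type theorem.
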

\begin{proof}
Since the process $K$ is locally bounded, the function $\tilde{\Psi}_{K_t}$, up to a sequence of stopping times $\tau_n \rightarrow + \infty$,
is locally Lipschitz with Lipschitzianity constant uniform with respect to the point $\omega$ in the underlying probability space. Under these conditions, the statement follows from Theorem 2 in \cite{Cohen1996}.
${}\hfill$ \end{proof}

\subsection{Invariant geometrical SDEs on Lie groups}\label{subsection_invariant}

In this section we consider the special case $M=N$ (so we write $\Psi_k(\tilde{z},z) \in N$ with $\tilde{z} \in N$) and geometrical SDEs $\Psi_k$ invariant with respect to the group of right multiplication, i.e. $\Psi_k(\tilde{z} \cdot n, z)=\Psi_k(\tilde{z},z) \cdot n$ for any fixed $n \in N$. A simple consequence of these requests is that $\Psi_k$ is of the form
$$\Psi_k(\tilde{z},z)=\Psi^{\Xi}_k(\tilde{z},z):=\Xi_k(z) \cdot \tilde{z},$$
where $\Xi_k(z)=\Psi_k(1_N,z)$ is a smooth map from $N$ into itself such that $\Xi_k(1_N)=1_N$. In the following we write $d\tilde{Z}_t=\Xi_{K_t}(dZ_t)$ instead of $d\tilde{Z}_t=\Psi^{\Xi}_{K_t}(dZ_t)$.\\
In order to provide an heuristic reason for the introduction of geometrical SDE invariant with respect to the right multiplication on the Lie group $N$, we consider a discrete time semimartingale $Z$ (or equivalently a semimartingale $Z$ with predictable jumps at time $0,1,...,n,... \in \mathbb{N}$). In this case we have that $\tilde{Z}_n=\Xi_{K_{n-1}}(\Delta Z_n) \cdot \tilde Z_{n-1}$ or equivalently
$$\Delta \tilde{Z}_n=\Xi_{K_{n-1}}(\Delta Z_n)$$
(see Remark \ref{remark_map}). Therefore the map $\Xi_k$ describes the evolution of the jumps of the process $\tilde{Z}$ and, in particular, the jumps of the process $\tilde{Z}$ depend only on the jumps of the process $Z$ and not on the process $Z$ itself. \\

\noindent The invariant geometrical SDEs defined on the Lie group $N$ play the role of random transformations. Indeed, if we fix the locally bounded process $K$ in $\mathcal{K}$ and if $Z$ is a semimartingale on $N$, the unique solution to $d\tilde{Z}_t=\Xi_{K_t}(dZ_t)$ such that $\tilde{Z}_t=1_N$ is a new semimartingale on $N$. For this reason we call the map $Z \rightarrow \tilde Z$, where $\tilde Z$ is the solution to the SDE $d\tilde Z_t=\Xi_{K_t}(dZ_t)$, the \emph{random transformation of $Z$} related with the process $K$ and the map $\Xi_k$. Indeed a random transformation is a function from the set of semimartingales such that $Z_0=1_N$ into itself. Hereafter we assume that $Z_0=1_N$.\\

\noindent It is important to note that the random transformations just introduced could be seen (under some additional conditions) as an extension of deterministic transformations of a semimartingale. First of all, let $\mathcal{K}=Aut(N)$ be
the group of (smooth) automorphisms of $N$ (which is a finite dimensional Lie group) and let $\Xi_{\cdot}(\cdot):N \times \mathcal{K} \rightarrow N$ be the natural action of $Aut(N)$ on $N$, i.e. for any $k \in Aut(N)$ and $z, z' \in N$, $\Xi_{k}(z \cdot z')=\Xi_k(z) \cdot \Xi_k(z')$. Then the following proposition holds.

\begin{proposition}\label{proposition_automorphism}
In the previous setting for any (deterministic) $k \in Aut(N)$ and for any semimartingale $Z$, denoting by $\tilde{Z}$ the solution to $d\tilde{Z}_t=\Xi_k(dZ_t)$ and taking $Z'_t=\Xi_k(Z_t)$, we have $\tilde{Z}=Z'$.
\end{proposition}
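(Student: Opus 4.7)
The strategy is to recognize that $Z'_t = \Xi_k(Z_t)$ and $\tilde Z_t$ both start at $1_N$ (since $\Xi_k(1_N)=1_N$ for an automorphism, and $Z_0 = 1_N$ by our standing assumption), and then to invoke the uniqueness part of Theorem \ref{theorem_manifold1}. Thus the entire task reduces to verifying that $Z'_t = \Xi_k(Z_t)$ satisfies the integral equation \refeqn{equation_manifold1} that defines a solution to $d\tilde Z_t = \Xi_k(dZ_t)$.

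The key algebraic observation, which makes the verification essentially free, is the following. The auxiliary map for the right-invariant SDE $\Psi^{\Xi}_k$ is
$$\overline{\Psi}_k(\tilde z, z', z) = \Xi_k(z' \cdot z^{-1}) \cdot \tilde z.$$
Because $\Xi_k$ is a group homomorphism, $\Xi_k(z'\cdot z^{-1}) = \Xi_k(z') \cdot \Xi_k(z)^{-1}$, and consequently, evaluating at the particular point $\tilde z = \Xi_k(z)$,
$$\overline{\Psi}_k\bigl(\Xi_k(z),\, z',\, z\bigr) = \Xi_k(z') \cdot \Xi_k(z)^{-1} \cdot \Xi_k(z) = \Xi_k(z'),$$
independently of $z$. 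Hence, if we substitute $\tilde z = Z'_{s_-} = \Xi_k(Z_{s_-})$ along the process and work in a coordinate system (or in an embedding $i_2 : N \hookrightarrow \mathbb{R}^{k_N}$ with extension $\tilde\Psi_k$ as in Definition \ref{definition_solution}), the integrands
$\partial_{z'^\alpha}(\overline{\Psi}^i_k)(\Xi_k(Z_{s_-}),Z_{s_-},Z_{s_-})$ and $\partial_{z'^\alpha z'^\beta}(\overline{\Psi}^i_k)(\Xi_k(Z_{s_-}),Z_{s_-},Z_{s_-})$ coincide with the first and second derivatives of the map $z' \mapsto \Xi_k^i(z')$ evaluated at $z' = Z_{s_-}$, and the jump term collapses to $\Xi_k^i(Z_s) - \Xi_k^i(Z_{s_-}) - \partial_{z^\alpha}\Xi_k^i(Z_{s_-})\Delta Z^\alpha_s$.

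With this reduction, the right-hand side of \refeqn{equation_manifold1} (with $X$ replaced by $Z'$) becomes exactly the classical It\^o formula applied to the composition $\Xi_k \circ Z$ in the embedded picture, which holds for any semimartingale $Z$ on $N$ and any smooth map $\Xi_k$. Therefore $Z'$ satisfies the defining integral equation, and uniqueness in Theorem \ref{theorem_manifold1} gives $\tilde Z = Z'$. I expect the only real bookkeeping obstacle to be the verification, at the level of the embedding and the extension $\tilde\Psi_k$, that the two derivatives computed above really do agree with those of $i_1(\Xi_k(\cdot))$; but this is forced by the identity $\tilde\Psi_k(i_1(x), i_2(z'), i_2(z)) = i_1(\overline\Psi_k(x,z',z))$ together with the homomorphism identity displayed above, so no genuine analytic difficulty arises.
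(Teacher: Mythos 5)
Your proposal is correct and follows essentially the same route as the paper: both verify that $Z'=\Xi_k(Z)$ satisfies the defining integral equation by using the homomorphism identity to collapse $\overline{\Psi}_k$ to (a right-translate of) $\Xi_k$, recognize the result as It\^o's formula for $\Xi_k\circ Z$ in the embedded picture, and conclude by the uniqueness statement of Theorem \ref{theorem_manifold1}. The one piece of bookkeeping you defer --- that the ambient derivatives of an arbitrary extension $\tilde{\Psi}_k$ can be replaced by those of an extension of $S\circ\Xi_k$ along $S(N)$, which is not literally forced by the identity on the submanifold but only by the extension-independence of the It\^o expression --- is exactly what the paper handles explicitly via the special extensions of Lemma \ref{remark_extension} and the right-invariance of the $Y_\alpha$.
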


\noindent Hereafter the equality between two stochastic processes means that the two processes are indistinguishable. Before proving Proposition \ref{proposition_automorphism} we fix some notations and introduce a lemma which we often use in the following. We denote by $Y_1,...,Y_n$ a set of generators for the Lie algebra of right-invariant vector fields on $N$, and we identify this Lie algebra with the Lie algebra $\mathfrak{n}$ associated with the Lie group $N$ (we recall that $\mathfrak{n}=T_{1_N}N$). Since $Y_{\alpha}$ are right invariant, denoting by $R_n:N \rightarrow N$ the right multiplication by the element $n \in N$, for any $f \in \cinf(N)$ we have that $Y_{\alpha}(f \circ R_n)(z)=Y_{\alpha}(f)(R_n(z))=Y_{\alpha}(f)(z \cdot n)$.\\
Let $S:N \rightarrow \mathbb{R}^{k_N}$ be an embedding of $N$ and let $P:N \rightarrow \matr(n,k_N)$ be the matrix defined as $P=(Y_{\alpha}(S^i))|_{\stackrel{\alpha=1,...,n}{i=1,...,k_N}}$. Since $S$ is an embedding (and so it is injective), the matrix $P$ is pointwise of maximal rank. This means that there exists a matrix $\tilde{P}:N \rightarrow \matr(k_N,n)$ defined as $\tilde{P}=(P^T \cdot P)^{-1} \cdot P^T$ which is pointwise the pseudoinverse of $P$, i.e. $\tilde{P} \cdot P=I_n$ and $P \cdot \tilde{P}|_{Im(P)}=Id|_{Im(P)}$ (see \cite{Penrose1955} for the definition of pseudoinverse).

\begin{lemma}\label{remark_extension}
Given a smooth function $f \in \cinf(N)$ and an embedding $S:N \rightarrow \mathbb{R}^{k_N}$, there exists an extension $\tilde{f} \in \cinf(\mathbb{R}^{k_N})$ of $f$ such that
\begin{eqnarray}
\partial_{s^i}(\tilde{f})\circ S &= & \tilde{P}^{\alpha}_i Y_{\alpha}(f) \label{equation_extension1}\\
\partial_{s^is^j}(\tilde{f}) \circ S&=&\tilde{P}^{\beta}_i \tilde{P}^{\alpha}_jY_{\beta}(Y_{\alpha}(\tilde{f})) \circ S +
Y_{\beta}(\tilde{P}^{\alpha}_j)\tilde{P}^{\beta}_iY_{\alpha}(\tilde{f}) \circ S \label{equation_extension2},
\end{eqnarray}
where $s^i$ is the coordinate system of $\mathbb{R}^{k_N}$.
\end{lemma}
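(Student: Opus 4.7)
The plan is to build the extension $\tilde f$ concretely via a tubular neighbourhood of $S(N)$ in $\mathbb{R}^{k_N}$ and to derive both identities by combining the chain rule with the pseudoinverse relation $\tilde P\,P=I_n$.

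I would start with the tubular neighbourhood theorem, which applies since $S$ is an embedding: there exists an open set $W\supset S(N)$ and a smooth retraction $\pi:W\to S(N)$ (for instance the Euclidean nearest-point projection). Setting $\tilde f:=f\circ S^{-1}\circ\pi$ on $W$ and extending smoothly to $\mathbb{R}^{k_N}$ with a cutoff produces a $C^\infty$ extension of $f$ whose gradient at any $s=S(z)\in S(N)$ is tangent to $S(N)$, because $d\pi_s$ is the orthogonal projection onto $T_sS(N)=\mathrm{Im}(P(z))$.

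For \refeqn{equation_extension1}, I would combine the tangentiality of $\nabla\tilde f$ with the chain-rule identity $Y_\alpha(f)=Y_\alpha(\tilde f\circ S)=P^i_\alpha\,\partial_{s^i}\tilde f\circ S$, which links frame derivatives on $N$ to the Cartesian derivatives of $\tilde f$ restricted to $S(N)$. Since $\nabla\tilde f|_{S(N)}$ lies in $\mathrm{Im}(P)$, one can invert using $\tilde P$: multiplying the above by $\tilde P^\alpha_j$ and using $\tilde P\,P=I_n$ recovers $\partial_{s^i}\tilde f\circ S=\tilde P^\alpha_i Y_\alpha(f)$.

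For \refeqn{equation_extension2}, I would apply the right-invariant derivation $Y_\beta$ to both sides of \refeqn{equation_extension1}: the Leibniz rule on the right yields $Y_\beta(\tilde P^\alpha_i)Y_\alpha(f)+\tilde P^\alpha_i Y_\beta Y_\alpha(f)$, while the chain rule on the left yields $P^k_\beta\,\partial_{s^ks^i}\tilde f\circ S$. A further contraction with $\tilde P^\beta_j$ converts the left-hand side into $(P\tilde P)^k_j\,\partial_{s^ks^i}\tilde f\circ S$, which matches $\partial_{s^js^i}\tilde f\circ S$ provided the Hessian of $\tilde f$ at $S(N)$ has the appropriate tangential structure; after index relabelling this produces the claimed formula.

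The main obstacle I foresee is the passage from $(P\tilde P)^k_j\,\partial_{s^ks^i}\tilde f\circ S$ back to $\partial_{s^js^i}\tilde f\circ S$: this step requires the Hessian of $\tilde f$ at points of $S(N)$ to be tangential in its first index, a property that is not automatic for the naive retraction-based extension because of the extrinsic curvature of the embedded $S(N)$. The fix is to refine the initial choice of $\tilde f$ by a correction that vanishes to second order in the normal coordinates of the tubular neighbourhood, so that $\tilde f|_{S(N)}=f$ and \refeqn{equation_extension1} are preserved while the normal contributions to the Hessian are killed, thereby making the contraction argument rigorous.
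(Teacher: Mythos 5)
Your construction and your derivation of \refeqn{equation_extension1} follow essentially the same route as the paper (tubular neighbourhood, an extension adapted to $S(N)$, then the chain rule combined with the pseudoinverse relation $\tilde P\,P=I_n$), and you have correctly isolated the crux of \refeqn{equation_extension2}: after differentiating \refeqn{equation_extension1} along $Y_\beta$ and contracting with $\tilde P^\beta_j$, the left-hand side is $(P\tilde P)^k_j\,\partial_{s^ks^i}(\tilde f)\circ S$, so the argument closes only if the Hessian $H_{ij}=\partial_{s^is^j}(\tilde f)\circ S$ satisfies $\Pi H=H$, where $\Pi=P\tilde P$ is the tangential projector, i.e.\ only if $H$ annihilates normal vectors in one (hence, by symmetry, either) slot.

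The gap is in your proposed repair. A correction $c$ that is to preserve $\tilde f|_{S(N)}=f$ and \refeqn{equation_extension1} must satisfy $c=0$ and $\nabla c=0$ on all of $S(N)$; differentiating $V^i\partial_{s^i}(c)\circ\gamma\equiv 0$ along a curve $\gamma$ in $S(N)$ then shows that the Hessian of $c$ at points of $S(N)$ vanishes as soon as one argument is tangential. Such a correction therefore only alters the normal--normal block of $H$ and can never touch the mixed normal--tangential block. But the mixed block is exactly what obstructs you, and it is rigid: differentiating the constraint $\nu^i\partial_{s^i}(\tilde f)=0$ (for a normal field $\nu$ along $S(N)$) in a tangential direction $w$ gives $H(\nu,w)=-\nabla\tilde f\cdot\nabla_w\nu$, a second-fundamental-form term contracted with the tangential gradient, which is generically nonzero and is forced by \refeqn{equation_extension1} for \emph{every} admissible extension. (Concretely, for $S^1\subset\mathbb{R}^2$ and $f=f(\theta)$, any extension with tangential gradient has $\partial_{s^1s^2}\tilde f=-f'(0)$ at the point $(1,0)$, whereas your contraction step needs it to vanish.) Hence $\Pi H=H$ is unattainable and the passage from $(P\tilde P)^k_j\partial_{s^ks^i}\tilde f$ to $\partial_{s^js^i}\tilde f$ genuinely fails off the tangential block. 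Note that the paper's own proof only arranges the vanishing of $Y(\tilde f)$ and $Y(Y'(\tilde f))$ for $Y,Y'$ normal, i.e.\ it controls precisely the normal--normal block; accordingly, \refeqn{equation_extension2} holds only up to the extra term $\tilde P^\beta_jY_\beta\big((P\tilde P)^l_i\big)\,\partial_{s^l}(\tilde f)\circ S$, which drops out when the identity is contracted, as in its applications, against the tangential symmetric tensors $d[Z^i,Z^j]^c$. Your argument can be salvaged only in that contracted form, not as the pointwise matrix identity you aim for.
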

\begin{proof}
The existence of a tubular neighbourhood $U_N \subset \mathbb{R}^{k_N}$ of the submanifold $S(N)$, and the existence of a partition of unity subordinate with respect to an atlas of $U_N$ adapted with respect to the submanifold $S(N)$ (see for example \cite[Chapter 1 Section 8]{Guillemin1974}), ensure that there exist an extension $\tilde{f} \in \cinf(\mathbb{R}^{k_N})$ of $f$ and a $k_N-n$ dimensional distribution $D \subset T_{S(N)}\mathbb{R}^{k_N}$ that is the orthogonal complement of $T(S(N))$ such that for any $Y,Y' \in D$ $Y(\tilde{f})|_{S(N)}=0$ and $Y(Y'(\tilde{f}))=0$.\\
Using the fact that any smooth vector field in $T_{S(N)}\mathbb{R}^{k_N}$ can be expressed as a linear combination of $S^*(Y_{\alpha})$ (the right-invariant vector fields of $N$) and of a vector field of $D$ and since $\tilde{P}$ is the pseudoinverse of $P$ and $D$ is the orthogonal complement of $TS(N)$, we get the statement.
\hfill\end{proof}\\

\noindent\begin{proof}[Proof of Proposition \ref{proposition_automorphism}]
The key tools of the proof are It\^o formula, Lemma \ref{remark_extension} and the right invariance of $Y_{\alpha}$. First of all we note that $\overline{\Psi}^{\Xi}_k=\Xi_k(z') \cdot \Xi_k(z^{-1}) \cdot \tilde{z}$.
Let $\tilde{\Psi}^{\Xi}_k$ be an extension of $\overline{\Psi}_k$ as in Definition \ref{definition_solution}  and Lemma \ref{remark_extension}, put $Z^i=S^i(Z)$ and let $\tilde{\Xi}_k$ be an extension of the type of Lemma \ref{remark_extension} of $S \circ \Xi_k$. By Lemma \ref{remark_extension} and the right-invariance of $Y_{\alpha}$ we have that
\begin{eqnarray*}
\partial_{s'^i}(\tilde{\Psi}^r_k)(\tilde{z},z',z)|_{S(N)}&=&\tilde{P}^{\alpha}_i Y^{z'}_{\alpha}(S^r \circ \overline{\Psi})(\tilde{z},z',z)\\
&=&\tilde{P}^{\alpha}_i Y_{\alpha}(S^r \circ \Xi_k)(z' \cdot z^{-1} \cdot \Xi^{-1}_k(\tilde{z}))\\
&=&\partial_{s^i}(\tilde{\Xi}^r_k)(z' \cdot z^{-1} \cdot \Xi^{-1}_k(\tilde{z}))|_{S(N)}.
\end{eqnarray*}
In the same way it is easy to prove, using equation \refeqn{equation_extension2}, that \\ $\partial_{s'^is'^j}(\tilde{\Psi}^r)(\tilde{z},z',z)|_{S(N)}=\partial_{s'^is'^j}(\tilde{\Xi}^r)(z' \cdot z^{-1} \cdot \Xi^{-1}_k(\tilde{z}))$.
By Definition  \ref{definition_solution} we have
\begin{eqnarray*}
d\tilde{Z}^i_t&=&\partial_{s^j}(\tilde{\Xi}^i_k)(\Xi^{-1}_k(\tilde{Z}_{t_-}))dZ^j_t+\partial_{s'^js'^r}(\tilde{\Xi}^i)(\Xi^{-1}_k(\tilde{Z}_{t_-})) d[Z^j,Z^r]_t+\\
&&+\tilde{\Xi}^i(\Delta Z_t \cdot \Xi^{-1}_k(\tilde{Z}_{t_-}))-\tilde{Z}^k_{t_-}-\partial_{s^j}(\tilde{\Xi}^i_k)(\Xi^{-1}_k(\tilde{Z}_{t_-}))\Delta Z^j_t.
\end{eqnarray*}
Using It\^o formula and the fact that $Z_t=\Xi^{-1}_k(Z'_t)$, we obtain that $Z'^i=S^i(Z')$ solves the previous differential relation and, by the uniqueness of the solution of geometrical SDEs, we have $\tilde{Z}=Z'$.
\hfill\end{proof}\\

\noindent The semimartingale $\tilde{Z}$, which is the random transformed semimartingale of $Z$ related to the process $K$ and the map $\Xi_k$, is the unique strong solution to the SDE $\Psi^{\Xi}_k$ driven by $Z$. In general the semimartingale $Z$ cannot be defined for all finite times $t$, since Theorem \ref{theorem_manifold1} assures only the local existence of $\tilde{Z}$. In order to guarantee that the map $Z \longmapsto \Xi_{K_t}(dZ_t)$ is a well defined map between semimartingales we need some conditions on $Z$, or $\Xi_k$ or $N$, ensuring that $\tilde{Z}$ has time of explosion $\tau=+\infty$.\\
For example, if $Z$ is piecewise constant with only a finite number of jumps in compact subsets of the time line $\mathbb{R}_+$, the semimartingale $\tilde{Z}$ is defined for all times. We give here an non-obvious statement that, under some conditions on the Lie group $N$, guarantees the existence of $\tilde{Z}$ for any time.

\begin{proposition}\label{proposition_faithful}
If $N$ admits a faithful finite dimensional representation, then, for any
locally bounded process $G_t$ in $\mathcal{G}$, the explosion time
of the SDE $d\tilde{Z}_t=\Xi_{G_t}(dZ_t)$ is $+
\infty$.
\end{proposition}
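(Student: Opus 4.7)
My plan is to reduce the problem to the global existence of a right-linear matrix SDE in an ambient vector space. Let $\rho : N \hookrightarrow GL(V) \subset \matr(V)$ be the given faithful finite-dimensional representation, and take $i_1 = i_2 = \rho$ as the embeddings in Definition \ref{definition_solution}. By the tubular-neighbourhood theorem, extend $\rho \circ \Xi_k \circ \rho^{-1}$ smoothly to a map $\tilde{\Xi}_k$ defined on a neighbourhood $U$ of $\rho(N)$ in $GL(V)$. For $\overline{\Psi}^{\Xi}_k$ I use the explicit matrix extension
$$\tilde{\Psi}^{\Xi}_k(\tilde{z}, z', z) = \tilde{\Xi}_k(z' \cdot z^{-1}) \cdot \tilde{z},$$
where both products are matrix multiplication in $\matr(V)$.

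The critical feature of this extension is that $\tilde{z}$ enters only through right matrix multiplication, so every term in \refeqn{equation_manifold1} is right-linear in $\tilde{Z}_{s^{-}}$. Indeed, the integrand of $dZ^\alpha_s$ factors as $\partial_{z'^\alpha}[\tilde{\Xi}_{K_s}(z' \cdot Z_{s^{-}}^{-1})]|_{z'=Z_{s^{-}}} \cdot \tilde{Z}_{s^{-}}$, the quadratic-variation integrand factors in the same way with second derivatives, and the jump-compensation term equals $[\tilde{\Xi}_{K_s}(\Delta Z_s) - I_V - \partial_{z'^\alpha}\tilde{\Xi}_{K_s}(1_V) \cdot \Delta Z^\alpha_s] \cdot \tilde{Z}_{s^{-}}$. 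Hence the SDE in ambient matrix coordinates reduces to
$$d\tilde{Z}_t = dH_t \cdot \tilde{Z}_{t^{-}}, \qquad \tilde{Z}_0 = I_V,$$
for a $\matr(V)$-valued semimartingale $H$ built from $Z$ and $G$, defined for all $t \geq 0$ thanks to the local boundedness of $G$ and the semimartingale regularity of $Z$; moreover at each jump time $I_V + \Delta H_t = \tilde{\Xi}_{G_t}(\Delta Z_t) = \rho(\Xi_{G_t}(\Delta Z_t)) \in GL(V)$ is invertible.

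Global existence of right-linear matrix SDEs driven by semimartingales (the matrix Dol\'eans--Dade exponential, cf.\ \cite{Protter1995}) then yields a unique $\matr(V)$-valued solution $\tilde{Z}^{\mathrm{amb}}$ defined for every $t \geq 0$. By the uniqueness assertion of Theorem \ref{theorem_manifold1}, this ambient solution stays in $\rho(N)$ and equals $\rho(\tilde{Z})$ for a unique $N$-valued semimartingale $\tilde{Z}$, so $\tilde{Z}$ is defined for all $t$ and the explosion time is $+\infty$. The main obstacle is the linearity check: one must exploit the right-invariance $\Psi^{\Xi}_k(\tilde{z}, z) = \Xi_k(z) \cdot \tilde{z}$ through the embedding $\rho$ so that the It\^o drift correction and the jump-compensation term both preserve the structure ``matrix function of $(Z, G)$ times $\tilde{Z}_{s^{-}}$'', ensuring that the ambient equation really is linear (and not merely of linear growth) so that the standard existence theory for linear matrix SDEs applies.
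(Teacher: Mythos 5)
Your proposal is correct and follows essentially the same route as the paper: pass to matrix coordinates via the faithful representation, observe that the right-invariant form $\Xi_k(z'\cdot z^{-1})\cdot\tilde z$ makes every term of \refeqn{equation_manifold1} linear in $\tilde Z_{s_-}$, and invoke global existence for linear matrix SDEs with jumps. You merely spell out more explicitly the Dol\'eans--Dade structure $d\tilde Z_t = dH_t\cdot\tilde Z_{t_-}$ and the step that the ambient solution stays in $\rho(N)$, both of which the paper leaves implicit.
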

\begin{proof}
Let $S:N \rightarrow \matr(l_N,l_N)$ (where $l_N \in \mathbb{N}$) be a faithful representation
of $N$. In this representation, the geometrical SDE associated
with $\Xi_k$ is defined by the map $\overline{\Psi}^{\Xi}_k$ given by
$$\overline{\Psi}^{\Xi}_k(\tilde{z},z',z)=S(\Xi_k(z' \cdot z^{-1})) \cdot S(\tilde{z}),$$
where $\cdot$ on the right-hand side denotes the usual matrix multiplication. If $s^i$ is the standard cartesian coordinate system in
$\matr(l_N,l_N)$, extending suitably $\Xi_k$ to all $\matr(l_N,l_N)$, we have that $\overline{\Psi}^{\Xi,i}_k(\tilde{s},s',s)$,
$\partial_{s'^j}(\overline{\Psi}^{\Xi,i}_k)(\tilde{s},s',s)$ and $\partial_{s'^js'^l}(\overline{\Psi}^{\Xi,i}_k)(\tilde{s},s',s)$ are linear in $\tilde{s}$.
So, putting $Z^i=S^i(Z)$ and $\tilde{Z}^i=S^i(\tilde{Z})$, the SDE \refeqn{equation_manifold1} related with $\overline{\Psi}^{\Xi}$
is linear in $\tilde{Z}$ and so, by well known results on SDEs with jumps in $\mathbb{R}^{l_N^2}$ (see, e.g., \cite[Chapter 5]{Bichteler2002}) the
solution has explosion time $\tau=+ \infty$ almost surely. ${}\hfill$ \end{proof}\\

\noindent Hereafter we always assume that the semimartingale $\tilde{Z}$ exists for all times. 

\subsection{Geometrical SDEs and random transformations}

In this subsection we provide a natural composition rule for random transformations exploiting their relationship with geometrical SDEs. This composition rule is evident in the case where the semimartingale $Z$ has discrete time (or equivalently it has only jumps in the predictable times $t=0,1,...,n, n \in \mathbb{N}$). Indeed suppose that $d\tilde{Z}_n=\Xi_{K'_n}(dZ_n)$ and that $dX_n=\Psi_{K_{n}}(d\tilde{Z}_n)$, then by recalling Remark \ref{remark_map} and the discussion at the beginning of Section \ref{subsection_invariant} we have that
$$X_n=\Psi_{K_{n-1}}(X_{n-1},\Xi_{K'_{n-1}}(\Delta Z_n)).$$
In other words the semimartingale $X$ solution to the SDE $\Psi_k$ driven by $\tilde{Z}$ is also solution to the SDE $\Psi_k(x,\Xi_{k'}(z))$ driven by the semimartingale $Z$. \\
This fact it is not a peculiarity of the previous case but it is a general property of geometrical SDEs. Indeed the following theorem holds.

\begin{theorem}\label{theorem_gauge1}
Let $N$ be a Lie group and suppose that
$X$ is a solution to the geometrical SDE $\Psi_{K_t}$ driven by $\tilde{Z}$ on $N$. If
$d\tilde{Z}_t=\Xi_{K'_t}(dZ_t)$ (where $K'$ is a predictable locally bounded process defined in the topological space $\mathcal{K}'$),  then $X$ is a solution to the
geometrical SDE $\hat{\Psi}_{K_t,K'_t}$, where
$$\hat{\Psi}_{k,k'}(x,z)=\Psi_k(x,\Xi_{k'}(z)).$$
\end{theorem}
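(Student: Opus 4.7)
My plan is to verify the conclusion directly in the embedded Euclidean formulation provided by Definition \ref{definition_solution}, using Theorem \ref{theorem_manifold1} (uniqueness) to identify the two candidate solutions. Fix embeddings $i_1:M \hookrightarrow \mathbb{R}^{k_M}$, $i_2:N \hookrightarrow \mathbb{R}^{k_N}$, and tubular-neighbourhood extensions $\tilde{\Psi}_k$ of $\overline{\Psi}_k$ and $\tilde{\Xi}_{k'}$ of $\overline{\Xi}_{k'}$ in the sense of Lemma \ref{remark_extension}. Setting $\overline{\hat{\Psi}}_{k,k'}(x,w',w)=\Psi_k(x,\Xi_{k'}(w'\cdot w^{-1}))$, a natural extension of $\overline{\hat{\Psi}}_{k,k'}$ is obtained by composing $\tilde{\Psi}_k$ with $\tilde{\Xi}_{k'}$ in the noise variables. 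By Theorem \ref{theorem_manifold1}, it is enough to show that the process $X$ (which is given to solve \refeqn{equation_manifold1} with $(\overline{\Psi}_{K_t},\tilde{Z})$) also solves \refeqn{equation_manifold1} with $(\overline{\hat{\Psi}}_{K_t,K'_t},Z)$.

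The verification is an Itô chain-rule computation. Starting from \refeqn{equation_manifold1} written for $X$ against $\tilde{Z}$, I substitute the expression for $d\tilde{Z}^{\alpha}$ coming from \refeqn{equation_manifold1} applied to the right-invariant SDE $d\tilde{Z}_t=\Xi_{K'_t}(dZ_t)$. The continuous martingale part of $\tilde{Z}$ satisfies $d[\tilde{Z}^{\alpha},\tilde{Z}^{\beta}]^c_s = \partial_{w'^{\gamma}}(\tilde{\Xi}^{\alpha}_{K'_s})\,\partial_{w'^{\delta}}(\tilde{\Xi}^{\beta}_{K'_s})\,d[Z^{\gamma},Z^{\delta}]^c_s$ (evaluated at the appropriate points); combining this with the first-order term $\partial_{z'^{\alpha}}\tilde{\Psi}^{i}_{K_s}\cdot d\tilde{Z}^{\alpha}_s$ and the second-order term $\tfrac12\partial_{z'^{\alpha}z'^{\beta}}\tilde{\Psi}^{i}_{K_s}\cdot d[\tilde{Z}^{\alpha},\tilde{Z}^{\beta}]^c_s$ and recollecting according to the usual chain rule for first and second derivatives of a composition, I recover the first-order and continuous second-order terms of \refeqn{equation_manifold1} for $\tilde{\hat{\Psi}}_{K_s,K'_s}$ driven by $Z$.

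For the jump terms, the key input is the right-invariant structure of $\Psi^{\Xi}_{k'}$: as noted after Definition \ref{definition_solution} and in Remark \ref{remark_map}, along jumps one has $\Delta\tilde{Z}_t=\tilde{Z}_t\cdot\tilde{Z}_{t_-}^{-1}=\Xi_{K'_t}(\Delta Z_t)$ (with $\Delta Z_t := Z_t\cdot Z_{t_-}^{-1}$). Consequently the jump of $X$ at $t$ equals $\overline{\Psi}_{K_t}(X_{t_-},\tilde{Z}_t,\tilde{Z}_{t_-})=\Psi_{K_t}(X_{t_-},\Xi_{K'_t}(\Delta Z_t))=\overline{\hat{\Psi}}_{K_t,K'_t}(X_{t_-},Z_t,Z_{t_-})$, and the ``jump-compensation'' sums in \refeqn{equation_manifold1} for the two descriptions coincide once the linear parts (which are absorbed in the Itô-integral terms computed above) are subtracted. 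Putting the three pieces together gives \refeqn{equation_manifold1} for $\overline{\hat{\Psi}}_{K_t,K'_t}$ driven by $Z$, and uniqueness from Theorem \ref{theorem_manifold1} identifies $X$ as the claimed solution. The main technical obstacle is the bookkeeping for the Hessian: showing that the cross terms $\partial_{z'^{\alpha}}\tilde{\Psi}\cdot\partial_{w'^{\beta}w'^{\gamma}}\tilde{\Xi}$ arising from the continuous quadratic variation of $\tilde{Z}$, together with $\partial_{z'^{\alpha}z'^{\beta}}\tilde{\Psi}\cdot\partial_{w'^{\gamma}}\tilde{\Xi}\cdot\partial_{w'^{\delta}}\tilde{\Xi}$, assemble exactly into $\partial_{w'^{\gamma}w'^{\delta}}\overline{\hat{\Psi}}$ evaluated on the diagonal $w'=w=Z_{s_-}$; this is the Faà di Bruno identity, but one must be attentive to the fact that the extensions $\tilde{\Psi}$ and $\tilde{\Xi}$ are only defined up to the choice of tubular neighbourhood, so the identity must be checked on $S(N)$ using Lemma \ref{remark_extension}.
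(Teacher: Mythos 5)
Your proposal is correct and follows essentially the same route as the paper: substitute the defining integral identity for $d\tilde{Z}$ into the one for $X$, use the quadratic-variation and jump formulas for $\tilde{Z}$ (the content of Lemma \ref{lemma_geometrical1} and Remark \ref{remark_geometrical1}, which you use implicitly), and reassemble the first- and second-order terms via the chain rule together with $\Delta\tilde{Z}_t=\Xi_{K'_t}(\Delta Z_t)$ and the identity $\overline{\Psi}_k(x,\overline{\Psi}^{\Xi}_{k'}(\tilde z,z',z),\tilde z)=\overline{\hat{\Psi}}_{k,k'}(x,z',z)$. The only cosmetic differences are that you carry the embeddings throughout (the paper reduces to global coordinates and defers the embedded case to Lemma \ref{remark_extension}) and that your final appeal to uniqueness from Theorem \ref{theorem_manifold1} is unnecessary, since being a solution is verified directly by the integral identity.
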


\noindent In order to prove Theorem \ref{theorem_gauge1} we need the following lemma.

\begin{lemma}\label{lemma_geometrical1}
Given $k$  c\acc{a}dl\acc{a}g semimartingales  $X^1,...,X^k$, let $H^{\alpha}_1,...,H^{\alpha}_k$, with $\alpha=1,...,r$, be predictable processes
which can be integrated along $X^1,...,X^k$ respectively. If $\Phi^{\alpha}(t,\omega,x^1,x'^1,...,x^k,x'^k):\mathbb{R}_+ \times \Omega
\times \mathbb{R}^{2k} \rightarrow \mathbb{R}$ are some progressively measurable random functions that are continuous in
$x^1,x'^1,...,x^k,x'^k$ and such that $|\Phi^{\alpha}(t,\omega,x^1,x'^1,...,x^k,x'^k)| \leq O((x^1-x'^1)^2+...+(x^k-x'^k)^2)$ as $x^i
\rightarrow x'^i$, for  almost every fixed $\omega \in \Omega$ and uniformly on compact subsets of $\mathbb{R}_+ \times \mathbb{R}^{2k}$, the processes
$$Z^{\alpha}_t=\int_0^t{H^{\alpha}_{i,s}dX^i_s}+\sum_{0 \leq s \leq t}\Phi^{\alpha}(s,\omega,X^1_{s_-},X^1_s,...,X^k_{s_-},X^k_s)$$
are semimartingales. Furthermore \normalsize
\begin{eqnarray}
&\Delta Z^{\alpha}_t=H^{\alpha}_{i,t} \Delta X^i_t + \Phi^{\alpha}(t,\omega,X^1_{t_-},X^1_t,...,X^k_{t_-},X^k_t),\label{equation_gauge4}\\
&[Z^{\alpha},Z^{\beta}]^c_t=\int_0^t{H^{\alpha}_{i,s}H^{\beta}_{j,s}d[X^i,X^j]^c_s},&\label{equation_gauge2}\\
&\int_0^t{K_{\alpha,s}dZ^{\alpha}_s}=\int_0^t{K_{\alpha,s}H^{\alpha}_{i,s}dX^i_s}+\sum_{0 \leq s \leq t}K_{\alpha,s}
\Phi^{\alpha}(s,\omega,X^1_{s_-},X^1_s,...).&\label{equation_gauge3}
\end{eqnarray}
\normalsize
\end{lemma}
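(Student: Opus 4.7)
The plan is to decompose $Z^\alpha$ as $Y^\alpha + J^\alpha$, where $Y^\alpha_t := \int_0^t H^\alpha_{i,s}\, dX^i_s$ is the stochastic integral part and $J^\alpha_t := \sum_{0 \leq s \leq t} \Phi^\alpha(s, \omega, X^1_{s_-}, X^1_s, \ldots, X^k_{s_-}, X^k_s)$ is the pure jump part. By the classical theory of stochastic integration against semimartingales, $Y^\alpha$ is itself a semimartingale with $\Delta Y^\alpha_t = H^\alpha_{i,t}\Delta X^i_t$ and continuous quadratic covariation $[Y^\alpha, Y^\beta]^c_t = \int_0^t H^\alpha_{i,s}H^\beta_{j,s}\, d[X^i, X^j]^c_s$. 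The whole content of the lemma therefore reduces to controlling $J^\alpha$.

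First I would show that $J^\alpha$ is well-defined and has locally finite variation. Localize by stopping times $\tau_n \uparrow \infty$ such that on $[0, \tau_n]$ each $X^i$ takes values in a fixed compact subset of $\mathbb{R}$; such $\tau_n$ exist because the $X^i$ are c\acc{a}dl\acc{a}g. The quadratic-growth hypothesis on $\Phi^\alpha$ then gives, for almost every $\omega$, a neighbourhood of the diagonal $\{x = x'\}$ and a constant $C = C(\omega, n)$ such that $|\Phi^\alpha(s, \omega, x, x')| \leq C \sum_i (x^i - x'^i)^2$ on that neighbourhood. Since each $X^i$ has only finitely many jumps of modulus larger than any fixed $\epsilon > 0$ on $[0, \tau_n]$, only finitely many summands fall outside the neighbourhood, contributing a finite amount via the continuity of $\Phi^\alpha$; for the remaining indices the bound $|\Phi^\alpha(s,\omega,X^i_{s_-}, X^i_s)| \leq C \sum_i (\Delta X^i_s)^2$ is summable because $\sum_s (\Delta X^i_s)^2 \leq [X^i, X^i]_{\tau_n} < \infty$ almost surely. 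Hence $J^\alpha$ is absolutely convergent, it is a pure jump process of locally finite variation, and in particular a semimartingale. Consequently $Z^\alpha = Y^\alpha + J^\alpha$ is a semimartingale.

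Once $J^\alpha$ is under control, the three identities follow quickly. Equation \refeqn{equation_gauge4} is obtained by reading off the jumps: $\Delta Z^\alpha_t = \Delta Y^\alpha_t + \Delta J^\alpha_t$, and by construction $\Delta J^\alpha_t = \Phi^\alpha(t, \omega, X^i_{t_-}, X^i_t)$. Equation \refeqn{equation_gauge2} uses that $J^\alpha$ is a pure jump process of finite variation; then $[J^\alpha, J^\beta]$ and $[Y^\alpha, J^\beta]$ are themselves pure jump processes, so their continuous parts vanish and $[Z^\alpha, Z^\beta]^c = [Y^\alpha, Y^\beta]^c$, which is the desired integral formula. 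Finally, \refeqn{equation_gauge3} splits as $\int K_\alpha\, dY^\alpha + \int K_\alpha\, dJ^\alpha$: the first integral equals $\int K_\alpha H^\alpha_i\, dX^i$ by associativity of the stochastic integral, while the second reduces to the sum $\sum_{0 \leq s \leq t} K_{\alpha,s}\Delta J^\alpha_s$ because $J^\alpha$ is a pure jump process of finite variation.

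The main obstacle is the absolute-convergence argument for $J^\alpha$: even though the quadratic-growth bound is only local near the diagonal, one must separate the infinitely many small jumps (controlled by $[X^i, X^i]_{\tau_n}$) from the finitely many large jumps on each $[0, \tau_n]$, and check that the hypothesis on $\Phi^\alpha$ gives a usable constant for almost every $\omega$ after localization. Once this step is secured, the remaining identities are standard manipulations with the jump/covariation calculus for semimartingales.
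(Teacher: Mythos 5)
Your proposal is correct and follows essentially the same route as the paper: both split $Z^\alpha$ into the stochastic integral part and the pure-jump sum, reduce everything to showing the latter is a c\`adl\`ag pure-jump process of locally bounded variation, and then read off \refeqn{equation_gauge4}--\refeqn{equation_gauge3} from standard jump/bracket calculus and associativity of the It\^o integral. The only difference is that you spell out the large-jumps/small-jumps localization argument (controlled by $[X^i,X^i]_{\tau_n}$) which the paper merely cites as ``the standard argument used for proving It\^o formula''.
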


\noindent \begin{proof}%[Proof of Lemma \ref{lemma_geometrical1}]
Since $\int_0^t{H^{\alpha}_{i,s}dX^i_s}$ are semimartingales, we only need  to prove that\\ $\tilde{Z}^{\alpha} = \sum_{0 \leq s \leq t}\Phi^{\alpha}(s,\omega,X^1_{s_-},X^1_s,...,X^k_{s_-},X^k_s)$ is a c\acc{a}dl\acc{a}g process of bounded variation.\\
If $\tilde{Z}^{\alpha}$ are processes of bounded variation, then we can prove \refeqn{equation_gauge2}
since $\tilde{Z}^{\alpha}$ does not change the brackets $[Z^{\alpha},Z^{\beta}]^c$ (for the definition of $[~,~]^c$ see, e.\,g., \cite{Protter1990}). Furthermore, since $\tilde{Z}^{\alpha}$ is a sum of pure jumps processes, $\tilde{Z}^{\alpha}$  is a pure jump process. Then we get equations
\refeqn{equation_gauge4} and \refeqn{equation_gauge3} by using \cite[Chapter IV Theorem 8]{Protter1990}, the associative property of the It\^o integral (see \cite[Chapter IV Theorem 22]{Protter1990}), the fact that $\tilde{Z}^{\alpha}$ are pure jump processes of bounded variation and that the measures $d \tilde{Z}^{\alpha}$ are pure atomic (random) measures. The fact that $\tilde{Z}^{\alpha}$ is of bounded variation can be established by exploiting the standard argument used for proving It\^o formula (see, e.g., \cite[Chapter II, Section 7]{Protter1990}).
\hfill\end{proof}

\begin{remark}\label{remark_geometrical1}
Let $ \mathcal{K}$ be a topological space, $K \in \mathcal{K}$ be a locally bounded predictable process and $\tilde{\Phi}: \mathbb{R}_+ \times
\mathcal{K} \times \mathbb{R}^{2k} \rightarrow \mathbb{R}$ be a $C^2$ function in $\mathbb{R}^{2k}$ variables such that
$\tilde{\Phi}$ and all its derivatives are continuous in all their arguments. If
$\tilde{\Phi}(\cdot,\cdot,x^1,x^1,...,x^k,x^k)=\partial_{x'^i}(\tilde{\Phi})(\cdot,\cdot,x^1,x^1,...,x^k)=0$
 for $i=1,...k$, then $\Phi(t,\omega,...)=\tilde{\Phi}(t,K_t(\omega),...)$ satisfies the
hypotheses of Lemma \ref{lemma_geometrical1}.
\end{remark}

\noindent \begin{proof}[Proof of Theorem \ref{theorem_gauge1}]
We prove the theorem when $N,M$ admit a global coordinate system. The proof of the  general case can be obtained by using suitable embeddings and exploiting Lemma \ref{remark_extension}.\\
Let $x^i$, $z^{\alpha}$ 
be some global coordinate systems of $M,N$ respectively. By definition $\tilde{Z}$
is such that \normalsize
\begin{eqnarray*}
\tilde{Z}^{\alpha}_t-\tilde{Z}^{\alpha}_0&=&
\int_0^t{\partial_{z'^{\beta}}(\overline{\Psi}^{\Xi,\alpha}_{K'_s})(\tilde{Z}_{s_-},Z_{s_-},Z_{s_-})dZ^{\beta}_s}\\
&&+\frac{1}{2}\int_0^t{
\partial_{z'^{\beta}z'^{\gamma}}(\overline{\Psi}^{\Xi,\alpha}_{K'_s})(\tilde{Z}_{s_-},Z_{s_-},Z_{s_-})d[Z^{\beta},Z^{\gamma}]^c_s}+\\
&&+\sum_{0\leq s \leq t}\overline{\Psi}_{K'_s}^{\Xi,\alpha}(\tilde{Z}_{s_-},
Z_s,Z_{s_-})-\overline{\Psi}^{\Xi,\alpha}_{K'_s}(\tilde{Z}_{s_-},Z_{s_-},Z_{s_-})+\\
&&-\partial_{z'^{\beta}}(\overline{\Psi}^{\Xi,\alpha}_{K'_s})(\tilde{Z}_{s_-},Z_{s_-},Z_{s_-})\Delta Z^{\beta}_s,
\end{eqnarray*}
\normalsize
where, as usual, $\overline{\Psi}_{k'}(\tilde{z},z',z)=\Xi_{k'}(z'\cdot z^{-1}) \cdot \tilde{z}$. By the previous equation, Lemma
\ref{lemma_geometrical1} and Remark \ref{remark_geometrical1} we
obtain
\begin{eqnarray*}
[\tilde{Z}^{\alpha},\tilde{Z}^{\beta}]_t&=&\int_0^t{\partial_{z'^{\gamma}}
(\overline{\Psi}^{\Xi,\alpha}_{K'_s})(\tilde{Z}_{s_-},Z_{s_-},Z_{s_-})\partial_{z'^{\delta}}
(\overline{\Psi}^{\Xi,\beta}_{K'_s})(\tilde{Z}_{s_-},Z_{s_-},Z_{s_-})d[Z^{\gamma},Z^{\delta}]^c_s}\\
\Delta \tilde{Z}^{\alpha}_t&=&\overline{\Psi}^{\Xi,\alpha}_{K'_t}(\tilde{Z}_{t_-},
Z_t,Z_{t_-})-\overline{\Psi}^{\Xi,\alpha}_{K'_t}(\tilde{Z}_{t_-},Z_{t_-},Z_{t_-}).
\end{eqnarray*}
Therefore, since $X$ is a solution to the geometrical SDE $\Psi_{K_t}$ driven by $\tilde{Z}$, using Lemma
\ref{lemma_geometrical1} and Remark \ref{remark_geometrical1}, we have

$$\begin{array}{rcl}
dX^i_t&=&\partial_{\tilde{z}'^{\alpha}}(\overline{\Psi}^i_{K_t})(X_{t_-},\tilde{Z}_{t_-},\tilde{Z}_{t_-})d\tilde{Z}^{\alpha}_t+\frac{1}{2}
\partial_{\tilde{z}'^{\alpha}\tilde{z}'^{\beta}}(\overline{\Psi}^i_{K_t})(X_{t_-},\tilde{Z}_{t_-},\tilde{Z}_{t_-})\cdot \\

&&\cdot d[\tilde{Z}^{\alpha},\tilde{Z}^{\beta}]_t+\{\Psi^i_{K_t}(X_{t_-},\Delta \tilde{Z}_t)-\Psi^i_{K_t}(X_{t_-},1_N)+\\

&&-\partial_{\tilde{z}'^{\alpha}}(\overline{\Psi}^i_{K_t})(X_{t_-},\tilde{Z}_{t_-},\tilde{Z}_{t_-})\Delta \tilde{Z}^{\alpha}_t\}\\

\phantom{dX^i_t}&=&\partial_{\tilde{z}'^{\alpha}}(\overline{\Psi}^i_{K_t})(X_{t_-},\tilde{Z}_{t_-},\tilde{Z}_{t_-})\partial_{z'^{\beta}}
(\overline{\Psi}^{\Xi,\alpha}_{K'_t})
(\tilde{Z}_{t_-},Z_{t_-},Z_{t_-})dZ^{\beta}_t+\\

&&+\frac{1}{2}\partial_{\tilde{z}'^{\alpha}}(\overline{\Psi}^i_{K_t})(X_{t_-},\tilde{Z}_{t_-},\tilde{Z}_{t_-})
\partial_{z'^{\beta}z'^{\gamma}}(\overline{\Psi}^{\Xi,\alpha}_{K'_t})(\tilde{Z}_{t_-},Z_{t_-},Z_{t_-})\cdot\\
&&\cdot d[Z^{\beta},Z^{\gamma}]^c_t+\partial_{\tilde{z}'^{\alpha}}(\overline{\Psi}^i_{K_t})(X_{t_-},\tilde{Z}_{t_-},\tilde{Z}_{t_-})
\left[\overline{\Psi}_{K'_t}^{\Xi,\alpha}(\tilde{Z}_{t_-},Z_t,Z_{t_-})+\right.\\
&&\left.-\overline{\Psi}^{\Xi}_{K'_s}(\tilde{Z}_{t_-},Z_{t_-},Z_{t_-})-\partial_{z'^{\beta}}(\overline{\Psi}^{\Xi,\alpha}_{K'_t})(\tilde{Z}_{t_-},Z_{t_-},Z_{t_-})
\Delta Z^{\beta}_t\right]+\\
\phantom{dX^i_t}&\phantom{=}&+\frac{1}{2}\left(\partial_{\tilde{z}'^{\alpha}\tilde{z}'^{\delta}}(\overline{\Psi}^i_{K_t})(X_{t_-},\tilde{Z}_{t_-},\tilde{Z}_{t_-})
\partial_{z'^{\beta}}(\overline{\Psi}^{\Xi,\alpha}_{K'_t})(\tilde{Z}_{t_-},Z_{t_-},Z_{t_-})\cdot\right.\\

&&\left.\cdot\partial_{z'^{\beta}}(\overline{\Psi}^{\Xi,\delta}_{K'_t})(\tilde{Z}_{t_-},Z_{t_-},Z_{t_-})\right)d[Z^{\beta},Z^{\gamma}]_{t}+\left\{
\Psi^i(X_{t_-},\Xi_{G_t}(\Delta Z_t))+\right.\\
&&-\Psi^i(X_{t_-},1_N)-\partial_{\tilde{z}'^{\alpha}}(\overline{\Psi}^i_{K_t})(X_{t_-},\tilde{Z}_{t_-},\tilde{Z}_{t_-})[\overline{\Psi}_{K'_t}^{\Xi,\alpha}
(\tilde{Z}_{t_-}, Z_s,Z_{t_-})+\\
&&\left.-\overline{\Psi}_{K'_s}^{\Xi,\alpha}(\tilde{Z}_{t_-},Z_{t_-},Z_{t_-})]\right\}.
\end{array}
$$
%\end{eqnarray*}
\normalsize By the chain rule for derivatives and the fact that
$\overline{\Psi}^{\Xi,\alpha}_{K'_t}(\tilde{Z}_{s_-},Z_{s_-},Z_{s_-})=\tilde{Z}^{\alpha}_{s_-}$ we have %\small
%\begin{eqnarray*}
$$\begin{array}{c}
\left.\partial_{z'^{\beta}}(\overline{\Psi}^i_{K_s}(x,\overline{\Psi}^{\Xi}_{K'_s}(\tilde{z},z',z),\tilde{z})) \right|_{\stackrel{
x=X_{s_-},\tilde{z}=\tilde{Z}_{s_-}}{z=z'=Z_{s_-}}}=\\
=
\partial_{\tilde{z}'^{\alpha}}(\overline{\Psi}^i_{K_s})(X_{s_-},\tilde{Z}_{s_-},\tilde{Z}_{s_-})\partial_{z'^{\beta}}(\overline{\Psi}^{\Xi,\alpha}_{K'_s})
(\tilde{Z}_{s_-},Z_{s_-},Z_{s_-})\\
\left.\partial_{z'^{\beta}z'^{\gamma}}(\overline{\Psi}^i_{K_s}(x,\overline{\Psi}^{\Xi}_{K'_s}(\tilde{z},z',z),\tilde{z})) \right|_{\stackrel{
x=X_{s_-},\tilde{z}=\tilde{Z}_{s_-}}{z=z'=Z_{s_-}}}=\\
=\partial_{\tilde{z}'^{\alpha}}(\overline{\Psi}^i_{K_s})(X_{s_-},\tilde{Z}_{s_-},\tilde{Z}_{s_-})
\partial_{z'^{\beta}z'^{\gamma}}(\overline{\Psi}^{\Xi,\alpha}_{K'_s})(\tilde{Z}_{s_-},Z_{s_-},Z_{s_-})+\\
+\partial_{\tilde{z}'^{\alpha}\tilde{z}'^{\delta}}(\overline{\Psi}^i_{K_s})(X_{s_-},\tilde{Z}_{s_-},\tilde{Z}_{s_-})
\partial_{z'^{\beta}}(\overline{\Psi}^{\Xi,\alpha}_{K'_s})(\tilde{Z}_{s_-},Z_{s_-},Z_{s_-})\cdot\\
\cdot\partial_{z'^{\gamma}}(\overline{\Psi}^{\Xi,\delta}_{K'_s})(\tilde{Z}_{s_-},Z_{s_-},Z_{s_-})
\end{array}
$$%\end{eqnarray*}
\normalsize
Using the fact that
%\small
%\begin{eqnarray*}
$$\begin{array}{c}
\overline{\Psi}_k(x,\overline{\Psi}^{\Xi}_{k'}(\tilde{z},z',z),\tilde{z})=\Psi_k(x,(\Xi_{k'}(z' \cdot z^{-1})\cdot \tilde{z})\cdot \tilde{z}^{-1})=\\
=\hat{\Psi}_{k,k'}(x,z' \cdot z^{-1})=\Psi_k(x,\Xi_{k'}(z'\cdot z^{-1}))= \overline{\hat{\Psi}}_{k,k'}(x,z',z) \end{array}$$
%\end{eqnarray*} \normalsize
we obtain
\normalsize
\begin{eqnarray*}
X^i_t-X^i_0&=&\int_0^t{\partial_{z'^{\beta}}(\overline{\hat{\Psi}}^i_{K_s,K'_s})(X_{s_-},Z_{s_-},Z_{s_-})dZ^{\beta}_s}+\\
&&+\frac{1}{2}\int_0^t{
\partial_{z'^{\beta}z'^{\gamma}}(\overline{\hat{\Psi}}^i_{K_s,K'_s})(X_{s_-},Z_{s_-},Z_{s_-})d[Z^{\beta},Z^{\gamma}]_s}+\\
&&+\sum_{0\leq s \leq t}\overline{\hat{\Psi}}_{K_s,K'_s}^i(X_{s_-},Z_{s},Z_{s_-})-\hat{\Psi}_{K_s,K'_s}^i(X_{s_-},Z_{s_-},Z_{s_-})+\\
&&-
\partial_{z'^{\beta}}(\overline{\hat{\Psi}}^i_{K_s,K'_s})(X_{s_-},Z_{s_-},Z_{s_-})\Delta Z^{\beta}_s,
\end{eqnarray*}
\normalsize and so $dX_t=\hat{\Psi}_{K_t,K'_t}(dZ_t)$. ${}\hfill$ \end{proof}

\begin{corollary}\label{corollary_gauge1}
Suppose $\Xi_{k'}$ is invertible as a map from $N$ into itself and, for any fixed $k'$ in $\mathcal{K}'$, the inverse satisfies the same regularity properties of $\Xi_{k'}$. If $X$ is a solution to the geometrical SDE $\Psi_{K_t}$ driven by $Z$, then $X$ is a solution to
the canonical SDE defined by
$\hat{\Psi}_{k,k'}(x,z)=\Psi_k(x,\Xi^{-1}_{k'}(z))$.
\end{corollary}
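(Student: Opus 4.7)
The plan is to realize the corollary as a direct application of Theorem \ref{theorem_gauge1} combined with the uniqueness part of Theorem \ref{theorem_manifold1}. The implicit reading of the statement is that the new driving noise should be $\tilde{Z}$, the random transformation of $Z$ via $\Xi_{K'_t}$, so first I would make this explicit by letting $\tilde{Z}$ be the unique semimartingale with $\tilde{Z}_0=1_N$ solving $d\tilde{Z}_t=\Xi_{K'_t}(dZ_t)$; its existence as a semimartingale on $N$ is granted by Theorem \ref{theorem_manifold1}, with globality either coming from the standing assumption made after Proposition \ref{proposition_faithful} or handled by the usual localization.

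Using the hypothesis that $\Xi^{-1}_{k'}$ inherits the smoothness and continuity-in-$k'$ properties of $\Xi_{k'}$, together with $\Xi^{-1}_{k'}(1_N)=1_N$, the map $\hat{\Psi}_{k,k'}(x,z)=\Psi_k(x,\Xi^{-1}_{k'}(z))$ fits Definition \ref{definition_solution} with parameter space $\mathcal{K}\times\mathcal{K}'$ and the predictable locally bounded process $(K_t,K'_t)$. I would then let $X'$ be the unique local solution to $dX'_t=\hat{\Psi}_{K_t,K'_t}(d\tilde{Z}_t)$ with $X'_0=X_0$.

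The decisive step is to apply Theorem \ref{theorem_gauge1} to $X'$: since $d\tilde{Z}_t=\Xi_{K'_t}(dZ_t)$, the theorem yields that $X'$ is simultaneously a solution to the composed geometrical SDE driven by $Z$ whose defining map is
\[
\hat{\Psi}_{k,k'}(x,\Xi_{k'}(z))=\Psi_k\bigl(x,\Xi^{-1}_{k'}(\Xi_{k'}(z))\bigr)=\Psi_k(x,z).
\]
Hence $X'$ and $X$ are both solutions to the geometrical SDE $\Psi_{K_t}$ driven by $Z$ with the same initial condition, and uniqueness in Theorem \ref{theorem_manifold1} forces $X=X'$. This is precisely the assertion that $X$ solves $\hat{\Psi}_{K_t,K'_t}$ driven by $\tilde{Z}$.

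I do not expect any serious analytic obstacle: the whole argument is an application of the associativity content of Theorem \ref{theorem_gauge1}, and the only nontrivial verification is that $\hat{\Psi}$ is a legitimate geometrical SDE, which is exactly what the invertibility-with-regularity assumption on $\Xi_{k'}$ provides. The subtlest conceptual point is recognizing that the driving noise in the transformed SDE is $\tilde{Z}$, not $Z$, as dictated by the cancellation $\Xi^{-1}_{k'}\circ\Xi_{k'}=\mathrm{id}_N$.
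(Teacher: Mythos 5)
Your proof is correct. It rests on the same two ingredients as the paper's proof --- Theorem \ref{theorem_gauge1} and the cancellation $\Xi^{-1}_{k'}\circ\Xi_{k'}=\mathrm{id}_N$ --- but runs the reduction in the opposite direction. The paper first proves the inversion formula $dZ_t=\Xi^{-1}_{K'_t}(d\tilde Z_t)$: it defines $d\hat Z_t=\Xi^{-1}_{K'_t}(d\tilde Z_t)$, uses Theorem \ref{theorem_gauge1} to see that $\hat Z$ solves the identity SDE driven by $Z$, and identifies $\hat Z=Z$; it then applies Theorem \ref{theorem_gauge1} a second time, now viewing $Z$ as the $\Xi^{-1}_{K'_t}$-transform of $\tilde Z$, to conclude directly that the given $X$ solves $\hat{\Psi}_{K_t,K'_t}$ driven by $\tilde Z$. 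You instead introduce the auxiliary solution $X'$ of the target equation, push it forward with Theorem \ref{theorem_gauge1} to a solution of $\Psi_{K_t}$ driven by $Z$, and conclude $X'=X$ from the uniqueness in Theorem \ref{theorem_manifold1}. The trade-off: the paper needs uniqueness only for the noise equation on $N$ and never constructs an auxiliary process on $M$, whereas you need existence and uniqueness for the SDE on $M$ (so the identification is a priori valid only up to the stopping times of Theorem \ref{theorem_manifold1} and must be extended by the usual localization); in exchange, your argument applies Theorem \ref{theorem_gauge1} exactly in the form in which it is stated, with no swapping of the roles of $Z$ and $\tilde Z$. One point you glide over, which the paper also leaves tacit: the composed map delivered by Theorem \ref{theorem_gauge1} is a priori $\Psi_k\bigl(x,\Xi^{-1}_{k'}(\Xi_{k''}(z))\bigr)$ with an extra parameter $k''$, and it collapses to $\Psi_k(x,z)$ only on the diagonal $k'=k''$ along which the parameter process $(K_t,K'_t,K'_t)$ actually lives; since Definition \ref{definition_solution} evaluates the defining map only along the parameter process, this is harmless, but it is worth stating.
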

\begin{proof}
The proof is an application of Theorem \ref{theorem_gauge1} and of the fact that $dZ_t=\Xi^{-1}_{K'_t}(d\tilde{Z}_t)$. Indeed, defining
$d\hat{Z}_t=\Xi^{-1}_{K'^{-1}}(d\tilde{Z}_t)$, we have that $d\hat{Z}_t=\Xi^{-1}_{K'_t} \circ \Xi_{K'_t}
(dZ_t)=Id_N(dZ_t)=dZ_t$.
${}\hfill$ \end{proof}

\begin{remark}An easy consequence of Theorem \ref{theorem_gauge1} is the well known associative rule of It\^o stochastic integrals. In particular if we have
$$dX^i_t=C^i_{\alpha,t} d\tilde{Z}^{\alpha}_t, \ \ \ d\tilde{Z}^{\alpha}_t=B^{\alpha}_{\beta,t} dZ^{\beta},$$
where $C:\Omega \times \mathbb{R}_+ \rightarrow \matr(m,n)$ and $B: \Omega \times \mathbb{R}_+ \rightarrow \matr(n,n)$ are two predictable locally bounded processes, we have
$$dX^i_t=C^i_{\alpha,t}B^{\alpha}_{\beta,t} dZ^{\beta}=D^i_{\beta,t} dZ^{\beta}_t$$
where $D_t=C_t \cdot B_t$. The previous relation can be obtained considering $X$ as the solution to the geometrical SDE given by $\Psi_C(x,z)=x+C \cdot z$ and by the random transformation $\Xi_B=B \cdot z$. Indeed applying Theorem \ref{theorem_gauge1} we obtain that $\hat{\Psi}_{C,B}(x,z)=x+C \cdot B \cdot z$.
\end{remark}

\subsection{A comparison with other approaches}\label{subsection_comparison}

In this subsection we discuss the relations between geometrical SDEs introduced by Definition \ref{definition_solution} and other definitions of SDEs driven by processes with jumps on $\mathbb{R}^m$. In particular we consider Marcus-type SDEs (see for example in \cite{Protter1995,Marcus1981}) and L\'evy driven SDEs (see for example \cite{Applebaum2004,Kunita2004}). Our geometrical SDEs also include affine-type SDEs (see \cite[Chapter V]{Protter1990}), SDEs driven by a general random measure (see \cite[Chapter 5]{Bichteler2002}) and smooth iterated random functions (see \cite{Diaconis1999}).

\subsubsection{Marcus-type SDEs}

Let us take $M= \mathbb{R}^m$ and $N=\mathbb{R}^n$, and $n$ smooth vector fields $V_1,...,V_n$ on $M$, admitting flow for any time. Let $R: [0,1] \times N \rightarrow N$ be $C^1$ in the $[0,1]$-variable, smooth in $N$-variables and such that $R(0,z)=0$, $R(1,z)=z$ and $R(a,0)=0$. If we define $\Phi:[0,1] \times M \times N \rightarrow M$ as the unique solution to the following equations
$$\begin{array}{rcl}
\partial_a(\Phi^i(a,x,z))&=&\partial_a(R^{\alpha}(a,z)) V_{\alpha}^i(\Phi(a,x,z))\\
\Phi(0,x,z)&=&x,
\end{array}
$$
where $V_{\alpha}=V^i_{\alpha}(x)\partial_{x^i}$, and we consider $\Psi(x,z)=\Phi(1,x,z)$, the geometrical SDE defined by $\Psi$ is given by
%\begin{eqnarray*}
\begin{equation}\label{equation_marcus}
\begin{array}{rcl}
dX^i_t&=&V^i_{\alpha}(X_{t_-})dZ^{\alpha}_t+\frac{1}{4}(V_{\beta}(V^i_{\alpha})(X_{t_-})
+V_{\alpha}(V^i_{\beta})(X_{t_-}))d[Z^{\alpha},Z^{\beta}]_t+\\
&&+\{\Psi^i(X_{t_-},Z_t-Z_{t_-})-X^i_{t_-}-V^i_{\alpha}(X_{t_-})\Delta Z^{\alpha}_t\}.
\end{array}
\end{equation}
%\end{eqnarray*}
We call these SDEs \emph{Marcus-type SDEs} (some authors call them \emph{canonical SDEs}) since when $R(a,z)=a z$,  they are exactly the SDEs initially proposed by Marcus in \cite{Marcus1981} for semimartingales with finitely many jumps in any compact interval, which have been extended to the case of general real semimartingales in \cite{Protter1995}. This kind of equations are used to study stochastic flows on $\mathbb{R}^m$ (see \cite{Kunita1999}) and can be defined on manifolds and driven by infinite dimensional semimartingales (see \cite{Applebaum2001}). Furthermore, when $R$ is a more general class, they have been studied by Cohen (see \cite{Cohen1996} and see also \cite{Applebaum1997}) within the framework of stochastic analysis and by Friz and Zhang (see \cite{Friz2017} and also \cite{Friz2017(2)}) exploiting the more recent methods of rough paths theory. The Marcus-type SDEs have a very nice geometric property: if $\Sigma:M \rightarrow M$ is a diffeomorphism, $X$ solves the SDE \refeqn{equation_marcus} if and only if $\Sigma(X)$ solves the Marcus-type SDE defined by the same $R$ and by $\Sigma^*(V_1),...,\Sigma^*(V_n)$. Unfortunately this family of SDEs has two main problems. The first one is that they do not have a natural generalization allowing us to include the dependence on topological spaces $\mathcal{K}$ and stochastic processes on them. The second problem is that, although they are closed with respect to diffeomorphisms, they are not closed with respect to the random transformations $\Xi_k$. \\
Nevertheless since some generalizations of equation \refeqn{equation_marcus} play an important role in (possible non-It\^o) rough paths theory for jumps processes, we think that a generalization of our theory to these cases and to rough paths theory deserves a great attention and future investigations.

\subsubsection{Smooth SDEs driven by L\'evy processes}\label{subsubsection_Levy}

In this section we describe a particular form of SDEs driven by $\mathbb{R}^n$-valued L\'evy processes (see, e.g.,
\cite{Applebaum2004,Kunita2004}). The following discussion can be easily generalized to the case of general semimartingales and general random measures following \cite{Bichteler2002}. By definition, an $\mathbb{R}^n$-valued L\'evy process $(Z^1,...,Z^n)$ can be decomposed into the sum of Brownian
motions and compensated Poisson processes defined on $\mathbb{R}^n$. In particular, a L\'evy process on $\mathbb{R}^n$ can be identified by a
vector $b_0=(b_0^1,...,b_0^n) \in \mathbb{R}^n$, an $n \times n$ matrix $A_0^{\alpha\beta}$ (with real elements) and a positive $\sigma$-finite measure $\nu_0$
defined on $\mathbb{R}^n$ (called L\'evy measure, see, e.g., \cite{Applebaum2004,Sato1999}) such that
$$\int_{\mathbb{R}^n}{\frac{|z|^2}{1+|z|^2}\nu_0(dz)} < + \infty.$$
By  the L\'evy-It\^o decomposition, the triplet $(b,A,\nu)$ is such that there exist an $n$ dimensional Brownian motion $(W^1,...,W^n)$ and a
Poisson measure $P(dz,dt)$ defined on $\mathbb{R}^n$ such that \normalsize%\small
$$
\begin{array}{rcl}
Z^{\alpha}_t&=&b^{\alpha}_0 t+ C^{\alpha}_{\beta} W^{\beta}_t+\int_0^t{\int_{|z| \leq
1}{z^{\alpha}(P(dz,ds)-\nu_0(dz)ds)}}+\\
&&+\int_0^t{\int_{|z|>1}{z^{\alpha}P(dz,ds)}}. \end{array}$$ \normalsize where
$A^{\alpha\beta}_0=\sum_{\gamma}C^{\alpha}_{\gamma}C^{\beta}_{\gamma}$. Henceforth we suppose for simplicity that $b^1=1$ and $b^{\alpha}_0=0$
for $\alpha>1$, that there exists
 $n_1$ such that $A^{\alpha\beta}_0=\delta^{\alpha\beta}$ for $1< \alpha ,\beta \leq n_1$ and
 $A^{\alpha\beta}_0=0$ for $\alpha$ or $\beta$ in $\{1,n_1+1,...,n\}$, and finally  that
 $\int_0^t{\int_{|z| \leq 1}{z^{\alpha}(P(dz,ds)-\nu_0(dz)ds)}}=0$ and $\int_0^t{\int_{|z|>1}{z^{\alpha}P(dz,ds)}}=0$ for $\alpha \leq n_1$. \\
Consider a vector field $\mu$ on $M$, a set of $n_1-1$ vector
fields $\sigma=(\sigma_2,...,\sigma_{n_{1}})$ on $M$ and a smooth
(both in $x $ and $z$) function $F:M \times \mathbb{R}^{n-n_1}
\rightarrow \mathbb{R}^m$ such that $F(x,0)=0$. We say that a
semimartingale $X \in M$ is a solution to the smooth SDE
$(\mu,\sigma,F)$ driven by the $\mathbb{R}^n$ L\'evy process
$(Z^1,...,Z^n)$ if
\begin{eqnarray*}
X^i_t-X^i_0&=&\int_0^t{\mu^i(X_{s_-})dZ^1_s}+\int_0^t{\sum_{\alpha=2}^{n_1}\sigma^i_{\alpha}(X_{s_-})dZ_s^{\alpha}}+\\
&&+\int_0^t{\int_{\mathbb{R}^{n-n_1}}{F^i(X_{s_-},z)(P(dz,ds)-I_{|z|\leq 1}\nu_0(dz)ds)}},
\end{eqnarray*}
where $I_{|z|\leq 1}$ is the indicator function of the set
$\{|z|\leq 1\}\subset \mathbb{R}^{n-n_1}$. Define the function
$$\overline{\Psi}^i(x,z',z)=x^i+\tilde{\mu}^i(x)(z'^1-z^1)+\sigma^i_{\alpha}(x)(z'^{\alpha}-z^{\alpha})+F^i(x,z'-z),$$
where
$$\tilde{\mu}^i(x)=\mu^i(x)-\int_{|z|\leq 1}{(F^i(x,z)-\partial_{z^{\alpha}}(F^i)(x,z)z^{\alpha})\nu_0(dz)}.$$
It is easy to see that any solution $X$ to the smooth SDE
$(\mu,\sigma,F)$ driven by the L\'evy process $(Z^1,...,Z^n)$ is
also a solution to the geometrical SDE $\overline{\Psi}$ driven by
the $\mathbb{R}^n$ semimartingale $(Z^1,...,Z^n)$ and conversely.\\
In the theory of SDEs driven by $\mathbb{R}^n$-valued L\'evy processes the usual assumption  is
that $F$ is Lipschitz in $x$ and measurable in $z$. Our assumption on smoothness of $F$ in both $x,z$ is thus  a stronger requirement. For this reason
we say that $(\mu,\sigma,F)$ is a \emph{smooth SDE} driven by a L\'evy process.

\section{Gauge symmetries of semimartingales on Lie groups}\label{section_gauge}

It is well known that considering  a Brownian
motion $Z$ on $\mathbb{R}^n$ and a process $B_t: \Omega \times [0,T]
\rightarrow O(n)$  predictable  with respect to the natural filtration of $Z$ and with
 values in the Lie group $ O(n)$ of orthogonal matrices,  the process defined by
\begin{equation}\label{equation_gauge1}
Z'^{\alpha}_t=\int_0^t{B^{\alpha}_{\beta,s}dZ^{\beta}_s}
\end{equation}
is a new $n$ dimensional Brownian motion (see \cite{DMU1}).\\
We propose a generalization of this property to the case of a c\acc{a}dl\acc{a}g semimartingale $Z$  on a Lie group $N$ (see
\cite{Privault2012} for a similar result about Poisson measures).\\

\noindent In the following we suppose that $\Xi:N \times \mathcal{G} \rightarrow N$ is an action of the topological group $\mathcal{G}$, i.e. $\Xi_g$ satisfies all the previous hypotheses where the topological space $\mathcal{K}$ is replaced by the topological group $\mathcal{G}$ and furthermore, for any $g_1,g_2 \in \mathcal{G}$, $\Xi_{g_1} \circ \Xi_{g_2} =\Xi_{g_1 \cdot g_2}$.

\begin{definition}\label{definition_gauge}
Let $Z$ be a semimartingale on a Lie group $N$ with respect to a given filtration $\mathcal{F}_t$. Given a topological group $\mathcal{G}$, %and an element $g \in \mathcal{G}$
 we say that $Z$ admits $\mathcal{G}$, with action $\Xi_g$ and with respect to the filtration $\mathcal{F}_t$,  as \emph{gauge symmetry group} if, for any $\mathcal{F}_t$-predictable locally bounded process $G_t$ taking values in $\mathcal{G}$, the semimartingale $\tilde{Z}$ solution to the equation $d\tilde{Z}_t=\Xi_{G_t}(dZ_t)$ has the same law as $Z$.
\end{definition}

\noindent In the following we consider the filtration $\mathcal{F}_t$ of the  probability space $(\Omega,\mathcal{F},\mathbb{P})$ as given and we omit it when it is not strictly necessary to specify it.

\begin{remark}
We require that $\mathcal{G}$ is a topological group and $\Xi_g$ is a group action, so that the set of random maps of the form $Z \longmapsto \Xi_{G_t}(dZ)$, for a generic predictable locally bounded process $G$, forms a group of random transformations. Under the previous conditions this group property is an easy consequence of Theorem \ref{theorem_gauge1} and Corollary \ref{corollary_gauge1}. On the other hand, we can modify Definition \ref{definition_gauge} requiring a weaker condition, i.e. $\mathcal{G}=\mathcal{K}$ to be a general topological space or a semigroup. The assumption that $\mathcal{G}$ is a group is essentially based on the idea that symmetries should be invertible and closed under composition.
\end{remark}

\noindent There are two main reasons for the choice of the name \emph{gauge symmetry group}. If we consider the space $\mathfrak{G}$ of locally bounded predictable functions $G_{\cdot}:\Omega \times \mathbb{R}_+ \rightarrow \mathcal{G}$, $\mathfrak{G}$ is a group with respect to the pointwise composition $(G \cdot G')_t(\omega)=G_t(\omega) \cdot G'_t(\omega)$. The set $\mathfrak{G}$ with this composition can be interpreted as the set of gauge transformations of the trivial principal bundle $(\Omega \times \mathbb{R}_+) \times \mathcal{G}$. Furthermore, by Theorem \ref{theorem_gauge1}, the map $Z_t \times G_t \rightarrow \Xi_{G_t}(dZ_t)$ is an action of the group $\mathfrak{G}$ on the space of semimartingales defined on $N$. This resembles the idea of (local) gauge transformations and (local) gauge symmetries in field theory.\\
A deeper similarity between field theory and gauge symmetries in relation with Markovian (in $X$) SDEs is based on the idea of weak solution to a SDE. If we fix the law $\mathbb{P}^Z$ of a semimartingale $Z$ on the Lie group $N$, we say that $X$ is \emph{a weak solution to the geometrical SDE $\Psi$} if there exists a semimartingale $Z'$ with the law $\mathbb{P}^Z$ taking values on $N$ such that $X$ is a solution to the SDE $\Psi$ driven by $Z'$. This definition, inspired by \cite[Definition 5.5.2]{Bichteler2002}, is a natural generalization of the case with a driving Brownian-motion, and can be extended from geometrical SDEs to more general situations.\\

\begin{remark}
The strong existence results proved in Theorem \ref{theorem_manifold1} and the coincidence between strong and weak solutions allow us to choose $Z'$ so that $X$ is measurable with respect to the natural filtration of $Z'$. \\
Obviously this is no more true for the general non-smooth case (see Tanaka counter-example  \cite[Chapter 5, Section 5.5]{Bichteler2002}).
\end{remark}

\noindent Let $\mathcal{G}$ be a (finite dimensional) gauge symmetry Lie group for the semimartingale $Z$ with smooth action $\Xi_g$ with respect to its natural filtration. If $B:M \rightarrow \mathcal{G}$ is a smooth function and  $X$ is a weak solution to the geometrical SDE $\Psi(x,z)$, then $X$ is also a weak solution to the SDE $\Psi(x,\Xi_{B(x)}(z))$. Therefore there is a whole family of SDEs $\Psi$ with the same set of weak solutions which are related by transformations of the form $\Xi_{B(x)}$. \\
This fact is well known in the Brownian case, where with the same martingale problem it is possible to associate many different SDEs related by a rotation of the diffusion coefficient. With the introduction of the concept of gauge symmetry we generalize this phenomenon to gauge symmetric semimartingales. \\
The situation is similar to the one encountered in classical electrodynamics, where with the same electro-magnetic field it is possible to associate many different electro-magnetic scalar and vector potentials related by a gauge transformation. Furthermore this approach allows us to relate the gauge symmetry group with the (local) gauge transformation on the trivial bundle $M \times \mathcal{G}$ (see \cite{DMU1} for the Brownian motion case). \\

\noindent The existence of gauge symmetries is important in the study of the \emph{symmetries of SDEs}, i.e. transformations changing a solution to an SDE into another solution to the same SDE. Since we have two notions of solution to a given SDE (strong and weak solution) we have two notions of symmetries for an SDE: \emph{strong symmetries}, leaving invariant the set of strong solutions (i.e. not changing the driving process) and \emph{weak symmetries}, leaving invariant the set of weak solutions. Characterizing the set of strong symmetries is a simple task since the SDE must be transformed into itself. On the other hand, when the driving semimartingale admits a gauge symmetry group, there are many different SDEs admitting the same set of weak solutions, and a weak symmetry may not transform the SDE into itself.\\
The relationship between strong and weak symmetries of a Brownian-motion driven SDE and the gauge symmetries with respect to random rotations of Brownian motion was pointed out for the first time in \cite{DMU1}. The study of weak symmetries of SDEs driven by general semimartingales admitting gauge symmetry groups will be the subject of a future paper.\\

\noindent The presence of gauge symmetries of semimartingales has also important consequences for the study of the geometry of stochastic processes defined on manifolds. We sketch here an example. A well known method for introducing Brownian motion on a Riemannian manifold $M$ is to project onto the manifold $M$ the solution to a SDE defined on the bundle of orthonormal frames $O(M)$ on $M$ (the solution to the SDE on $O(M)$, called stochastic development of the Brownian motion on $M$, was first proposed by Eells and Elworthy, see \cite{Elworthy1982}, see also the generalization of Emery to general continuous semimartingales provided in \cite[Chapter VIII]{Emery1989}). This construction was generalized to the case of rotation-invariant L\'evy processes in \cite{Applebaum2000}. It is important to note the central role of rotation invariance for Brownian motion and for the L\'evy processes in proving that the projected process from $O(M)$ onto $M$ is Markovian (indeed in \cite{Applebaum2000} it is explicitly acknowledge that for non-rotation-invariant L\'evy processes this fact is no necessarily true anymore). This phenomenon, once explained in terms of gauge symmetries, can be generalized to the case of non-Markovian-driving processes.\\
Indeed, the stochastic development defined on $O(M)$ can be related to a Marcus-type SDE $\Psi(x,z)$ (where $x \in O(M)$ and $z \in N=\mathbb{R}^m$). The map $\Psi$ cannot be reduced to a map $\hat{\Psi}:M \times N \rightarrow M$, i.e. a map $\pi(\Psi(x,z))=\hat{\Psi}(\pi(x),z)$ where $\pi:O(M) \rightarrow M$ is the natural projection of $O(M)$ on $M$. This means that the SDE $\Psi$ driven by $Z$ on $O(M)$ cannot be reduced to a SDE $\hat{\Psi}$ driven by $Z$ on $M$. On the other hand the existence of this kind of reduction is fundamental, since the presence of $\hat{\Psi}$ would imply the Markovianity of the projection $\Pi(X)$ of the process $X$ on $M$. If $Z \in \mathbb{R}^m$ is invariant with respect to random rotations $\Xi_B(z)=B \cdot z$ (where $B \in O(m)$), the choice of an orthonormal frame on $M$\footnote{The existence of a smooth orthonormal frame is equivalent to the triviality of the tangent bundle $TM$. If $TM$ is non-trivial, then there exist only measurable orthonormal frames which can be used if we apply the generalization of geometrical SDEs proposed in Remark \ref{remark_definition}.} induces a map $B:O(M) \rightarrow O(m)$ such that the SDE $\Psi(x,\Xi_{B(x)}(z))=\Psi(x,B(x) \cdot z)$ can be reduced to a SDE $\hat{\Psi}(\tilde{x},z)$ on $M$. Since, as proved in Corollary \ref{corollary_Levy}, rotation-invariant L\'evy processes are also invariant with respect to random rotations, the previous reasoning also applies to Brownian motion and the L\'evy processes considered in \cite{Applebaum2000}. \\

\noindent We conclude this section with an explicit example showing how the knowledge of a gauge symmetry of a semimartingale provides a useful tool in order to simplify the corresponding SDE. \\
Given  the  SDE $\Psi(x,z)=x+xf(|x|^2)z_0+z$ in $\mathbb{R}^2$ or, more explicitly,
\begin{eqnarray*}
dX^1_t&=&X^1_{t_-}f(R_{t_-})dt+dZ^1_{t_-}\\
dX^2_t&=&X^2_{t_-}f(R_{t_-})dt+dZ^2_{t_-}
\end{eqnarray*}
where $f:\mathbb{R}_+ \rightarrow \mathbb{R}$ is a smooth function and $R_t=(X^1_t)^2+(X^2_t)^2$, it
is easy to prove that (for general $f$) $\Psi(x,z)$  does not admit any strong symmetry $\Phi$. Indeed, in general, we cannot find a smooth function $\Phi:\mathbb{R}^2\rightarrow \mathbb{R}^2$ such that $\Phi(\Psi(\Phi^{-1}(x),z))=\Psi(x,z)$ for any $(x,z)\in\mathbb{R}^4$. \\
On the other hand, if $Z$ admits $O(2)$ (with its natural action) as gauge symmetry
group, $\Psi$ admits the rotation
$$\Phi_a(x)=\left( \begin{array}{cc} \cos(a) & -\sin(a)\\
\sin(a) & \cos(a)
\end{array} \right) \cdot \left( \begin{array}{c} x^1\\
x^2
\end{array} \right)
 $$ as weak symmetries. Indeed, if
$$B_a=\left( \begin{array}{cc} \cos(a) & -\sin(a)\\
\sin(a) & \cos(a)
\end{array} \right), $$
 we have that $(\Phi_a,B_a)$ are weak symmetries (see \cite{DMU1} for a precise definition) and
\begin{eqnarray*}
dX'_t&=&d\Phi_a(X)=B_a \cdot dX_t\\
&=&f(R_{t_-}) B_a\cdot  X_{t_-} dt + B_a \cdot  dZ_t\\
&=&f(R'_{t_-})X'_{t_-} dt+ dZ'_{t_-}.
\end{eqnarray*}
We remark that, in order to have  the previous symmetry, we do not need $Z$ to be invariant with respect to random rotations but only with respect to deterministic rotations.
On the other hand, the notion of gauge invariance is a key tool if we aim at generalizing the reduction and reconstruction  techniques from the deterministic to  the stochastic framework. \\
Indeed, rewriting the equation in polar coordinates, we obtain
\begin{equation}\label{equation_radius}
dR_t=2R_{t_-}f(R_{t_-})dt+2\sqrt{R_{t_{-}}} (\cos(\Theta_{t_-})dZ^1_{t}+\sin(\Theta_{t_-})dZ^2_t)+ d\mathfrak{Z}_t,
\end{equation}
where $\mathfrak{Z}_t=[Z^1,Z^1]_t+[Z^2,Z^2]_t$. It is immediate to check that equation \eqref{equation_radius} is not independent of the angle $\Theta_t$ as expected for a rotationally invariant equation. On the other hand, if $Z$ admits rotations as gauge symmetry group, we can define the following semimartingale
\begin{eqnarray*}
Z'^1_t&=&\int_0^t{ \frac{X^1_{s_-}}{\sqrt{(X^1_{s_-})^2+(X^2_{s_-})^2}} dZ^1_s + \frac{X^2_{s_-}}{\sqrt{(X^1_{s_-})^2+(X^2_{s_-})^2}} dZ^2_s}\\
Z'^2_{t}&=&\int_0^t{-\frac{X^2_{s_-}}{\sqrt{(X^1_{s_-})^2+(X^2_{s_-})^2}} dZ^1_s+
\frac{X^1_{s_-}}{\sqrt{(X^1_{s_-})^2+(X^2_{s_-})^2}} dZ^2_s}.
\end{eqnarray*}
Since $Z$ is invariant with respect to random rotations then $Z'$ has the same law of $Z$. Using the previous change of driving semimartingale, we have that $X_t \in \mathbb{R}^2$ solves the following equation
\begin{eqnarray*}
dX^1_{t_-}&=&X^1_{t_-}f(R_{t_-})dt+\frac{X^1_{t_-}}{\sqrt{(X^1_{t_-})^2+(X^2_{t_-})^2}} dZ'^1_t - \frac{X^2_{t_-}}{\sqrt{(X^1_{t_-})^2+(X^2_{t_-})^2}} dZ'^2_t\\
dX^2_{t_-}&=&X^2_{t_-}f(R_{t_-})dt+\frac{X^2_{t_-}}{\sqrt{(X^1_{t_-})^2+(X^2_{t_-})^2}} dZ'^1_t+
\frac{X^1_{t_-}}{\sqrt{(X^1_{t_-})^2+(X^2_{t_-})^2}} dZ'^2_t.
\end{eqnarray*}
If we introduce polar coordinates we obtain
$$dR_t=2R_{t_-}f(R_{t_-})dt+2\sqrt{R_{t_{-}}} dZ'^1_t+ d\mathfrak{Z}'_t$$
where $\mathfrak{Z}'_t = [Z'^1,Z'^1]_t+[Z'^2,Z'^2]_t$. It is important to note that we can perform the previous reduction using polar coordinates only if $Z$ is invariant with respect to random rotations and not just with respect to deterministic rotations. Finally we remark that, if $Z$ is a Brownian motion and $f=0$, the equation for $R_t$ reduces to the usual equation for squared Bessel process.

\section{Gauge symmetries and semimartingales characteristics}\label{section_characteristics}

\subsection{Characteristics of a Lie group valued semimartingale}

In this section we  extend the well known concept of semimartingale characteristics  from the $\mathbb{R}^n$ setting
to the case of a semimartingale defined on a general finite dimensional Lie group $N$. \\
Fixing $n$ generators $Y_1,...,Y_n$ of right-invariant vector fields on $N$, we introduce a set of functions $h^1,...,h^n$ (called \emph{truncated functions related to $Y_1,...,Y_n$}) which are measurable, bounded, smooth in a neighbourhood of
the identity $1_N$, with compact support and such that $h^{\alpha}(1_N)=0$ and $Y_{\alpha}(h^{\beta})(1_N)=\delta^{\beta}_{\alpha}$ (the existence of these functions is proved, for example, in
\cite{Hunt1956} and they can be chosen to be equal to a set of canonical coordinates in a neighbourhood of $1_N$). Generalizing Jacod and Shiryaev \cite[Chapter II Definition 2.6]{Jacod2003} we give the following

\begin{definition}\label{definition_characteristic}
Let $b$ be a predictable semimartingale of bounded variation on $\mathbb{R}^n$ and let $A$ be a predictable continuous semimartingale taking values
in the set of semidefinite positive $n \times n$ matrices. Furthermore, let  $\nu$ be a predictable random measure defined on $\mathbb{R}_+ \times N$.  If
 $Z$ is a semimartingale on a Lie group $N$, we say that $Z$ has characteristics $(b,A,\nu)$ with respect to $Y_1,...,Y_n$ and their truncated functions $h^1,...,h^n$ if,
for any smooth bounded functions $f,g \in \cinf(N)$ and for any smooth and bounded function $p$ which is identically $0$ in a neighbourhood of
$1_N$, we have that
\begin{eqnarray}
&\sum_{0 \leq s \leq t}p(\Delta Z_s)-\int_{0}^t{\int_N{p(z')\nu(ds,dz')}},&\label{equation_characteristic2}\\
&[f(Z),g(Z)]^c_t-g(Z_0)f(Z_0)-\int_0^t{Y_{\alpha}(f)(Z_{s_-})Y_{\beta}(g)(Z_{s_-})dA^{\alpha\beta}_s},&\label{equation_charcteristic3}\\
&\begin{array}{c}
f(Z_t)-f(Z_0)-\int_0^t{Y_{\alpha}(f)(Z_{s_-})db^{\beta}_s}-\frac{1}{2}\int_0^t{Y_{\alpha}(Y_{\beta}(f))(Z_{s_-})dA^{\alpha\beta}_s}+\\
-\sum_{0 \leq s \leq t}(f(Z_s)-f(Z_{s_-})-h^{\alpha}(\Delta
Z_s)Y_{\alpha}(f)(Z_{s_-})),
\end{array}&\label{equation_characteristic4}
\end{eqnarray}
 where $\Delta Z_t=Z_t \cdot Z_{t_-}^{-1}$, are local martingales.
\end{definition}

\begin{remark}
We note that condition
\refeqn{equation_charcteristic3} is redundant, because it can be
deduced from  \refeqn{equation_characteristic2} and
\refeqn{equation_characteristic4}.
\end{remark}

\noindent The following theorem states that any semimartingale $Z$ defined on a Lie group $N$ admits  essentially a unique  characteristic triplet $(b,A,\nu)$.

\begin{theorem}\label{theorem_characteristic1}
If $Z$ is a semimartingale on a Lie group $N$, then $Z$ admits a
characteristic triplet $(b,A,\nu)$ with respect to $Y_1,...,Y_n$ and
$h^1,...h^n$, which is  unique up to
$\mathbb{P}$ null sets.
\end{theorem}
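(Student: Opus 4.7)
The plan is to reduce the Lie-group statement to the classical real-valued result of Jacod--Shiryaev via a Whitney embedding. Fix a proper embedding $S: N \hookrightarrow \mathbb{R}^{k_N}$ and set $\tilde{Z} := S(Z)$. Since each coordinate $S^i$ is smooth, It\^o's formula shows that $\tilde{Z}$ is a c\acc{a}dl\acc{a}g semimartingale on $\mathbb{R}^{k_N}$, so by the classical theory it admits characteristics $(\tilde{b}, \tilde{A}, \tilde{\nu})$ relative to any chosen $\mathbb{R}^{k_N}$-truncation $\tilde{h}$, uniquely up to $\mathbb{P}$-null sets. From this triplet I would extract $(b,A,\nu)$ on $N$ by exploiting the formulas of Lemma \ref{remark_extension}, which translate Euclidean derivatives of extensions $\tilde{f}$ into right-invariant derivatives $Y_{\alpha}(f)$ along the submanifold $S(N)$.

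For existence, I would build the three objects separately. The measure $\nu$ is defined directly as the predictable compensator of the integer-valued random measure $\mu^Z(ds,dz') = \sum_s I_{\{\Delta Z_s \neq 1_N\}}\delta_{(s,\Delta Z_s)}$ on $\mathbb{R}_+ \times N$; its existence is the standard predictable-projection theorem for optional integer-valued measures, and the defining property is exactly \refeqn{equation_characteristic2}. For the continuous part, I would set $A^{\alpha\beta}_t := \int_0^t \tilde{P}^{\alpha}_i(Z_{s_-}) \tilde{P}^{\beta}_j(Z_{s_-}) \, d\tilde{A}^{ij}_s$: a direct computation using \refeqn{equation_extension1} on extensions of $f$ and $g$ gives $d[f(Z),g(Z)]^c_s = Y_{\alpha}(f)(Z_{s_-}) Y_{\beta}(g)(Z_{s_-}) \, dA^{\alpha\beta}_s$, and hence \refeqn{equation_charcteristic3}. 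Finally, $b$ is defined by compensation: plug $f = h^\alpha$ (extended smoothly and boundedly to all of $N$) into the classical canonical decomposition applied to the corresponding extension $\tilde{h}^\alpha$ of Lemma \ref{remark_extension}, and read off $db^\alpha$ as the unique predictable finite-variation term making \refeqn{equation_characteristic4} a local martingale for $f = h^\alpha$. One then verifies \refeqn{equation_characteristic4} for general smooth $f$ by substituting an extension $\tilde{f}$ into the $\mathbb{R}^{k_N}$ decomposition and converting all derivatives and jump terms to Lie-group form via \refeqn{equation_extension1}--\refeqn{equation_extension2}.

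For uniqueness, if $(b,A,\nu)$ and $(b',A',\nu')$ both satisfy Definition \ref{definition_characteristic}, taking differences in \refeqn{equation_characteristic2}, \refeqn{equation_charcteristic3}, \refeqn{equation_characteristic4} yields processes that are simultaneously local martingales and predictable with either continuous or bounded-variation paths, hence identically zero. Letting $p$ range over bounded smooth functions vanishing near $1_N$ forces $\nu = \nu'$ by a monotone-class argument; choosing $(f,g)$ whose right-invariant derivatives $Y_{\alpha}(f), Y_{\beta}(g)$ span all coordinate directions at a given point isolates each entry and forces $A = A'$; and then letting $f$ run through the $h^{\alpha}$ forces $b = b'$.

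The main obstacle is the mismatch between the two truncations: on $\mathbb{R}^{k_N}$ the jump piece of the classical decomposition features $\tilde{h}^i(\Delta \tilde{Z})\partial_{s^i}\tilde{f}(\tilde{Z}_{-})$, whereas \refeqn{equation_characteristic4} demands $h^{\alpha}(\Delta Z) Y_{\alpha}(f)(Z_{-})$. The discrepancy $\partial_{s^i}\tilde{f}(\tilde{Z}_-)\tilde{h}^i(\Delta \tilde{Z}) - Y_{\alpha}(f)(Z_-) h^{\alpha}(\Delta Z)$ is absolutely summable along the jumps but non-trivial, and its predictable compensator must be absorbed into the drift $b$. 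Carefully tracking these correction terms, and checking that the resulting $b$ is genuinely predictable and independent of the choices of embedding $S$ and of the extensions $\tilde{h}^i,\tilde{f}$, is where the bulk of the technical work lies.
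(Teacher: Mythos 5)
Your outline follows the same basic strategy as the paper (embed $N$ into $\mathbb{R}^{k_N}$, take $\nu$ as the compensator of the jump measure $\mu^Z$, pull the continuous quadratic characteristic back through the pseudoinverse $\tilde{P}$ of $P=(Y_\alpha(S^i))$, and prove uniqueness by the standard bounded-variation-martingale argument), and those parts are fine. The genuine divergence is in the construction of $b$, and it is exactly at the point you flag as ``the main obstacle'' that your route stalls: by first invoking the classical Jacod--Shiryaev characteristics of $S(Z)$ with a Euclidean truncation $\tilde h$, you are forced to compute the predictable compensator of the discrepancy $\partial_{s^i}\tilde f(\tilde Z_-)\tilde h^i(\Delta\tilde Z)-Y_\alpha(f)(Z_-)h^\alpha(\Delta Z)$ and absorb it into the drift, and you leave this unresolved. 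The paper never introduces a Euclidean truncation at all. It chooses the embedding $S$ to be Riemannian with respect to a left-invariant metric (Nash), so that the $Y_\alpha(S^i)$ are \emph{bounded}, and then compensates the jumps of $Z^i=S^i(Z)$ directly with the Lie-group truncation, setting $\hat Z^i_t=Z^i_t-\sum_{s\le t}\bigl(\Delta Z^i_s-h^\alpha(\Delta Z_s)Y_\alpha(S^i)(Z_{s_-})\bigr)$. Because $h^\alpha$ and $Y_\alpha(S^i)$ are bounded, $\hat Z^i$ has bounded jumps, hence is a special semimartingale, and its canonical decomposition $\hat Z^i=B^i+M^{i,c}+M^{i,d}$ yields $b$ and $A$ in closed form (with the curvature correction $Y_\beta(\tilde P^\alpha_i)\tilde P^\beta_j\,d[M^{i,c},M^{j,c}]$ coming from \refeqn{equation_extension2}); the truncation-mismatch compensator you worry about simply never appears. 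So your plan is not wrong, but to complete it you would either have to carry out the compensator computation you defer, or adopt the paper's device of truncating in the group variable from the outset; note also that a generic Whitney embedding does not guarantee boundedness of $Y_\alpha(S^i)$, which is what makes the special-semimartingale step work.
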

\begin{proof}
We first prove the existence.
Given a  semimartingale $Z$ on $N$, we can associate with $Z$ a unique random
measure on $N$ given by
$$\mu^Z(\omega,dt,dz)=\sum_{s \geq 0}I_{\Delta Z_s \not =1_N}\delta_{(s,\Delta Z_s(\omega))}(dt,dz),$$
where $\delta_a$ is the Dirac delta with mass in $a \in \mathbb{R}_+ \times N$. The random measure
$\mu^Z$ is an integer-valued random measure (see, e.g., \cite[Chapter II, Proposition 1.16]{Jacod2003}),
hence there exists a unique non-negative predictable random measure $\mu^{Z,p}$, which is the
compensator of $\mu^Z$ i.e. such that for any predictable random function $h$ vanishing in a neighbourhood of $1_N$, $\int_0^t{\int_N{h(z')\mu^Z(ds,dz')}}-\int_0^t{\int_N{h(z')\mu^{Z,p}(ds,dz')}}$ is a martingale, see \cite[Chapter II Theorem 1.8]{Jacod2003}, and so we can take $\nu=\mu^{Z,p}$.\\
In order to prove the existence of processes $b^{\alpha},A^{\alpha\beta}$ we  introduce a Riemannian embedding $S:N \rightarrow
\mathbb{R}^{k_N}$ with respect to a left invariant metric on $N$ (the existence of such an embedding for general finite dimensional Riemannian manifolds is provided by the Nash theorem, see for example \cite{Han2006}). Put $Z^{i}=S^i(Z)$, where $S=(S^1,...S^{k_N})$, and write
$$\hat{Z}^i_t=Z^i_t-\sum_{0 \leq s \leq t}\left(\Delta Z^i_s- h^{\alpha}(\Delta Z_s)
Y_{\alpha}(S^i)(Z_{s_-})\right).$$
Since $S$ is Riemannian, the norms of  $S_*(Y_{\alpha})(x)$
are constant and so $Y_{\alpha}(S^i)$ are bounded. Because of
$$\Delta \hat{Z}^i_t= h^{\alpha}(\Delta Z_t)Y_{\alpha}(S^i)(Z_{t_-}),$$
and $h^{\alpha}$  being  bounded, $\hat{Z}^i$ have bounded jumps and
so they are special semimartingales (see \cite[Chapter III Theorem 31]{Protter1990}). By definition of special semimartingale (see \cite[Chapter III Section 8]{Protter1990}), the processes
$\hat{Z}^i$ can be decomposed in a unique way as
$$\hat{Z}^i=B^i+M^{i,c}+M^{i,d},$$
where $B^i$ is a predictable process of bounded variation, $M^{i,c}$ is a continuous local martingale and $M^{i,d}$ is a purely
discontinuous local martingale. \\

\noindent Therefore, we can define
\begin{eqnarray*}
b^{\alpha}_t&=&\int_0^t{\tilde{P}^{\alpha}_i(Z_{s_-})dB^i_s+Y_{\beta}(\tilde{P}^{\alpha}_i)(Z_{s_-})\tilde{P}^{\beta}_j(Z_{s_-})d[M^{i,c},M^{j,c}]_s}\\
A^{\alpha\beta}_t&=&\int_0^t{\tilde{P}^{\alpha}_i(Z_{s_-})\tilde{P}^{\beta}_j(Z_{s_-})d[M^{i,c},M^{j,c}]_s},
\end{eqnarray*}
where $\tilde{P}$ is the pseudo inverse of the matrix $P=(Y_{\alpha}(S^i))$.
Given $f,g \in \cinf(N)$ let us consider two extensions $\tilde{f},\tilde{g}$ on $\mathbb{R}^{k_N}$ of $f,g \in \cinf(N)$ as in Lemma \ref{remark_extension}:
by It\^o formula we have
\begin{equation}\label{equation_characteristic1}
\begin{array}{rcr}
f(Z_t)-f(Z_0)&=&\int_0^t{\partial_{s^i}(\tilde{f})(Z_{s_-})dZ^i_s}+\frac{1}{2} \int_0^t{\partial_{s^is^j}(\tilde{f})(Z_{s_-})d[Z^i,Z^j]^c_s}+\\
&&+\sum_{0 \leq s \leq t}(f(Z_s)-f(Z_{s_-})-\Delta {Z}^i_s
\partial_{s^i}(\tilde{f})(Z_{s_-}))
\end{array}
\end{equation}
and the corresponding formula holds for $g$. Recalling that
$[\tilde{Z}^i,\tilde{Z}^j]^c=[Z^i,Z^j]^c=[M^{i,c},M^{j,c}]$ and using equation \refeqn{equation_extension1}, we have

$$[f(Z),g(Z)]^c_t=\int_0^t{Y_{\alpha}(f)(Z_{s_-})Y_{\beta}(g)(Z_{s_-})dA^{\alpha\beta}_s}.$$
Finally, recalling that
\begin{eqnarray*}
Z^i_t=B^i_t+M^{i,c}_t+M^{i,d}_t+\sum_{0 \leq s \leq t}(\Delta Z_s^i-h^{\alpha}(\Delta Z_s)Y_{\alpha}(S^i)(Z_{s_-}))%\\
\end{eqnarray*}
and using equation \refeqn{equation_extension2}, equation \refeqn{equation_characteristic1} and Lemma
\ref{lemma_geometrical1} we obtain that
\begin{eqnarray*}
&f(Z_t)-f(Z_0)-\int_0^t{Y_{\alpha}(f)(Z_{s_-})db^{\alpha}_s}-\frac{1}{2}\int_0^t{Y_{\alpha}(Y_{\beta}(f))(Z_{s_-})dA^{\alpha\beta}_s}+&\\
&-\sum_{0 \leq s \leq t}(f(Z_s)-f(Z_{s_-})-h^{\alpha}(\Delta
Z_s)Y_{\alpha}(f)(Z_{s_-}))&
\end{eqnarray*}
is a local martingale.\\
The uniqueness of $\nu$ has already been proved using the uniqueness of the compensator of the random measure $\mu^Z$  (see \cite[Chapter II,
Theorem 1.8]{Jacod2003}). The uniqueness of 
$b^{\alpha},A^{\alpha\beta}$ follows in a standard way (see \cite[Chapter 2 Remark 2.8]{Jacod2003}) exploiting the fact that 
martingales of bounded variation are constant (see, e.g., \cite[Chapter III, Theorem 12]{Protter1990}).
\hfill\end{proof}

\subsection{Gauge symmetries and semimartingales characteristics}\label{subsection_gauge_main}

In this section, after introducing some useful geometric and probabilistic tools, we look for suitable conditions on the characteristics of a
semimartingale in order to ensure that it admits a gauge symmetry group.\\

\noindent We start by discussing the role of the filtration $\mathcal{F}_t$ in Definition \ref{definition_gauge}: in fact, although
the definition of gauge symmetry group seems to concern only the law of $Z$ and not the  filtration, a
semimartingale $Z$ may admit a gauge symmetry group $\mathcal{G}$ with respect to a filtration $\mathcal{F}_t$  such that $\mathcal{G}$ is
no longer a gauge symmetry group for $Z$ if a different filtration $\mathcal{H}_t$ is considered. For example, let $W$ be a standard $n$
dimensional Brownian motion, let $\mathcal{F}_t$ be its natural filtration and  put $\mathcal{H}_t=\mathcal{F}_t \vee \sigma(W_T)$.
Although $W$ is a semimartingale with respect to both $\mathcal{F}_t$ and $\mathcal{H}_t$ (see, e.g. \cite[Chapter VI Theorem 3]{Protter1990}), the rotations are a gauge symmetry group for $W$ only with
respect to the filtration $\mathcal{F}_t$ and not with respect to $\mathcal{H}_t$. In fact if $B:\mathbb{R}^n \rightarrow O(n)$ is a measurable
map such that $B(x) \cdot x=(|x|,0,...,0)$, the constant process $B(W_T)$ is predictable with respect to the filtration $\mathcal{H}_t$ and it
is not adapted with respect to $\mathcal{F}_t$. On the other hand the semimartingale
$$\tilde{W}^{\alpha}_t=\int_0^t{B^{\alpha}_{\beta}(W_T)dW^{\beta}_s}=B^{\alpha}_{\beta}(W_T) W^{\beta}_t,$$
is not a Brownian motion since, for example, $\tilde{W}_T=(|W_T|,0,...,0)$ is not a Gaussian random variable. This is due to the fact
that the family of the $\mathcal{H}_t$-predictable processes is too large for preserving the invariance property of Brownian motion. In order to
avoid this kind of phenomena, and ensuring that owning a gauge symmetry is a property of the law of the process $Z$ and not of
its filtration, we introduce the following definition.

\begin{definition}
Let $Z$ be a semimartingale with respect to the filtration $\mathcal{F}_t$. We say that the filtration $\mathcal{F}_t$ is a generalized natural
filtration if there exists a version of the characteristic triplet $(b,A,\nu)$ of $Z$ (with respect to the filtration $\mathcal{F}_t$), which is
predictable with respect to the natural filtration $\mathcal{F}^Z_t \subset \mathcal{F}_t$ of the semimartingale $Z$.
\end{definition}

It is important to note that if $(b,A,\nu)$ are the characteristics of a semimartingale $Z$ with respect to its natural filtration, then they
are also the characteristics of $Z$ with respect to any generalized natural filtration for $Z$. For this reason, hereafter, whenever we consider
a generalized natural filtration $\mathcal{F}_t$ for $Z$ we can use the characteristics $(b,A,\nu)$ with respect to the natural filtration of
$Z$ as the characteristics of $Z$ with respect to $\mathcal{F}_t$.\\

Let us  consider the probability space
$$\Omega^c=\Omega_A \times \Omega_B,$$
where $\Omega_A=\mathcal{D}_{1_N}([0,+\infty),N)$ is the space of c\acc{a}dl\acc{a}g functions $\omega_A(t)$ taking values on $N$ and such that
$\omega_A(0)=1_N$, and $\Omega_B=L_{loc}^{\infty}([0,+\infty),\mathcal{G})$ is
the set of locally bounded and measurable functions taking values in $\mathcal{G}$.\\
On the set $\Omega_A$ we consider the standard filtration $\mathcal{F}^A_{t}$ of $\mathcal{D}_{1_N}([0,+\infty),N)$ and on $\Omega_B$  the
filtration $\mathcal{F}^B_t$ generated by the standard filtration of $C^0([0,+\infty),\mathcal{G}) \subset \Omega_B$ (usually called the
predictable filtration). We denote by $\pi_A, \pi_B$ the projections of $\Omega$ onto $\Omega_A$ and $\Omega_B$ respectively, and so we define
$\mathcal{F}^c_t=\sigma(\pi_A^{-1}(\mathcal{F}^A_t),\pi^{-1}_B(\mathcal{F}^B_t))$. We call $\Omega^c$ the canonical probability space and
$\mathcal{F}^c_t$ the natural filtration on $\Omega^c$. \\
We need the space $\Omega_A$ in order  to define a semimartingale $Z$ on $N$, and the space $\Omega_B$ in order  to define a locally bounded predictable process
taking values on $\mathcal{G}$. Choosing a particular semimartingale $Z$ on $N$ and a predictable process $G_t$ on $\mathcal{G}$ is equivalent to
fixing a probability measure $\mathbb{P}$ on $\Omega$ such that $Z_t(\omega)=\pi_A(\omega)(t)$ is a semimartingale on $N$ (the fact that the
process $G_t(\omega)=\pi_B(\omega)(t)$ is a locally bounded predictable process
is automatically guaranteed by the choices of the space $\Omega_B$ and the filtration $\mathcal{F}^B_t$).\\
Given  an $N$ valued semimartingale $Z$ and  a generic predictable process $G_t$  taking values in $\mathcal{G}$, both defined on a
probability space $(\Omega,\mathcal{F},\mathcal{F}_t,\mathbb{P})$, there exists a natural
probability measure $\mathbb{P}^c$ on the canonical probability space $\Omega^c$ induced by the probability measure $\mathbb{P}$ such that the canonical processes $(\omega_A(t),\omega_B(t))$ have the same law as $(Z_t(\omega),G_t(\omega))$. 
Thus, fixing the process
$G_t$ and the law $\mathbb{P}^Z$ of the semimartingale $Z_t$ is equivalent to fixing the probability law $\mathbb{P}^c$ on $\Omega^c$ so that
the restriction of $\mathbb{P}^c$ to the $\Omega_A$ measurable subsets,  $\mathbb{P}=\mathbb{P}^c|_{\mathcal{F}^A}$, is exactly $\mathbb{P}^Z$.
As a consequence, proving a statement  involving only the measurable objects $Z_t,G_t$ which turns out to be independent from the choice of a specific
predictable process $G_t$, is equivalent to proving the same statement on the probability space $\Omega^c$ with respect to the canonical
processes $\omega_A(t),\omega_B(t)$ and for a suitable subset of probability laws $\mathbb{P}^c$ on $\Omega^c$ such that
$\mathbb{P}|_{\mathcal{F}^A}=\mathbb{P}^Z$. This subset depends on the filtration  $\mathcal{F}_t$ of the probability space chosen. In
particular if $\mathcal{F}_t$ is a generalized natural filtration for $Z$, then $\tilde{\mathcal{F}}^c_t$ is a generalized natural filtration for
$\omega_A(t)$ (where $\tilde{\mathcal{F}}^c_t$ is the completion of $\mathcal{F}^c_t$ with respect to $\mathbb{P}^c$). Since we consider only
generalized natural filtrations for the semimartingale $Z$, we suppose that $\mathbb{P}^c$ is such that $\tilde{\mathcal{F}}^c_t$ is a generalized natural
filtration.\\
For this reason, in the following  we only consider  the canonical probability space $\Omega^c$ with law $\mathbb{P}=\mathbb{P}^c$ and denote by $Z_t$
the canonical semimartingale $\omega_A(t)$ and by $G_t$ the canonical predictable process $\omega_B(t)$.\\
In the same way, we identify  the solution $\tilde{Z}$ to the SDE $d\tilde{Z}_t=\Xi_{G_t}(dZ_t)$ with the measurable map
$\Lambda_A:\Omega \rightarrow \Omega_A$ such that $\tilde{Z}_t(\omega)=\Lambda_{A}(\omega)(t)$. We can extend the map $\Lambda_A$ to a map
$\Lambda:\Omega \rightarrow \Omega$ given by
$$\Lambda(\omega)=(\Lambda_A(\omega),\pi_2(\omega)),$$
defining a new probability measure
$\mathbb{P}'=\Lambda_*(\mathbb{P})$. The map $\Lambda$  is $\mathbb{P}$ invertible, i.e. there exists a map $\Lambda'$ such that
$\Lambda \circ \Lambda'$ is equal to the identity map up to $\mathbb{P}'$ null sets and the map $\Lambda' \circ \Lambda$ is equal to the
identity map up to $\mathbb{P}$ null sets. The construction of the  map $\Lambda'$ is similar to the construction of $\Lambda$ starting from  the stochastic
differential equation $dZ_t=\Xi_{(G_t)^{-1}}(d\tilde{Z}_t)$ and the measure $\mathbb{P}'$. The proof of the fact that $\Lambda'$ is the
$\mathbb{P}'$ inverse of $\Lambda$ and hence $\Lambda$ is the $\mathbb{P}$ inverse
of $\Lambda'$, is based on Theorem \ref{theorem_gauge1}. It is important to note that $\hat{\mathcal{F}}^c_t$, i.e.
the completion of $\mathcal{F}^c_t$ with respect to the probability $\mathbb{P}'$, may not be a generalized natural
filtration for $\omega_A(t)$ even if $\tilde{\mathcal{F}}^c_t$ is a generalized natural filtration for $Z_t$ under $\mathbb{P}$. \\
Given the probability law $\mathbb{P}^Z$ on $\Omega_A$, by Theorem \ref{theorem_characteristic1} there exist some measurable and predictable
functions $b^{\alpha},A^{\alpha\beta}:\Omega_A \times \mathbb{R}_+ \rightarrow \mathbb{R}$ and a random predictable measure $\nu:\Omega_A
\rightarrow \mathcal{M}(\mathbb{R}_+ \times N)$ which are the characteristics of the canonical process $Z_t(\omega)=\pi_A(\omega(t))$ and are
uniquely defined up to $\mathbb{P}^Z$ null-sets. The characteristic triplets $(b,A,\nu)$, seen as $\mathcal{F}^A$ measurable objects, are
uniquely determined by the probability measure $\mathbb{P}^Z$. The converse, namely the fact  that the $\mathcal{F}^A$ measurable objects
$(b,A,\nu)$ uniquely identify the probability law $\mathbb{P}^Z$, is in general not true (the reader can think, for example, to  diffusion
processes whose martingale problem admits multiple solutions). When the triplet $(b,A,\nu)$ uniquely determines the probability
law $\mathbb{P}^Z$ on $\Omega_A$ we say that the triplet $(b,A,\nu)$ \emph{uniquely identifies the law of $Z$} (the reader interested in this problem and in the more general martingale problem for semimartingales is invited to consult \cite[Chapter III]{Jacod2003}). Examples of this situation  are,
e.g., the $\mathbb{R}^n$ Brownian motion, $\mathbb{R}^n$ L\'evy processes, diffusion processes with a unique solution to the associated
martingale problem, and point processes. If the law $\mathbb{P}$ on $\Omega^c$ is such that $\mathbb{P}|_{\mathcal{F}^A}=\mathbb{P}^Z$ and the
filtration $\tilde{\mathcal{F}}^c_t$ is a generalized natural filtration for $\omega_A(t)$, then the same $\mathcal{F}^A$ measurable
characteristics $(b,A,\nu)$, viewed as $\Omega^c$ semimartingales,  are  characteristics of $Z$ with respect to $\tilde{\mathcal{F}}^c_t$ as
well. Obviously it is possible to define other characteristic triplets $(\bar{b},\bar{A},\bar{\nu})$ of $Z$ on $\Omega^c$ which are only
$\tilde{\mathcal{F}}^c$ adapted and not $\mathcal{F}^A_t$ adapted. The characteristics
$(\bar{b},\bar{A},\bar{\nu})$ are equal to $(b,A,\nu)$ up to $\mathbb{P}$ null sets (and not only up to $\mathbb{P}^Z$ null sets).\\

Since the map $\Xi_g$ is such that $\Xi_g(1_N)=1_N$, using the identification of the Lie algebra $Y_1,...,Y_n$ generated by the right-invariant vector fields with $\mathfrak{n}=T_{1_N}N$ we can define two linear maps $\Gamma_g:\mathfrak{n} \rightarrow \mathfrak{n}$ and $O_g:\mathfrak{n} \otimes \mathfrak{n} \rightarrow \mathfrak{n}$ in the following way: if $Y,Y'$ are two invariant vector fields with associated flows $\Phi_a,\Phi'_b:N \rightarrow N$ we define
\begin{eqnarray*}
\Gamma_g(Y)&=&\partial_a(\Xi_g \circ \Phi_a)(1_N)|_{a=0}\\
O_g(Y',Y)&=&\partial_b(\partial_a(\Xi_g \circ \Phi_a \circ \Phi'_b))(1_N)|_{a=0,b=0}.
\end{eqnarray*}
Since $Y(f)(z)=\partial_a(f\circ \Phi_a)(z)|_{a=0}$ and the flow of right-invariant vector fields commutes with the right multiplication, we can exploit the definition of $\Psi^{\Xi}_g(\tilde{z},z)=\Xi_g(z) \cdot \tilde{z}$ and $\overline{\Psi}^{\Xi_g}(\tilde{z},z',z)=\Xi_g(z' \cdot z^{-1}) \cdot \tilde{z}$ to prove that, for any $Y,Y'$ right-invariant vector fields,
\begin{eqnarray}
&\begin{array}{c}
Y^z(f \circ \Psi^{\Xi}_g)(\tilde{z},1_N)=Y^{z'}(f \circ \overline{\Psi}^{\Xi}_g)(\tilde{z},z,z)=\\
=(\Gamma_g(Y))(f)(\tilde{z})
\end{array}&\label{equation_upsilon}\\
&\begin{array}{c}
Y'^z(Y^z(f \circ \Psi^{\Xi}_g))(\tilde{z},1_N)=Y'^{z'}(Y^{z'}(f \circ \overline{\Psi}^{\Xi}_g))(\tilde{z},z,z)=\\
=(\Gamma_g(Y'))[(\Gamma_g(Y))(f)](\tilde{z})+(O_g(Y',Y))(f)(\tilde{z}),
\end{array}&\label{equation_O}
\end{eqnarray}
where the superscript $\cdot^z,\cdot^{z'}$ means that the vector fields $Y,Y'$ apply to the $z,z'$ variables respectively.

\begin{theorem}\label{theorem_characteristic2}
Let $Z$ be a semimartingale on a Lie group $N$ with characteristic triplet $(b(\omega_A),A(\omega_A),\nu(\omega_A))$. Suppose that $Z$ admits
$\mathcal{G}$ with action $\Xi_g$ as a gauge symmetry group with respect to any generalized natural filtration. Then if $\mathbb{P}$ is a measure
on $\Omega^c$ such that $\tilde{\mathcal{F}}_t$ is a generalized natural filtration with respect to both $Z_t$ and $d\tilde{Z}_t=\Xi_{G_t}(dZ_t)$, we have
\begin{equation}\label{equation_characteristic8}
\begin{array}{rcl}
db^{\alpha}_t(\omega)&=&\Gamma^{\alpha}_{g(\omega_B),\beta}db_t^{\beta}(\pi_A(\Lambda'(\omega)))+\frac{1}{2}
O^{\alpha}_{g(\omega_B),\beta\gamma}dA^{\beta\gamma}_t(\pi_A(\Lambda'(\omega)))+\\
&&+\int_N{(h^{\alpha}(z')-h^{\beta}(\Xi_{g^{-1}(\omega_B)}(z'))\Gamma^{\alpha}_{g(\omega_B),\beta})\nu(\pi_A(\Lambda'(\omega)),dt,dz')}
\end{array}
\end{equation}
\begin{eqnarray}
dA_t^{\alpha\beta}(\omega)&=&\Gamma^{\alpha}_{g(\omega_B),\gamma}\Gamma^{\beta}_{g(\omega_B),\delta}dA_t^{\gamma\delta}
(\pi_A(\Lambda'(\omega)))\label{equation_characteristic9}\\
\nu(\omega,dt,dz)&=&\Xi_{g(\omega_B)*}(\nu(\pi_A(\Lambda'(\omega)),dt,dz)),\label{equation_characteristic10}
\end{eqnarray}
up to a $\mathbb{P}'=\Lambda_*(\mathbb{P})$ null set. Furthermore, if $\tilde{b},\tilde{A},\tilde{\nu}$ are $\pi^{-1}_A(\mathcal{F}^A)$
measurable, the previous equalities hold with respect to $\mathbb{P}^Z$ null sets. Finally, if $(b,A,\nu)$ uniquely determines the law of
$Z$, the previous conditions are also sufficient for the existence of a gauge symmetry group.
\end{theorem}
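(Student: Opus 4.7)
The plan is to derive the characteristic triplet of the transformed semimartingale $\tilde Z$ explicitly from the SDE $d\tilde Z_t=\Xi_{G_t}(dZ_t)$ in terms of the triplet $(b,A,\nu)$ of $Z$ and the action $\Xi_g$, and then identify it, via gauge invariance, with the triplet of $Z$ read on the trajectory of $\tilde Z$. The converse direction will follow from the uniqueness hypothesis on $(b,A,\nu)$.

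First I would fix a bounded $f\in C^\infty(N)$ and a Riemannian embedding $S:N\to\mathbb R^{k_N}$, and apply Definition \ref{definition_solution} to $f(\tilde Z_t)$. Lemma \ref{remark_extension} converts the ambient derivatives $\partial_{z'^\alpha}(f\circ\overline{\Psi}^\Xi_g)$ and $\partial_{z'^\alpha z'^\beta}(f\circ\overline{\Psi}^\Xi_g)$ at $z'=z$ into right-invariant differential operators; by the very definitions \refeqn{equation_upsilon}--\refeqn{equation_O} of $\Gamma_g$ and $O_g$, these become $\Gamma_{G_s}(Y_\alpha)(f)(\tilde Z_{s-})$ and $\Gamma_{G_s}(Y_\alpha)\Gamma_{G_s}(Y_\beta)(f)(\tilde Z_{s-})+O_{G_s}(Y_\alpha,Y_\beta)(f)(\tilde Z_{s-})$. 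Substituting the semimartingale decomposition of $dZ^\alpha$ coming from the characteristic triplet of $Z$ (cf.\ \refeqn{equation_characteristic4}) and applying Lemma \ref{lemma_geometrical1} together with Remark \ref{remark_geometrical1} to reassemble the jump terms, I arrive at the candidate characteristic triplet $(\tilde b,\tilde A,\tilde\nu)$ for $\tilde Z$:
\begin{align*}
d\tilde b^\alpha_t&=\Gamma^\alpha_{G_t,\beta}\,db^\beta_t+\tfrac12 O^\alpha_{G_t,\beta\gamma}\,dA^{\beta\gamma}_t+\int_N\bigl(h^\alpha(\Xi_{G_t}(z'))-\Gamma^\alpha_{G_t,\beta}h^\beta(z')\bigr)\nu(dt,dz'),\\
d\tilde A^{\alpha\beta}_t&=\Gamma^\alpha_{G_t,\gamma}\Gamma^\beta_{G_t,\delta}\,dA^{\gamma\delta}_t,\\
\tilde\nu(dt,dz)&=(\Xi_{G_t})_*\,\nu(dt,dz).
\end{align*}
The third identity is immediate from $\Delta\tilde Z_s=\Xi_{G_s}(\Delta Z_s)$, and the integral correction in $d\tilde b^\alpha$ accounts for the mismatch between the embedding-coordinate jump $\Delta Z^\alpha$ produced by the SDE and the truncation $h^\alpha(\Delta\tilde Z_s)$ demanded by the characteristic decomposition of $\tilde Z$.

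Next I would invoke the gauge invariance hypothesis: the law of $\tilde Z$ under $\mathbb P$ equals $\mathbb P^Z$, so the characteristics of $\tilde Z$ at $\omega$, viewed as $\mathcal{F}^A$-measurable functionals, coincide with $(b,A,\nu)$ evaluated at the trajectory of $\tilde Z$, namely at $\pi_A(\Lambda(\omega))$. Equating this with the triplet computed above and performing the substitution $\omega\mapsto\Lambda'(\omega)$---$\mathbb P$-a.s.\ invertible with inverse $\Lambda$, hence yielding identities that hold $\mathbb P'$-a.s.\ with $\mathbb P'=\Lambda_*\mathbb P$---together with the invariance $G_t(\Lambda'(\omega))=G_t(\omega)$ (since $\Lambda'$ preserves the $\Omega_B$-coordinate) and a rewriting of the integrand of $d\tilde b^\alpha$ against $\nu(\pi_A(\Lambda'(\omega)))$ via the pushforward relation \refeqn{equation_characteristic10}, produces exactly \refeqn{equation_characteristic8}--\refeqn{equation_characteristic10}. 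When $(\tilde b,\tilde A,\tilde\nu)$ are $\pi_A^{-1}(\mathcal{F}^A)$-measurable, these are statements on $\Omega_A$ alone and hence hold up to a $\mathbb P^Z$-null set. For sufficiency, under the uniqueness hypothesis the same computation yields the triplet of $\tilde Z$, which by the assumed identities matches the triplet of $Z$ along $\pi_A(\Lambda(\omega))$; uniqueness of the triplet-law correspondence then forces $\mathbb P^{\tilde Z}=\mathbb P^Z$, which is gauge invariance.

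The main obstacle lies in the second step: the integral correction to $d\tilde b^\alpha$ arises from a delicate reconciliation between the SDE's jump term $\sum_s[f(\tilde Z_s)-f(\tilde Z_{s-})-\Gamma_{G_s}(Y_\alpha)(f)(\tilde Z_{s-})\Delta Z^\alpha_s]$ and the truncation-based jump term $\sum_s[f(\tilde Z_s)-f(\tilde Z_{s-})-h^\alpha(\Delta\tilde Z_s)Y_\alpha(f)(\tilde Z_{s-})]$ of the characteristic formulation. Reconciling the two through the $\nu$-compensation of $\Delta Z^\alpha$, the identity $\Delta\tilde Z=\Xi_G(\Delta Z)$, and the relation $\Gamma_g(Y_\beta)(h^\alpha)(1_N)=\Gamma^\alpha_{g,\beta}$ precisely generates the asserted $\nu$-correction. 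The subsequent measure-theoretic transfer between $\mathbb P$- and $\mathbb P'$-a.s.\ statements via the $\mathbb P$-invertible map $\Lambda$ is routine once the canonical probability space $\Omega^c$ has been set up as in the preceding text.
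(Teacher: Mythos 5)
Your proposal follows essentially the same route as the paper: the explicit transformed triplet $(\tilde b,\tilde A,\tilde\nu)$ you derive from the SDE via Lemma \ref{lemma_geometrical1} and the maps $\Gamma_g$, $O_g$ is precisely the content of the paper's Lemma \ref{lemma_characteristic1} (your jump-correction integral matches the paper's after the change of variables $z'\mapsto\Xi_{G_t}(z')$ that trades $\nu$ for $\Xi_{G_t*}(\nu)$), and your subsequent identification of the triplets through the law-equality and the $\mathbb{P}$-invertible map $\Lambda$, together with the measurability and uniqueness remarks, is exactly the paper's argument. No substantive differences.
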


Before proving the theorem we study the transformations of the characteristics under (random) semimartingale changes.

\begin{lemma}\label{lemma_characteristic1}
If $Z$ is a semimartingale with characteristics $(b,A,\nu)$, then $d\tilde{Z}=\Xi_{G_t}(dZ)$ is a semimartingale with characteristics

\begin{eqnarray*}
&d\tilde{b}^{\alpha}_t=\Gamma^{\alpha}_{G_t,\beta}db^{\beta}_t+\frac{1}{2}O^{\alpha}_{G_t,\beta\gamma}dA^{\beta\gamma}_t
+\int_N{(h^{\alpha}(z')-h^{\beta}(\Xi_{G_t^{-1}}(z'))\Gamma^{\alpha}_{G_t,\beta})\nu(dt,dz')}&\\
&d\tilde{A}^{\alpha\beta}_t=\Gamma^{\alpha}_{G_t,\gamma}\Gamma^{\beta}_{G_t,\delta}dA^{\gamma\delta}_t&\\
&\tilde{\nu}=\Xi^*_{G_t}(\nu).&
\end{eqnarray*}
\normalsize
\end{lemma}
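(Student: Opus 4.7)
The strategy is to verify the three local-martingale conditions of Definition \ref{definition_characteristic} for $\tilde{Z}$ with the proposed candidate triplet, by applying the SDE \refeqn{equation_manifold1} to a smooth bounded test function $f$ and comparing with the corresponding conditions already known for $Z$.

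First I would handle the jump measure. Since $\Psi^{\Xi}_g(\tilde{z},z)=\Xi_g(z)\cdot\tilde{z}$, the jumps of $\tilde{Z}$ satisfy $\Delta\tilde{Z}_s = \Xi_{G_s}(\Delta Z_s)$, which is transparent from the discrete-time interpretation discussed at the start of Section \ref{subsection_invariant}. For any smooth bounded $p$ vanishing in a neighbourhood of $1_N$, continuity of $\Xi_g$ together with $\Xi_g(1_N)=1_N$ ensures that $p\circ\Xi_{G_s}$ also vanishes near $1_N$, so $\sum_{s\le t}p(\Delta\tilde{Z}_s)=\sum_{s\le t}(p\circ\Xi_{G_s})(\Delta Z_s)$. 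Applying the first condition of Definition \ref{definition_characteristic} to $Z$ with the predictable test function $(s,z)\mapsto p(\Xi_{G_s}(z))$, which is legitimate by predictability and local boundedness of $G$ and the usual localisation, shows that this sum is compensated by $\int_0^t\!\!\int_N p(\Xi_{G_s}(z'))\,\nu(ds,dz')$, which identifies $\tilde{\nu}$ with the pushforward of $\nu$ by $\Xi_{G_s}$.

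Next, for $\tilde{A}$ and $\tilde{b}$, I would apply It\^o's formula to $f(\tilde{Z}_t)$ for a smooth bounded $f$. Using the SDE \refeqn{equation_manifold1} together with identities \refeqn{equation_upsilon}--\refeqn{equation_O} to rewrite the partial derivatives of $f\circ\overline{\Psi}^{\Xi}_{G_s}$ at $(\tilde{z},z,z)$ in terms of the vector fields $Y_\alpha$ and the linear maps $\Gamma_{G_s},O_{G_s}$ gives
\begin{eqnarray*}
 f(\tilde{Z}_t)-f(\tilde{Z}_0)
 &=& \int_0^t \Gamma^{\beta}_{G_s,\alpha}\,Y_\beta(f)(\tilde{Z}_{s_-})\,dZ^{\alpha}_s \\
 && + \tfrac{1}{2}\int_0^t \bigl(\Gamma^{\delta}_{G_s,\alpha}\Gamma^{\gamma}_{G_s,\beta}\,Y_\delta Y_\gamma(f)+O^{\gamma}_{G_s,\alpha\beta}\,Y_\gamma(f)\bigr)(\tilde{Z}_{s_-})\,d[Z^{\alpha},Z^{\beta}]^c_s \\
 && + \sum_{0<s\le t}\bigl(f(\tilde{Z}_s)-f(\tilde{Z}_{s_-})-\Gamma^{\beta}_{G_s,\alpha}Y_\beta(f)(\tilde{Z}_{s_-})\,\Delta Z^{\alpha}_s\bigr).
\end{eqnarray*}
Comparing the continuous-bracket contribution to $[f(\tilde{Z}),g(\tilde{Z})]^c$ arising from the second line with the second condition of Definition \ref{definition_characteristic} for $\tilde{Z}$, and using that $d[Z^{\alpha},Z^{\beta}]^c_s=dA^{\alpha\beta}_s$, one reads off $d\tilde{A}^{\alpha\beta}_s=\Gamma^{\alpha}_{G_s,\gamma}\Gamma^{\beta}_{G_s,\delta}\,dA^{\gamma\delta}_s$.

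Finally, to identify $\tilde{b}$ one reorganises the remaining terms into the canonical form of equation \refeqn{equation_characteristic4} for $\tilde{Z}$. The first-order term produces the drift contribution $\Gamma^{\alpha}_{G_s,\beta}\,db^{\beta}$ through the characteristic decomposition of $Z$; the $O$-piece of the second-order term contributes $\tfrac{1}{2}O^{\alpha}_{G_s,\beta\gamma}\,dA^{\beta\gamma}$; and the jump sum, after adding and subtracting $h^{\alpha}(\Delta\tilde{Z}_s)Y_\alpha(f)(\tilde{Z}_{s_-})$, splits into the canonical jump remainder for $\tilde{Z}$ plus a truncation correction that is bounded in the jump variable and whose compensator against $\nu$ gives the remaining integral term. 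The hard part will be this last bookkeeping step: one has to consistently decompose $\Delta Z^{\alpha}$ into its truncated part $h^{\alpha}(\Delta Z_s)$ and a big-jump residual, use $\Delta Z_s=\Xi_{G_s^{-1}}(\Delta\tilde{Z}_s)$ to recast $h^{\alpha}(\Delta Z_s)$ into the form $h^{\beta}(\Xi_{G_s^{-1}}(\,\cdot\,))\Gamma^{\alpha}_{G_s,\beta}$ appearing in the announced formula, and verify at every step that predictability and local boundedness of $G$ permit applying the compensators of $Z$ to the $G$-dependent integrands.
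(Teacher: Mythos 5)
Your proposal is correct and follows essentially the same route as the paper: $\tilde{\nu}$ is identified as the pushforward $\Xi_{G_t,*}(\nu)$ via the jump relation $\Delta\tilde{Z}_s=\Xi_{G_s}(\Delta Z_s)$ and predictability of the composed test function, while $\tilde{A}$ and $\tilde{b}$ are read off from the SDE representation of $f(\tilde{Z})$ combined with Lemma \ref{lemma_geometrical1} and the identities \refeqn{equation_upsilon}--\refeqn{equation_O} for $\Gamma_g$ and $O_g$. The only presentational difference is that the paper routes the second-order computation through an embedding $S$ and the pseudoinverse $\tilde{P}$ (so that expressions like $dZ^{\alpha}$ make sense on a general Lie group) and carries out only the $\tilde{A}$ case explicitly, whereas you sketch the $\tilde{b}$ bookkeeping, which the paper leaves to the reader.
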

\begin{proof}
We denote by $(\tilde{b},\tilde{A},\tilde{\nu})$ the characteristic triplet of $\tilde{Z}$. Since the jumps of $\tilde{Z}$ are $\Delta
\tilde{Z}_t=\Xi_{G_t}(\Delta Z_t)$ and the jump times of $\tilde{Z}$ are the same of $Z$  we have, using the notation of  Theorem
\ref{theorem_characteristic1},
$$\mu^{\tilde{Z}}(dt,dz)=\sum_{s \geq 0}I_{\Delta \tilde{Z}_s \not =0}(s)\delta_{(s,\Delta \tilde{Z}_s)}(dt,dz)=\sum_{s \geq 0}
I_{\Delta Z_s \not =0}(s)\delta_{(s,\Xi_{G_s}(\Delta Z_s))}(dt,dz).$$
If we identify, with a slight abuse of notation, the push-forward of the
map $(s,z) \rightarrow (s,\Xi_{G_s}(z))$ with the push-forward of the map $(s,z) \rightarrow \Xi_{G_s}(z)$, we have
$$\delta_{(s,\Xi_{G_s}(\Delta Z_s))}(du,dz)=\Xi_{G_s,*}(\delta_{(s,\Delta Z_s)})(du,dz),$$
and so $\mu^{\tilde{Z}}=\Xi_{G_t,*}(\mu^Z)$.
If we consider a function $h:N \rightarrow \mathbb{R}$ which is identically zero in a neighbourhood of $1_N$, by definition of push-forward of a measure we have
$$\int_0^t{\int_N{h(z)\Xi_{G_s,*}(\mu^Z-\nu)(ds,dz)}}=\int_0^t{\int_N{h(\Xi_{G_s}(z))(\mu^Z-\nu)(ds,dz)}}.$$
Furthermore $\int_0^t{\int_N{h(\Xi_{G_s}(z))(\mu^Z-\nu)(ds,dz)}}$ is a martingale, since $h(\Xi_{G_s}(z))$ is a predictable function and $\nu$
is the predictable projection of the random measure $\mu^Z$. Since $\mu^{\tilde{Z}}=\Xi_{G_t,*}(\mu^Z)$ we have that $\Xi_{G_t,*}(\nu)$ is the
predictable projection of the measure $\mu^{\tilde{Z}}$ and $\tilde{\nu}=\Xi_{G_t,*}(\nu)$.\\
For the formulas of $\tilde{A}$ and $\tilde{b}$ we use the definition of solution to a geometrical SDE, Lemma \ref{lemma_geometrical1} and equation \refeqn{equation_upsilon} and \refeqn{equation_O} for $\Gamma_g$ and $O_g$. We make the proof only for $\tilde{A}$, since the proof for $\tilde{b}$ follows the same lines. \\
Fixing an embedding $S:N \rightarrow \mathbb{R}^{k_N}$, by Lemma \ref{lemma_geometrical1}, for any functions
$f,g \in \cinf(N)$, equation \refeqn{equation_upsilon} for $\Gamma_g$ ensures that
%\small
\normalsize
$$
\begin{array}{c}
d[f(\tilde{Z}),g(\tilde{Z})]_t^c=
\partial_{s'^i}(\tilde{f} \circ \overline{\Psi}^{\Xi}_g)(\tilde{Z}_{t_-},Z_{t_-},Z_{t_-})
\partial_{s'^j}(\tilde{g}\circ \overline{\Psi}^{\Xi}_g)(\tilde{Z}_{t_-},Z_{t_-},Z_{t_-})\cdot\\
\cdot d[S^i(Z),S^j(Z)]_t^c=Y^{z'}_{\alpha}(\tilde{f} \circ \overline{\Psi}^{\Xi}_g)(\tilde{Z}_{t_-},Z_{t_-},Z_{t_-})\cdot\\
\cdot Y^{z'}_{\beta}(\tilde{g} \circ \overline{\Psi}^{\Xi}_g)(\tilde{Z}_{t_-},Z_{t_-},Z_{t_-})\tilde{P}^{\alpha}_i(Z_{t_-})
\tilde{P}^{\beta}_j(Z_{t_-})d[S^i(Z),S^j(Z)]^c_t\\
=Y_{\gamma}(f)(\tilde{Z}_{t_-})Y_{\delta}(g)(\tilde{Z})\Gamma^{\gamma}_{G_t,\alpha}\Gamma^{\delta}_{G_t,\beta}\tilde{P}^{\alpha}_i(Z_{t_-})\tilde{P}^{\beta}_j(Z_{t_-})
d[S^i(Z),S^j(Z)]^c_t,
\end{array}
$$
\normalsize
where $\tilde{g},\tilde{f}$ are the usual extensions of $f,g$ on
$\mathbb{R}^{k_N}$, and $\tilde{P}$ is a pseudoinverse matrix of
$P=(Y_{\alpha}(S^i))$ (see Lemma \ref{remark_extension}).
By definition of characteristics we have that
\begin{eqnarray*}
&[S^i(Z),S^j(Z)]_t^c-\int_0^t{Y_{\alpha}(S^i)(Z_{s_-})Y_{\beta}(S^j)(Z_{s_-})dA^{\alpha\beta}_s}=&\\
&=[S^i(Z),S^j(Z)]_t^c-\int_0^t{P^i_{\alpha}(Z_{s_-})P^j_{\beta}(Z_{s_-})dA^{\alpha\beta}_s}&
\end{eqnarray*}
is a local martingale. This means that
$$[f(\tilde{Z}),g(\tilde{Z})]^c_t-\int_0^t{Y_{\gamma}(f)(\tilde{Z}_{s_-})Y_{\delta}(g)(\tilde{Z}_{s_-})\Gamma^{\gamma}_{G_s,\alpha}\Gamma^{\delta}_{G_s,\beta}dA^{\alpha\beta}_s}$$
is a local martingale. ${}\hfill$
\hfill\end{proof}

\noindent\begin{proof}[Proof of Theorem \ref{theorem_characteristic2}]
We cannot directly use  Lemma \ref{lemma_characteristic1} to compare $(b,A,\nu)$ with $(\tilde{b},\tilde{A},\tilde{\nu})$, since
 $Z$ and $\tilde{Z}$, where $d\tilde{Z}_t=\Xi_{G_t}(dZ_t)$, are two different processes being two different
 functions  from $\Omega^c \times \mathbb{R}_+$ into $N$. Indeed $Z_t(\omega)=\pi_A(\omega)(t)$,
while $\tilde{Z}_t(\omega)=\pi_A(\Lambda(\omega))(t)$.\\
Since $\Lambda'$ is the $\mathbb{P}'$ inverse of $\Lambda$, $\tilde{Z}(\Lambda'(\omega))$ is exactly the same process as $Z$ (as functions
defined on $\Omega^c$). If $\tilde{Z}(\Lambda')$ and $Z$ have the same law, and since both the filtrations $\hat{\mathcal{F}}^c_t$ and $\tilde{\mathcal{F}}^c_t$ are canonical, they necessarily have the same characteristics up to a $\mathbb{P}'$
null set and therefore $b(\omega)=\tilde{b}(\Lambda'(\omega))$, $A(\omega)=\tilde{A}(\Lambda'(\omega))$ and
$\nu(\omega)=\tilde{\nu}(\Lambda'(\omega))$. If $\tilde{b}(\Lambda')$, $\tilde{A}(\Lambda'(\omega))$ and $\tilde{\nu}(\Lambda'(\omega))$ are
$\pi_A^{-1}(\mathcal{F}^A_t)$ measurable
(usually they are only $\hat{\mathcal{F}}^c_t$ measurable) they are then equal to $b,a$ and $\nu$ up to a null set with respect to
 $\pi_{A*}(\mathbb{P})=\pi_{A*}(\mathbb{P}')$.\\
Obviously if $(b,A,\nu)$ uniquely  identifies the law of $Z$ in $\Omega_A$, the condition stated in the theorem is also sufficient.${}\hfill$
\hfill\end{proof}

\subsection{An example of gauge symmetries for discrete semimartingales}

Although Theorem \ref{theorem_characteristic2} is very useful for proving that a group \emph{is not} a gauge symmetry group for a given semimartingale, in general it does not suffice by itself to delineate a method for determining the gauge symmetry group of a given semimartingale. However, in the following we attract the reader's attention to some still general example, although in discrete time, where this determination is indeed possible.\\
We consider the case of a stochastic process with discrete time on Lie groups $N$. In this case it is possible to use Theorem \ref{theorem_characteristic2} for obtaining a complete characterization of gauge invariant semimartingales. In order to illustrate the idea we limit ourselves to a detailed treatment only for the case where $N=\mathbb{R}^n$ (with its natural Lie group multiplication given by the sum of vectors) and the gauge group is $O(n)$ with its natural action $\Xi_B=B \cdot z$ on $\mathbb{R}^n$. The case of a general group $N$ can be treated in a similar way, but we prefer to skip the details in this paper. \\ 

\noindent Let $(Z_0=0,Z_1,...,Z_n,...)$ be a discrete time stochastic process taking values on $\mathbb{R}^n$. We can identify this process with a continuous time semimartingale $Z_t$, $t\in[0,+\infty)$, in the following way 
$$Z_t=Z_n \text{ if }n-1\leq t<n, \ n\in\mathbb{N} .$$
The stochastic characteristics of the semimartingale $Z$ are thus of the form
$$b_t=0, \ \ \ \ A_t=0, \ \ \ \ \nu(dz,dt)=\sum_{n \in \mathbb{N}} \delta_n(dt) \mu_n(Z_0,Z_1,...,Z_{n-1},dz),$$
where $z\in\mathbb{R}^n$ and $\mu_n(Z_0,Z_1,...,Z_{n-1},dz)$ is the law of the jump $\Delta Z_n=Z_n -Z_{n-1}$ conditioned with respect to the random variables $(Z_0,Z_1,...,Z_{n-1})$.\\ 
In order to simplify the treatment of the problem we suppose that the random measures $\mu_n$ are absolutely continuous with respect to the Lebesgue measure $dz$, this means that there exists a sequence of functions $F_n\in L^1(\mathbb{R}^n,dz)$ such that 
\begin{equation}\label{equation_F}
\mu_n(Z_0,Z_1,...,Z_{n-1},dz)=F_n(\Delta Z_1,...,\Delta Z_{n-1},z) dz.
\end{equation}
Let $B_t \in O(n)$ be a predictable process with respect to the $\sigma$-algebra generated by $Z$. This means that $B$ can be identified by with a discrete time process of the form $B_1:=B_1(Z_0)$, $B_2:=B_2(Z_0,Z_1)$, ...,$B_n:=B_n(Z_0,...,Z_{n-1})$.\\

\noindent The transformed semimartingale $dZ'_t=\Xi_{B_t}(dZ_t)$ and the transformation map $\Lambda$ in this case is given by
$$Z'_n(\omega)=\Lambda(\omega)_n=\sum_{i \leq n}B_i(Z_0(\omega),...,Z_{i-1}(\omega))\cdot (Z_i(\omega)-Z_{i-1}(\omega))$$
and $Z'_0=0$. It is important to note that the expression $\Lambda_n$ depends only on $Z_0,...,Z_n$ and is linear with respect to $Z_n$. This means that, in this case, the map $\Lambda$ is invertible and we can compute explicitly its inverse that is given by 
$$Z_n(\omega)=\Lambda'_n(\omega)=\sum_{i \leq n}\tilde{B}_i(Z'_0(\omega),...,Z'_{i-1}(\omega))\cdot (Z'_i(\omega)-Z'_{i-1}(\omega)),$$
where 
$$\tilde{B}_i(Z'_0(\omega),...,Z'_{i-1}(\omega)):=B_i^{-1}(\Lambda'_0(\omega),...,\Lambda'_{i-1}(\omega)).$$ 
This is obtained replacing recursively the random variable $Z_0,...,Z_{i-1}$ by their expression in terms of the $\Lambda'_0,...,\Lambda'_{i-1}$. It is important to note that in this case the particular structure of the discrete time process of $Z$ and of its predictable $\sigma$-algebra come to help.\\

\noindent The previous analysis implies that $\Delta Z_n=\tilde{B}_n(Z'_0,...,Z'_{n-1}) \cdot \Delta Z'_n$. So applying Theorem \ref{theorem_characteristic2} to this case we get that $Z$ is invariant with respect to random rotations if and only if 
\begin{eqnarray*}
&\nu(dz,dt)=\Xi_{B_n}(\nu(dz,dt)) \circ \Lambda' = &\\
&=\sum_n \delta_n(dt) F_n(\tilde{B}^{-1}_1(Z'_0) \cdot \Delta Z'_1,...,\tilde{B}^{-1}(Z_0,Z_1,...,Z_{n-2})\cdot \Delta Z'_{n-1}, \tilde{B}_n(Z'_0,...,Z'_{n-1}) \cdot z) dz,&
\end{eqnarray*}
where the $F_n$ are given by equation \refeqn{equation_F}. From the previous equation it follows that 
$$F_n(\tilde{B}^{-1}_1(Z'_0) \cdot \Delta Z'_1,...,\tilde{B}^{-1}(Z'_0,Z'_1,...,Z'_{n-2}) \cdot \Delta Z'_{n-1}, \tilde{B}_n(Z'_0,...,Z'_{n-1}) \cdot z)=F_n(\Delta Z'_1,...,\Delta Z'_{n-1},z),$$
for any measurable function $\tilde{B}_i:\mathbb{R}^{n\times i}\rightarrow O(n)$. Since the functions $\tilde{B}_i$ are arbitrary it follows that $Z$ is invariant with respect to random rotations if and only if $F_n$ depends only on the moduli $|\Delta Z_i|$, for $i=1,...,n-1$, and $|z|$, i.e. there exists a positive measurable function $G_n:\mathbb{R}^n_+ \rightarrow \mathbb{R}_+$ such that  
$$F_n(\Delta Z_1,...,\Delta Z_{n-1},z)=G_n(|\Delta Z_1|,...,|\Delta Z_{n-1}|,|z|),$$
where $|\cdot|$ is the usual Euclidean norm in $\mathbb{R}^n$. In other words the stochastic characteristics of $Z$ is of the form
$$\nu(dz,dt)=\sum_{n \in \mathbb{N}} \delta_n(dt) G_n(|Z_0|,|Z_1|,...,|Z_{n-1}|,|z|)dz.$$

\subsection{Gauge symmetries of L\'evy processes}\label{subsection_gauge_Levy}

Generalizing \cite[Chapter II Definition 4.1 and Theorem 4.15]{Jacod2003} we introduce the following definition.

\begin{definition}\label{definition_Levy}
A c\acc{a}dl\acc{a}g semimartingale $Z$ on a Lie group $N$ is an \emph{independent increments process} if its characteristics $(b,A,\nu)$ are deterministic.\\
The process $Z$ is a L\'evy process if $b_t=b_0t,A_t=A_0 t,\nu(dt,dx)=\nu_0(dx)dt$ for some $b_0 \in \mathbb{R}^n$, $A_0$ $n \times n$ symmetric
 positive semidefinite matrix and some $\sigma$-finite measure $\nu_0$ on $N$ such that $\int_N{(h^{\alpha}(z))^2\nu_0(dz)}<+\infty$ and
$\int_N{f(z)\nu_0(dz)}<+\infty $ for any smooth and bounded function $f \in \cinf(N)$ which is identically zero in a neighbourhood of $1_N$.
\end{definition}

Definition \ref{definition_Levy} is equivalent to the concept with the same name proposed by Feinsilver \cite{Feinsilver1978} (more recently studied by Liao \cite{Liao2014}) with the further request that the process $Z$ is a semimartingale. The definition of independent increments process depends on the filtration $\mathcal{F}_t$ used for defining the characteristics $(b,A,\nu)$ and, since $(b,A,\nu)$ are deterministic, the filtration $\mathcal{F}_t$
has to be a generalized natural filtration.

\begin{remark}
The characteristics of a L\'evy process introduced in Definition \ref{definition_Levy} are the same as those discussed in Subsection \ref{subsubsection_Levy}.
\end{remark}

\begin{theorem}\label{theorem_characteristic3}
If a semimartingale $Z$ is an independent increments process such that its law is uniquely determined by its characteristics, and if $\mathcal{G}$ is a metrizable second countable topological group, then $Z$ admits
$\mathcal{G}$ as  gauge symmetry group  with action $\Xi_g$ if and only if, for any $g \in \mathcal{G}$, \normalsize
\begin{equation}
\label{equation_characteristic5}
b^{\alpha}_t=\Gamma^{\alpha}_{g,\beta}b_t^{\beta}+\frac{1}{2}O^{\alpha}_{g,\beta\gamma}A^{\beta\gamma}
+\int_0^t{\int_N{(h^{\alpha}(z')-h^{\beta}(\Xi_{g^{-1}}(z'))\Gamma^{\alpha}_{g,\beta})\nu(ds,dz')}}
\end{equation}
\begin{eqnarray}
A^{\alpha\beta}_t&=&\Gamma^{\alpha}_{g,\gamma}\Gamma^{\beta}_{g,\delta}A^{\gamma\delta}_t\label{equation_characteristic6}\\
\nu&=&\Xi_{g*}(\nu).\label{equation_characteristic7}
\end{eqnarray}
\normalsize
\end{theorem}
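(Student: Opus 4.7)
The plan is to specialize Theorem \ref{theorem_characteristic2} to the independent increments setting. Since by hypothesis the triplet $(b,A,\nu)$ is deterministic, the arguments $b(\pi_A(\Lambda'(\omega)))$, $A(\pi_A(\Lambda'(\omega)))$ and $\nu(\pi_A(\Lambda'(\omega)),\cdot,\cdot)$ appearing in \refeqn{equation_characteristic8}--\refeqn{equation_characteristic10} collapse to $b,A,\nu$, and the only remaining $\omega$-dependence in those equations comes through $g(\omega_B)=G_t(\omega)$. Moreover, the uniqueness of the law from its characteristics is exactly the setting where the necessary condition of Theorem \ref{theorem_characteristic2} is also sufficient, so it suffices to match the random identities \refeqn{equation_characteristic8}--\refeqn{equation_characteristic10} with the deterministic ones \refeqn{equation_characteristic5}--\refeqn{equation_characteristic7}.

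For the necessity direction, I would fix an arbitrary $g\in\mathcal{G}$ and apply the assumed gauge invariance to the constant predictable process $G_t\equiv g$, which is trivially locally bounded and adapted to any generalized natural filtration. The specialization of \refeqn{equation_characteristic8}--\refeqn{equation_characteristic10} to this constant $g$ and to a deterministic triplet $(b,A,\nu)$ is exactly \refeqn{equation_characteristic5}--\refeqn{equation_characteristic7}, so the three equalities follow at once.

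For the sufficiency direction, assume \refeqn{equation_characteristic5}--\refeqn{equation_characteristic7} hold for every $g\in\mathcal{G}$ and let $G_t$ be an arbitrary $\mathcal{G}$-valued predictable locally bounded process. The metrizable second countable hypothesis on $\mathcal{G}$ guarantees that the continuous (indeed smooth in the $N$-argument) maps $g\mapsto\Gamma_g$, $g\mapsto O_g$ and $g\mapsto\Xi_{g^{-1}}$ are Borel, so that after composition with $G$ they yield predictable processes and the pointwise substitution $g=G_t(\omega)$ in the hypothesized identities produces \refeqn{equation_characteristic8}--\refeqn{equation_characteristic10} up to a $\mathbb{P}'$-null set. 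Combined with the uniqueness of the law from its characteristics, Theorem \ref{theorem_characteristic2} then delivers gauge invariance of $Z$. The only delicate step is this measurability/predictability check underpinning the pointwise substitution, and it is precisely where the topological regularity of $\mathcal{G}$ enters; apart from it, the proof is a direct reading of Theorem \ref{theorem_characteristic2} and Lemma \ref{lemma_characteristic1} under the deterministic-characteristics hypothesis.
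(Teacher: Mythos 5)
Your overall architecture coincides with the paper's: both directions are obtained by specializing Theorem \ref{theorem_characteristic2}, with necessity coming from constant processes $G_t\equiv g$ and sufficiency from extending the fixed-$g$ identities to arbitrary predictable locally bounded $G$. Two points deserve attention. First, in the necessity direction the paper does not merely plug in the constant process: it verifies the standing hypothesis of Theorem \ref{theorem_characteristic2}, namely that the completed canonical filtration is a generalized natural filtration for the \emph{transformed} process $d\tilde{Z}_t=\Xi_{g_0}(dZ_t)$ as well, using that $\Xi_{g_0}$ is a diffeomorphism so that the natural filtrations of $Z$ and $\tilde Z$ coincide. You should include this check; without it the necessary condition of Theorem \ref{theorem_characteristic2} cannot be invoked.

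Second, and more substantively, your sufficiency argument diverges from the paper's and has a gap at its central step. The paper first deduces \refeqn{equation_characteristic8}--\refeqn{equation_characteristic10} for \emph{elementary} (piecewise constant) processes $G_t$, then uses the density of elementary processes in the space of locally bounded predictable processes (this is exactly where metrizability and second countability of $\mathcal{G}$ enter, via \cite[Lemma 3.2.6]{Krylov1980}) together with dominated convergence for It\^o integrals to pass to general $G$. You instead claim that one can substitute $g=G_t(\omega)$ pointwise into \refeqn{equation_characteristic5}--\refeqn{equation_characteristic7}, and you attribute the role of the topological hypothesis to Borel measurability of $g\mapsto\Gamma_g$, $O_g$, $\Xi_{g^{-1}}$. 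That is not where the difficulty lies: continuity already gives Borel measurability. The real obstruction is that \refeqn{equation_characteristic5}--\refeqn{equation_characteristic7} are integrated identities, so their differential versions (equality of densities with respect to a reference measure $dC_t$) hold only outside a $dt$-null set $N_g$ \emph{depending on} $g$; an arbitrary predictable $G$ takes uncountably many values, so the naive substitution could in principle hit the exceptional sets on a set of positive measure. To repair this one must either produce a common null set — take a countable dense subset of $\mathcal{G}$ (second countability) and use continuity in $g$ of $\Gamma_g$, $O_g$ and of $g\mapsto\int_N(h^{\alpha}(z')-h^{\beta}(\Xi_{g^{-1}}(z'))\Gamma^{\alpha}_{g,\beta})\,\nu(\cdot,dz')$ and $g \mapsto \Xi_{g*}(\nu)$ — or fall back on the paper's elementary-process approximation. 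Either repair works, but as written the ``pointwise substitution'' is asserted rather than proved, and the stated reason for requiring $\mathcal{G}$ metrizable and second countable does not support it.
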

\begin{proof}
Let us consider the constant process $G_t=g_0$ for some $g_0 \in \mathcal{G}$. Since $\Xi_{g_0}$ is a diffeomorphism and since the constant
process $G_t=g_0$ is measurable with respect to both the natural filtrations of $Z_t$ and of $\tilde{Z}_t$, it is simple to prove that, if
$\tilde{\mathcal{F}}^c_t$ is a generalized natural
 filtration for $Z_t$, then it is a generalized natural filtration also for $d\tilde{Z}_t=\Xi_{g_0}(dZ_t)$. This fact implies that $\hat{\mathcal{F}}^c_t$ is a generalized natural filtration
for $\omega_A(t)$ with respect to the law $\mathbb{P}'$ (where $\hat{\mathcal{F}}^c_t, \omega_A(t), \mathbb{P}'$ were defined in Section \ref{subsection_gauge_main}).
For this reason, and since  $(b,A,\nu)$ and the process $G_t$
  do not depend on $\omega$, \refeqn{equation_characteristic5},
\refeqn{equation_characteristic6} and \refeqn{equation_characteristic7} follow from the necessary condition in Theorem \ref{theorem_characteristic1}.  \\
Conversely, if equations \refeqn{equation_characteristic5}, \refeqn{equation_characteristic6} and \refeqn{equation_characteristic7} hold, they
imply equations \refeqn{equation_characteristic8}, \refeqn{equation_characteristic9} and \refeqn{equation_characteristic10} for any elementary
process $G_t$. Using the density of elementary processes in $\mathfrak{G}$ (the space of locally bounded predictable processes) when $\mathcal{G}$ is metrizable and second countable (see for example \cite[Lemma 3.2.6]{Krylov1980}) and exploiting the dominated convergence theorem for It\^o stochastic integration (see for example \cite[Chapter IV Theorem 32]{Protter1990}) we can extend \refeqn{equation_characteristic8}, \refeqn{equation_characteristic9}
and \refeqn{equation_characteristic10} to the case of any locally bounded predictable process $G_t$.\\
Since the law of $Z$ is uniquely determined by its characteristics, the thesis follows  by the sufficient condition in Theorem
\ref{theorem_characteristic2}.
\hfill\end{proof}

\noindent When $\mathcal{G} \subset Aut(N)$ is the Lie group of the smooth automorphisms of $N$, and $\Xi_g$ is the natural action of $Aut(N)$ on $N$ the stochastic invariance is equivalent to the deterministic one.

\begin{corollary}\label{corollary_Levy}
If $\mathcal{G} \subset Aut(N)$ and $\Xi_g$ is the natural action of $Aut(N)$, an independent increments process $Z$ admits the gauge symmetry group $\mathcal{G}$ with respect to any generalized natural filtration $\mathcal{F}_t$ if and only if, for any $g \in \mathcal{G}$, $Z'_t=\Xi_g(Z_t)$ has the same law of $Z$.
\end{corollary}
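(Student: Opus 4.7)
The proof has two directions, and the core idea is that for processes whose law is determined by their characteristics, random invariance reduces to deterministic invariance. The easy direction (necessity) is essentially immediate from Proposition \ref{proposition_automorphism}: taking the constant predictable process $G_t \equiv g_0 \in \mathcal{G}$, the solution to $d\tilde{Z}_t = \Xi_{g_0}(dZ_t)$ coincides with $\Xi_{g_0}(Z_t) = Z'_t$, and the gauge symmetry hypothesis directly gives that $Z'$ and $Z$ share the same law.

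For sufficiency, the plan is to reduce to Theorem \ref{theorem_characteristic3}. First I would note that since $\mathcal{G} \subset Aut(N)$ is a sub-Lie-group of the finite dimensional Lie group $Aut(N)$, it is automatically metrizable and second countable. Next I would invoke the classical fact that, for an independent increments semimartingale with deterministic characteristics, the triplet $(b,A,\nu)$ uniquely determines the law (see, e.g., Jacod--Shiryaev, Chapter II). Thus both standing hypotheses of Theorem \ref{theorem_characteristic3} are in force, and it suffices to verify the deterministic identities \refeqn{equation_characteristic5}, \refeqn{equation_characteristic6}, \refeqn{equation_characteristic7} for each fixed $g \in \mathcal{G}$.

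To verify these identities, fix $g \in \mathcal{G}$ and consider $\tilde{Z}$ defined by $d\tilde{Z}_t = \Xi_g(dZ_t)$. By Proposition \ref{proposition_automorphism}, $\tilde{Z} = \Xi_g(Z)$, which by hypothesis has the same law as $Z$; therefore $\tilde{Z}$ and $Z$ share the same deterministic characteristics. On the other hand, Lemma \ref{lemma_characteristic1} applied with the constant process $G_t \equiv g$ computes the characteristics of $\tilde{Z}$ explicitly through precisely the right-hand sides of \refeqn{equation_characteristic5}, \refeqn{equation_characteristic6}, \refeqn{equation_characteristic7}. Equating the two expressions yields exactly these identities, and Theorem \ref{theorem_characteristic3} then delivers the gauge symmetry of $Z$ with respect to any generalized natural filtration.

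The main obstacle, and the reason the corollary does not follow merely by rephrasing the definition of gauge symmetry, is the passage from a random predictable $G_t$ to a deterministic constant $g$. That passage is precisely what Theorem \ref{theorem_characteristic3} already handles, through a density argument of elementary processes in the space of locally bounded predictable $\mathcal{G}$-valued processes combined with dominated convergence for It\^o integrals. Consequently, the genuinely new content in the corollary is only the bookkeeping that identifies $\Xi_g(Z)$ with the random-transformed process $\tilde{Z}$ (via Proposition \ref{proposition_automorphism}) and the verification that the standing hypotheses of Theorem \ref{theorem_characteristic3}---topological regularity of $\mathcal{G}$ and unique determination of the law by the characteristics---are automatic in this setting.
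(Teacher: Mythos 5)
Your proof is correct and follows essentially the same route as the paper, whose proof is a one-line citation of Proposition \ref{proposition_automorphism}, Lemma \ref{lemma_characteristic1} and Theorem \ref{theorem_characteristic3}: you use the proposition to identify $\Xi_g(Z)$ with the solution of $d\tilde{Z}_t=\Xi_g(dZ_t)$, the lemma to compute the characteristics of the transformed process, and the theorem to pass from the resulting deterministic identities to the full gauge symmetry. The only point worth flagging is that the unique determination of the law by the (deterministic) characteristics is an explicit hypothesis in Theorem \ref{theorem_characteristic3} and is only tacit in the corollary; your appeal to the classical fact for independent increments semimartingales is consistent with how the paper implicitly uses it.
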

\begin{proof}
The corollary is an easy consequence of Lemma \ref{lemma_characteristic1},  Theorem \ref{theorem_characteristic3} and Proposition \ref{proposition_automorphism}.
\hfill\end{proof}\\

\noindent Corollary \ref{corollary_Levy} shows that gauge symmetries provide a non-trivial generalization of the concept of deterministic invariance of L\'evy processes. In particular rotation-invariant L\'evy processes on $N=\mathbb{R}^n$ are also invariant with respect to random rotations. Indeed, when $\mathcal{G}=O(n)$ and $\Xi_B(z)=B \cdot z$, we have that $d\tilde{Z}_t=\Xi_{B_t}(dZ_t)$ satisfies
$$Z'^{\alpha}_t=\int_0^t{B^{\alpha}_{\beta,s} dZ^{\beta}_s}.$$
Examples of such processes are L\'evy processes with generator of the form
$$\begin{array}{rcl}
L(f)(z)&=&\sum_{\alpha=1}^n\frac{D}{2}\partial_{z^{\alpha}z^{\alpha}}(f)(z)+\\
&&+\int_{N}{(f(z+z')-f(z)-I_{|z'|<1}(z')z^{\alpha}\partial_{z^{\alpha}}(f))F(|z'|)dz'},\end{array}$$ \normalsize where $D \in \mathbb{R}_+$,
$|\cdot |$ is the standard norm on $\mathbb{R}^n$ and $F:\mathbb{R}_+  \rightarrow \mathbb{R}_+$ is a measurable locally bounded function such
that $\int_1^{\infty}{F(r)r^{n-1}dr} < + \infty$ and $\int_0^1{F(r)r^{n+1}} < +\infty$ and in particular Brownian motion (for $D=1$ and $F=0$) and $\alpha$-stable processes (for $D=0$ and $F(|z|)=\frac{1}{|z|^{n+\alpha}}$).

\subsection{Examples of gauge symmetric non-Markovian semimartingales}\label{subsection_non_markovian}

For general non-Markovian semimartingales, where $(b,A,\nu)$ depends explicitly on $\omega$, we cannot simplify Theorem \ref{theorem_characteristic2} as in the case of a L\'evy process. Nevertheless there is a special case where it is possible to use a strategy similar to the one proposed in Theorem \ref{theorem_characteristic3}.\\
\begin{theorem}\label{theorem_characteristic4}
Let $N=N_1 \times N_2$ (where $N_1,N_2$ are two Lie groups) be a Lie group with multiplication defined by
$$(z_1,z_2) \cdot (z'_1,z'_2)= (z_1 \cdot_1 z'_1, z_2 \cdot_2 z'_2),$$
where   $\cdot_1,\cdot_2$ denote the multiplication on
$N_1,N_2$, respectively. If we take the space $\Omega_A=\Omega_A^1 \times \Omega_A^2$, where $\Omega_A^i=\mathcal{D}_{1_{N_i}}([0, + \infty),N_i)$,
and we denote by $\omega_A^1,\omega^2_A$ the elements of $\Omega_A^1,\Omega_A^2$, respectively, we can consider $\Xi_g=(\Xi^1_g,id_{N_2})$. If the characteristics of a semimartingale $Z$ in $N$ depend only on $\omega_A^2$ and $Z$ admits the Lie group $\mathcal{G}$ with action $\Xi_g$ as a gauge symmetry group then, for any $g \in \mathcal{G}$,
 \normalsize

\begin{equation}\label{equation_characteristic11}
\begin{array}{rcl}
b^{\alpha}_t(\omega^2_A)&=&\Gamma^{\alpha}_{g,\beta}b_t^{\beta}(\omega^2_A)+\frac{1}{2}O^{\alpha}_{g,\beta\gamma}A^{\beta\gamma}(\omega^2_A)+\\
&&+\int_0^t{\int_N{(h^{\alpha}(z')-h^{\beta}(\Xi_{g^{-1}}(z'))\Gamma^{\alpha}_{g,\beta})\nu(\omega^2_A,ds,dz')}}\end{array}
\end{equation}
\begin{eqnarray}
A^{\alpha\beta}_t(\omega^2_A)&=&\Gamma^{\alpha}_{g,\gamma}\Gamma^{\beta}_{g,\delta}A^{\gamma\delta}_t(\omega^2_A)\label{equation_characteristic12}\\
\nu(\omega^2_A,dt,dz)&=&\Xi_{g*}(\nu(\omega^2_A,dt,dz)).\label{equation_characteristic13}
\end{eqnarray}
\normalsize Moreover, if the triplet $(b,A,\nu)$ uniquely determines the law of $Z$ and the group $\mathcal{G}$ is metrizable and second countable, then equations \refeqn{equation_characteristic11},
\refeqn{equation_characteristic12} and \refeqn{equation_characteristic13} ensure that $Z$ admits $\mathcal{G}$ as gauge symmetry group.
\end{theorem}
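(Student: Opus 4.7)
The plan is to derive the deterministic identities \refeqn{equation_characteristic11}--\refeqn{equation_characteristic13} by specializing the general necessary condition of Theorem \ref{theorem_characteristic2} to the factorized setting $N=N_1\times N_2$, and to recover the sufficient direction via a density argument paralleling that in the proof of Theorem \ref{theorem_characteristic3}. Throughout, the key structural remark is that the action $\Xi_g=(\Xi^1_g,\mathrm{id}_{N_2})$ leaves the second component of $Z$ pathwise invariant.

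For necessity, I would choose the deterministic constant process $G_t\equiv g_0\in\mathcal{G}$, for which the generalized natural filtration hypothesis is automatically preserved by $\Lambda$ (as in the proof of Theorem \ref{theorem_characteristic3}), since $\Xi_{g_0}$ is a diffeomorphism. Since in the sense of Definition \ref{definition_solution} we have $d\tilde{Z}^2_t=dZ^2_t$, the canonical second coordinate is fixed by the transformation, i.e.\ $\pi^2_A(\Lambda(\omega))=\omega^2_A$, and hence also $\pi^2_A(\Lambda'(\omega))=\omega^2_A$ up to $\mathbb{P}'$-null sets. Because by hypothesis $(b,A,\nu)$ depend only on $\omega^2_A$, this yields $b(\pi_A(\Lambda'(\omega)))=b(\omega)$ and the analogous equalities for $A$ and $\nu$. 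Substituting these into \refeqn{equation_characteristic8}--\refeqn{equation_characteristic10} collapses the randomness of the right-hand sides and reproduces \refeqn{equation_characteristic11}--\refeqn{equation_characteristic13} with $g=g_0$. The arbitrariness of $g_0\in\mathcal{G}$ concludes this direction.

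For sufficiency, assume \refeqn{equation_characteristic11}--\refeqn{equation_characteristic13} hold for every $g\in\mathcal{G}$. I would first verify the random identities \refeqn{equation_characteristic8}--\refeqn{equation_characteristic10} for elementary step processes $G_t=\sum_i g_i\, I_{(\tau_{i-1},\tau_i]}(t)$: on each interval the process is constant, so the identities reduce, using once more $\pi^2_A(\Lambda'(\omega))=\omega^2_A$, to \refeqn{equation_characteristic11}--\refeqn{equation_characteristic13} at $g=g_i$. Exploiting the metrizability and second countability of $\mathcal{G}$, I would then appeal to the density of elementary processes in the space of locally bounded predictable processes (as in \cite[Lemma 3.2.6]{Krylov1980}) and pass to the limit via the dominated convergence theorem for stochastic integrals \cite[Chapter IV Theorem 32]{Protter1990}, invoking the continuity of $g\mapsto \Gamma_g,O_g,\Xi_g$ to obtain convergence of each term. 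The assumption that $(b,A,\nu)$ uniquely determines the law of $Z$ finally permits application of the sufficient part of Theorem \ref{theorem_characteristic2}, yielding that $\tilde{Z}$ has the same law as $Z$ for every locally bounded predictable $G$.

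The principal delicate point lies in the random measure equation \refeqn{equation_characteristic13} within the sufficiency step: the identity must be interpreted weakly, by testing against predictable functions vanishing in a neighbourhood of $1_N$ as in the proof of Lemma \ref{lemma_characteristic1}, before the dominated convergence argument can be applied to transfer it from elementary to general $G$. A secondary but worthwhile verification is that the generalized natural filtration assumption required when invoking Theorem \ref{theorem_characteristic2} is preserved along the approximating step processes; this follows from the deterministic-$g_0$ reasoning applied on each constancy interval and then stitched together across the partition.
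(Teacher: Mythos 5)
Your proposal is correct and follows essentially the same route as the paper: the paper's proof likewise rests on the observation that $\Lambda'$ acts as the identity on the $\Omega_A^2$ (and $\Omega_B$) components, so that the $\omega^2_A$-dependent characteristics are unchanged under $\Lambda'$, derives necessity from Theorem \ref{theorem_characteristic2} with the constant process $G_t=g_0$, and obtains sufficiency by the same elementary-process density and dominated-convergence argument as in Theorem \ref{theorem_characteristic3}. Your write-up in fact supplies more detail than the paper's own terse proof, and the details you add are consistent with it.
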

\begin{proof}
The proof is based on Theorem \ref{theorem_characteristic2}
and on the fact that the map $\Lambda'$ appearing in  Theorem \ref{theorem_characteristic2}, in this situation has the form
$$\Lambda'=\left(\begin{array}{c}
\Lambda'^1_A\\
id_{\Omega_A^2}\\
id_{\Omega_B}\end{array} \right).$$
In particular, the necessary condition can be proved by considering  the constant process $G_t=g_0$ and applying Theorem \ref{theorem_characteristic2}, while
the proof of the sufficiency of equations
\refeqn{equation_characteristic11},
\refeqn{equation_characteristic12} and
\refeqn{equation_characteristic13} is  similar to
the proof of Theorem \ref{theorem_characteristic3}.
\hfill\end{proof}\\

\begin{remark}\label{remark_non_markovian}
Theorem \ref{theorem_characteristic4} in some way provides a simplification of Theorem \ref{theorem_characteristic2} when the action $\Xi_g$ has a special form but can also be exploited in a different way.\\
For example, let $Z$ be a semimartingale on a Lie group $N_1$ such that there exists a semimartingale $Z'$ on a Lie group $N_2$ with the property that the semimartingale $(Z,Z') \in N_1 \times N_2$ admits characteristics $(b,A,\nu)$ (with respect to their natural filtration) of the form required for Theorem \ref{theorem_characteristic4}. If the characteristic triplet $(b,A,\nu)$ uniquely determines the law of the process $(Z,Z')$ and the topological group $\mathcal{G}$ is locally metrizable and second countable, then $\mathcal{G}$ with action $\Xi_g$ is a gauge symmetry group for $Z$ with respect to its natural filtration if and only if, for any $g \in \mathcal{G}$, $(\Xi_g(dZ),dZ')$ has the same law as $(dZ,dZ')$. If $\mathcal{G} \subset Aut(N)$, by Proposition \ref{proposition_automorphism}, this is equivalent to the request that $(\Xi_g(Z),Z')$ has the same law of $(Z,Z')$.
\end{remark}

We apply Remark \ref{remark_non_markovian} to an explicit example, constructing an $\mathbb{R}^2$-semi-martingale admitting $O(2)$ with its natural action $\Xi_B(z)=B \cdot z$ as gauge symmetry group. Let $(W^1,W^2,W^0)$ be a three dimensional Brownian motion and consider
$$\tilde{W}^{\alpha}_t=\int_0^t{G(W^0_{[0,s]})dW^{\alpha}_s}.$$
The characteristics of the $\mathbb{R}^3$ semimartingale $(\tilde{W}^1,\tilde{W}^2,W^0)$ are
\normalsize
\begin{eqnarray*}
db_t&=&0\\
dA_t&=&\left(\begin{array}{ccc}
(G(W^0_{[0,t]},t))^2dt & 0 & 0 \\
0 & (G(W^0_{[0,t]},t))^2dt & 0 \\
0 & 0 & dt
\end{array} \right)\\
\nu&=&0,
\end{eqnarray*}
\normalsize
where the functions $h^{\alpha}$ can be chosen arbitrarily.\\
Since $A$ is invariant with respect to rotations in the first two components, $(B\cdot \tilde{W},W^0)$ has the same law as $(\tilde{W},W^0)$ for any $B \in O(2)$ (where $\tilde{W}=(\tilde{W}^1 ,\tilde{W}^2) \in \mathbb{R}^2$). By Remark \ref{remark_non_markovian} $(\tilde{W}^1,\tilde{W}^2)$ has $O(2)$ as a gauge symmetry group with respect to the natural filtration of $(\tilde{W}^1,\tilde{W}^2)$.

\begin{remark}
The previous example is the prototypical example of a continuous semimartingale on $\mathbb{R}^n$ admitting $O(n)$, with its natural action, as a gauge symmetry group. Indeed in \cite{DeVecchi2017(deFinetti)} it is proven, with few technical hypotheses on the characteristics $(b,A,\nu)$, that a semimartingale $Z$ with continuous paths, which admits $O(n)$ as a gauge symmetry group, is of the form $Z_t=\int_0^t{f_t dW_t}$, where $W_t$ is an $n$ dimensional Brownian motion and $f_t$ is a stochastic process  independent of $W$ and with $\int_0^t{f^2_sds} < + \infty$ almost surely.
\end{remark}

\section*{Acknowledgements}
The first author would like to thank the Department of Mathematics, Universit\`a degli Studi di Milano for the warm hospitality. This research was supported by IAM and HCM (University of Bonn) and Gruppo Nazionale Fisica Matematica (GNFM-INdAM). The second author is supported by the German Research Foundation (DFG) via CRC 1060. 

\bibliographystyle{plain}

\bibliography{levy_symmetries(8)}

\end{document}